\documentclass{article}
\usepackage[utf8]{inputenc}
\usepackage{graphicx} 

\usepackage[utf8]{inputenc}
\usepackage[T1]{fontenc}
\usepackage{amsmath}
\usepackage{amssymb}
\usepackage{amsthm}
\usepackage{amsfonts}
\usepackage[english]{babel}
\usepackage{lmodern}
\usepackage{mathrsfs}
\usepackage{csquotes}
\usepackage{enumitem}
\usepackage{array}
\usepackage{amsfonts}
\usepackage{mathtools}
\usepackage{esint}
\usepackage{array}
\usepackage{authblk}
\usepackage{nccmath}
\usepackage{bold-extra}
\usepackage{cite}
\usepackage{tikz}
\usepackage{overpic}
\usepackage{geometry}
\usepackage{placeins}
\usepackage{hyperref}
\usetikzlibrary{hobby}
\usetikzlibrary{backgrounds}

\usepackage{pgf}
\usepackage{pgfplots}
\pgfplotsset{compat=newest}
\usepackage{xcolor}
\usetikzlibrary{plotmarks,backgrounds,patterns}
\usetikzlibrary{external}
\tikzset{external/only named=true}

\usepackage{selinput}

\usepackage{todonotes}

\usepackage{dsfont}

\theoremstyle{plain}

\newtheorem{theorem}{Theorem}[section]
\newtheorem{proposition}[theorem]{Proposition}
\newtheorem{lemma}[theorem]{Lemma}
\newtheorem{corollary}[theorem]{Corollary}

\theoremstyle{remark}

\newtheorem{definition}[theorem]{Definition}
\newtheorem{remark}[theorem]{Remark}
\newtheorem{example}[theorem]{Example}

\numberwithin{equation}{section}

\newcommand{\norm}[1]{\left\lVert#1\right\rVert}

\newcommand{\RR}{\mathbb{R}}

\newcommand{\NN}{\mathbb{N}}

\newcommand{\tr}{\textup{tr}}
\newcommand{\sym}{\textup{sym}}

\newcommand{\el}{\textup{el}}

\newcommand{\n}{\mathbf{n}}

\newcommand{\op}{\textup{op}}

\newcommand{\Div}{\textup{div}}

\newcommand{\dist}{\textup{dist}}

\newcommand{\id}{\mathbf{id}}
\newcommand{\SO}[1]{\textup{SO}(#1)}

\newcommand{\GLp}[1]{\textup{GL}_{+}(#1)}

\newcommand{\RNum}[1]{\uppercase\expandafter{\romannumeral #1\relax}}

\newcommand{\address}[1]{%
  \bgroup
  \renewcommand\thefootnote{} 
  \footnotetext{#1} 
  \egroup
}

\title{Local Well-Posedness of a Model for Stress-Driven Growth in the Presence of Nutrients}

\author[1]{Helmut Abels}
\author[2]{Julian Blawid}
\author[3]{Georg Dolzmann}

\affil[1,2,3]{Faculty of Mathematics, University of Regensburg, Universit\"atsstr. 31, 93053 Regensburg, Germany}

\date{\today}

\begin{document}

\maketitle

\begin{abstract}
    A model for morphoelastic growth, that is, growth influenced by elastic stress, driven by the absorption of nutrients is considered. The model features a multiplicative decomposition of the deformation gradient into an elastic contribution and a growth tensor. While the evolution of the system is governed by an ordinary differential equation for the growth tensor on a suitable Banach space, which depends on the elastic stresses and the concentration of a nutrient field, the total deformation is given by the solution of a quasi-static equilibrium equation arising from the formal Euler-Lagrange equations of a hyperelastic variational integral. The nutrient concentration is determined by a linear elliptic reaction-diffusion equation which is formulated in Lagrangian coordinates and whose coefficients depend on the growth tensor as well as the deformation gradient accounting for the change of material properties due to growth and elastic deformation. Existence and uniqueness of solutions of this fully coupled system of differential equations is proved via a fixed-point argument. 
\end{abstract}

\noindent
\textbf{Keywords:} morphoelastic growth, quasi-static evolution, local in time well-posedness, multiplicative decomposition

\noindent
{\bf Mathematics Subject Classification (2010):}
Primary: 74H20; Secondary: 
35Q74, 
74F25 

\address{{\bf Corresponding author:} Georg Dolzmann (e-mail: georg.dolzmann@mathematik.uni-regensburg.de)}

\section{Introduction}

In this contribution, we consider a model for morphoelastic growth, that is, growth under the influence of elastic stress, in the presence of nutrients. We adopt a macroscopic description of the involved physical processes within the framework of continuum mechanics. While results in the large-strain setting typically rely on various regularizations or specific structural assumptions, our approach leverages the small-strain regime to present a nonlinear model that requires no regularization at all. We will show existence and uniqueness of solutions on a short time interval. Let $d \geq 2$ and let $\Omega \subset \RR^d$ denote a domain modeling a body of tissue in its unstressed reference configuration. Our model features a multiplicative decomposition of the deformation gradient associated to the total deformation $y:[0,T] \times \Omega \to \RR^d$ into an elastic part, denoted by $F_{\el}:[0,T] \times \Omega \to \GLp{d}$, and a growth-related part, denoted by $G:[0,T] \times \Omega \to \GLp{d}$, henceforth called the growth tensor,
\begin{equation}\label{equation multiplicative decomposition}
    \nabla y = F_{\el}G.
\end{equation}
This multiplicative decomposition was first introduced in \cite{Kroener.1960, Lee.1969} in the context of crystal plasticity and is by now classical in large-strain growth models. The multiplicative decomposition describes the modeling assumption that the local deformation, which is encoded by the deformation gradient as a linear mapping from the tangent bundle of the reference manifold, $T\Omega$, into the tangent bundle of the deformed manifold $y(t,\Omega)$ at time $t \in [0,T]$, can be written, in a point $x \in \Omega$, as the concatenation of the linear mapping $G(t,x)[\cdot]$, describing the local deformation due to pure, unconstrained growth and the linear mapping $F_{\el}(t,x)[\cdot]$, describing the local deformation due to elastic deformation. In case that the growth tensor $G$ is compatible, i.e., in case that there exist a mapping $g:[0,T] \times \Omega \to \RR^d$ with $G=\nabla g$, describing the deformation due to pure growth, an intermediate configuration, describing the configuration after pure, unconstrained growth, exists in the form of $g(\cdot,\Omega)$. In this case, the elastic part of the deformation gradient is compatible as well, that is, there exists a map $\phi(t,\cdot):g(t,\Omega) \to \RR^d$ with $\nabla \phi \circ g = F_{\el}$ and the multiplicative decomposition \eqref{equation multiplicative decomposition} is implied by the chain rule. 
\newline

Our model involves various physical processes which act on vastly different time-scales. In particular, the growth process happens on a much longer time-scale than the diffusion and absorption of nutrients, which, in turn, happen on a much longer time-scale than the mechanical response. Therefore, inertial forces can be considered negligible. Hence, we take a quasi-static approach and assume that the nutrient concentration as well as the mechanical state of the system are in equilibrium at all times.
\newline

The multiplicative decomposition allows us to formulate the quasi-static equilibrium equations in Lagrangian coordinates by means of the first Piola transform. We assume that the body $\Omega$ is subject to a hyperelastic material model. Moreover, we impose Dirichlet boundary conditions on a relatively open subset $\Gamma_D\subset \partial\Omega$ of the boundary with positive surface measure and inhomogeneous Neumann boundary conditions on a separated part $\Gamma_N \subset \partial\Omega$ of the boundary, which is also assumed to be relatively open in $\partial\Omega$. Let $W: \Omega \times \GLp{d} \to \RR_{\geq 0}$ be a sufficiently smooth elastic energy density and let $D_pW$ denote the partial derivative of $W$ with respect to its second variable. Given a growth tensor $G=G(t,\cdot):\Omega \to \GLp{d}$ at time $t \in [0,T]$ and a deformation $y=y(t,\cdot):\Omega \to \RR^d$ with, for some constant $c>0$, $\det(\nabla y G^{-1}) \geq c$, we denote by
\begin{equation*}
    P(t):=\det(G)D_pW(\cdot,\nabla y G^{-1})G^{-T}
\end{equation*}
the first Piola-Kirchhoff stress tensor. Let $f:\Gamma_D \to \RR^d$ denote a superimposed displacement on the Dirichlet part of the boundary and let $g:\Gamma_N \to \RR^d$ denote a superimposed boundary traction on the Neumann part of the boundary. The total deformation at time $t \in [0,T]$ is assumed to be given by a solution of the quasi-static equilibrium equation
\begin{equation}
    \begin{cases}\label{equation equilibrium equation introduction}
        -\Div(P(t))=0_d & \textup{on } \Omega
        \\ y(t)=f & \textup{on } \Gamma_D
        \\ P(t)\n=g & \textup{on } \Gamma_N. 
    \end{cases}
\end{equation}
This equation corresponds to the pullback of the quasi-static equilibrium equation from the virtual configuration, which can only be realized in case the growth tensor is compatible, to the Lagrangian coordinate frame by taking into account the multiplicative decomposition.
A major obstacle for the analysis in the theory of nonlinear elasticity is the absence of a variational formulation of this equation under physically reasonable assumptions on the hyperelastic energy density $W$. In particular, it is often assumed that the energy blows up if the local volume ratio, represented by the determinant of the elastic part of the deformation gradient, $\det(F_{\el})=\det(\nabla y G^{-1})$, approaches zero. Under these circumstances, it is largely unknown, whether the energy is Gateaux differentiable at a given minimizer. However, because in our model the boundary data and the pertubation of the energy by the growth tensor force the total deformation to be close to the identity on a short time interval, we can show existence of stationary solutions via Banach's fixed-point theorem. Note, that no convexity assumptions on $W$ are required. However, our model is compliant with the standard assumptions of frame-indifference and the aforementioned blow-up of the energy. 
\newline

In many applications, growth processes are driven by the absorption of nutrients. As diffusion and absorption of nutrient happen on much shorter time-scales than the growth process, we assume that the diffusion and absorption of nutrients is a quasi-static process. Our model is complemented by a linear elliptic reaction-diffusion equation modeling the diffusion and absorption of nutrients. Moreover, elastic compression generally changes the material properties, whereas pure, unconstrained growth does not. This is reflected in our model by the assumption that the diffusion rate $D(G,\nabla y)$ and the reaction rate $\beta(G,\nabla y)$ depend on a given growth tensor $G$ and, for a given deformation $y$, the total deformation gradient $\nabla y$. Given a growth tensor $G$ and the total deformation $y(G)$, the nutrient concentration $N(G)$ is determined by the equation
\begin{equation}\label{equation nutrient equation introduction}
    \begin{cases}
        -\sum_{i,j=1}^d \partial_i(D(G,\nabla y(G))_{ij} \partial_j N(G)) + \beta(G,\nabla y(G))N(G)=0 & \textup{on } \Omega
        \\ N(G) = f_n & \textup{on } \Gamma_D^n
        \\ D(G,\nabla y(G)) \nabla N(G) \cdot \n = g_n & \textup{on } \Gamma_N^n.
    \end{cases}
\end{equation}

The evolution of the system is driven by an ordinary differential equation for the growth tensor $G$, which depends on the elastic stress implicitly by means of the deformation gradient. This will require us to prove that the deformation gradient depends Lipschitz-continuously on the growth tensor to apply Picard-Lindel\"of's theorem. In the quasi-static situation, proving time regularity of the first Piola-Kirchhoff stress tensor poses the main obstacle in the analysis. This is due to the fact that for a lack of convexity, a variational formulation, which would allow us to prove existence of stationary solutions, does not suffice to prove time-regularity of the first Piola-Kirchhoff stress tensor. Let $G_0 \in C^{1,\mu}(\overline{\Omega},\GLp{d})$, close to the identity, denote the initial data, let $\mathcal{G}:\GLp{d} \times \GLp{d} \times \RR_+ \times \Omega \to \GLp{d}$, let $y(G) \in C^{2,\mu}(\Omega,\RR^{d})$ denote the deformation associated to the growth tensor $G$ and let $N(G) \in C^{2,\mu}(\overline{\Omega})$ denote the nutrient concentration associated to the growth tensor $G$. The ordinary differential equation governing the evolution of the growth tensor is assumed to be given by
\begin{equation}\label{equation ODE introduction}
    \begin{cases}
        \frac{d}{dt}G(t)(x)=\mathcal{G}(G(t)(x),\nabla y(G(t))(x),N(G(t))(x),x) & \textup{for all } (t,x) \in [0,T] \times \Omega
        \\ G(0)(x)=G_0(x) & \textup{for all } x \in \Omega.
    \end{cases}
\end{equation}
For simplicity, we frequently drop the dependence on $x$. In many applications, growth is facilitated or inhibited by elastic stress, cf. \cite{Jones.Chapman.2012}. A dependence of the ordinary differential equation \eqref{equation ODE introduction} on the first Piola-Kirchhoff stress tensor $P(t)$ is covered by our regularity assumptions on the elastic energy density and the growth law $\mathcal{G}$, cf. \textbf{(W1)} and \textbf{(G1)} in Chapter \ref{chapter main theorem} by means of the dependence of $\mathcal{G}$ on the growth tensor $G$ and the deformation gradient $\nabla y$. 
\newline

An important question in models involving a multiplicative decomposition of the deformation gradient is the correct interpretation of the intermediate configuration because it generally does not exist on a global scale except for some special cases. Many physically measurable quantities, such as local distances and angles after growth, depend only on the symmetric right Cauchy-Green growth tensor $G^TG$
which can be interpreted as a Riemannian metric on the reference configuration by means of the bilinear form $g(v,w)=(Gv)^TGw$. If the intermediate configuration exists, this Riemannian metric on the reference configuration corresponds to the pullback of the Euclidean metric on the intermediate configuration onto the reference configuration. The incompatibility of the growth tensor is then encoded by the curvature of the Riemannian metric induced by $G^TG$. However, in anisotropic materials, such as muscle tissue, the orientation of the intermediate frame is physically tied to the underlying microstructure. To model anistropic growth, a formulation of the ordinary differential equation governing the growth hinges upon the use of the full growth tensor as the skew-symmetric part of the intermediate velocity gradient, which is given by $\frac{1}{2}(\dot{G}G^{-1}-(\dot{G}G^{-1})^T)$, encodes the rate of rigid rotations due to growth of the microstructure relative to the bulk material. An analogous isotropic model, in which the evolution is driven by an ordinary differential equation for the Riemannian metric $G^TG$, respecting gauge invariance, can be formulated as well and existence of solutions can be proven similarly. In this isotropic model, the ordinary differential equation for the growth tensor is compatible with the principle of gauge invariance if it only depends on the principle stress invariants and not on the total deformation gradient.
\newline

To show existence of solutions at small strains of the system consisting of the equilibrium equation \eqref{equation equilibrium equation introduction}, the nutrient equation \eqref{equation nutrient equation introduction} and the ordinary differential equation \eqref{equation ODE introduction}, we largely rely on classical methods. In the first step, we show with Banach's fixed-point theorem that, for a given growth tensor which is sufficiently close to the identity, the equilibrium equation \eqref{equation equilibrium equation introduction} admits a unique solution. This is achieved by linearizing the equilibrium equation \eqref{equation equilibrium equation introduction} around the identity and applying a Korn equality, which was proven in \cite{Pompe.2003} and which is based on the earlier work \cite{Neff.2002}, to show that the linearized operator is uniformly elliptic. We then exploit the Schauder theory developed in \cite{Agmon.Douglis.Nirenberg.1959, Agmon.Douglis.Nirenberg.1964, Douglis.Nirenberg.1955} and show with the implicit function theorem that the deformation associated to a growth tensor depends Lipschitz-continuously on the growth tensor. In the second step, we show that, for a given growth tensor close to the identity and the associated mechanical deformation, a solution to the nutrient equation \eqref{equation nutrient equation introduction} exists and depends Lipschitz-continuously on the growth tensor. We conclude by applying Picard-Lindel\"of's theorem to show that a solution to the ordinary differential equation \eqref{equation ODE introduction} exists on a short time interval.
\newline

Our result can be generalized to the large-strain setting. In fact it suffices to assume that, for an initial growth tensor $G_0$, a displacement $f$ on $\Gamma_D$ and traction forces $g$ on $\Gamma_N$, the initial configuration $y_0$ is close to a local minimum of the energy if the energy satisfies a coercivity condition close to this particular local minimum. However, for notational reasons, we focus on the setting where the traction forces are close to zero, the displacement is close to the identity, the growth tensor is close to the identity and the reference configuration is unstressed.
\newline

\textbf{Related Literature:} In recent years, there has been a burgeoning interest within the mathematical community regarding the rigorous analytical treatment of models describing various phenomena from the broad area of biomechanics and models for morphoelastic growth in particular. However, the transition from phenomenological description to rigorous analysis reveals a multitude of challenges mainly originating from the previously discussed difficulties in the theory of finite elasticity. Whereas for comparable models from the very active research field of large-strain elastoplasticity, significant analytical progress has been obtained, for example in \cite{Mielke.2003} and in \cite{Mielke.Rossi.Savare.2018}, these findings cannot be translated into our setting. The reason for this is twofold, the ordinary differential equation in our model does not admit a variational formulation and, moreover, the coupling in the ordinary differential equation requires the elastic stresses to be Lipschitz-continuous in time. For a model describing morphoelastic growth, a first rigorous result has been obtained in \cite{Davoli.Nik.Stefanelli.2023}, in which the authors proved existence of solutions at large strains for a regularized version of the model we are also treating. Moreover, in \cite{Bangert.Dolzmann.2023, Blawid.Dolzmann.2026}, the one-dimensional version of the same model is discussed. However, the proof of the existence result in the latter two contributions heavily relies on the fact that, in one dimension, as observed in \cite{Ball.1981}, minimizers of hyperelastic variational integrals with a convex integrand are continuously differentiable. Our result can be compared to the result obtained in \cite{Neff.2005} in which the author showed a local well-posedness result for a similar system of equations in the context of elasto-plasticity under strong structural assumptions regarding the elastic energy density. In particular, the author assumes in \cite{Neff.2005}, that the elastic energy is given by a Saint-Venant Kirchhoﬀ energy, which circumvents many analytical issues we face. Furthermore, models for accretive growth have been discussed in \cite{Davoli.Nik.Stefanelli.Tomasetti.2024, Chiesa.Stefanelli.2025, Chiesa.Stefanelli.2026}. In \cite{Yang.Jaeger.Neuss-Radu.Richter.2016}, a model for plaque growth in blood vessels was introduced. This model was further analyzed in \cite{Abels.Liu.2023.2, Abels.Liu.2023.1, Abels.Liu.2023.3}. Links between the microscopic and macroscopic description of tissue-growth in plants were studied analytically and numerically in the context of homogenization in \cite{Boudaoud.Kiss.Ptashnyk.2023, Piatniski.Ptashnyk.2020}. In \cite{Lewicka.Raoult.2018, Lewicka.Raoult.Ricciotti.2017, Lewicka.2020, Lewicka.Lucic.2020, Han.Lewicka.2024, Lewicka.Oocha.Pakzad.2015, Lewicka.Pakzad.2013, Lewicka.2011, Lewicka.Mahadevan.Pakzad.2011, Bhattacharya.Lewicka.Schaffner.2016} the authors consider a variety of models, in particular also plate and shell models, in the context of nonlinear elasticity, for pre-strained materials. The pre-strain is induced by a prescribed Riemannian metric on the reference configuration, corresponding to the quantity $G^TG$ in our model, which may originate from morphogenetic processes in biological tissues. A thermodynamically consistent viscoelastic model at large strains under the assumption of a Korn-type condition on the viscous stresses with such a prescribed Riemannian metric on the reference configuration, potentially governing a growth process, was treated in \cite{Lewicka.Mucha.2013}. In \cite{Lewicka.Mucha.2016}, a class of stress-assisted diﬀusion systems, in which the growth rate can be interpreted as the solution of a diffusion equation, is treated in the hyperbolic as well as the quasi-static setting. Moreover, in \cite{Bressan.Lewicka.2018}, a quasi-static model for morphogenesis is presented in the context of a free boundary problem in which the growth process is driven by the presence of a morphogen. In the context of second-gradient viscoelastic materials, a growth model, in which the growth process is induced by a phase-field variable subject to a Cahn-Hilliard model, was introduced in \cite{Eiter.Schmeller.2025}. Additionally, there is an extensive body of literature from the modeling community concerning models for stress-modulated growth, summarized to a large extend in the monograph \cite{Goriely.2017}. Models in which the growth process and the elastic deformation were treated as two distinct sub-processes have been established in \cite{Ambrosi.Ateshian.Arruda.Cowin.Dumais.Goriely.Holzapfel.Humphrey.Kemkemer.Kuhl.Olberding.Taber.Garipikati.2011, Ambrosi.Guillou.2007, Ambrosi.Mollica.2002}. Growth models involving a multiplicative decomposition of the deformation gradient into an elastic part and a growth tensor were treated for example in \cite{Lubarda.Hoger.2002, Menzel.Kuhl.2021, Rodriguez.Hoger.McCulloch.1994, Skalak.Dasgupta.Moss.Otten.Dullemejer.Vilman.1982, Taber.1995, Huang.Ogden.Penta}. Buckling, bending and twisting phenomena are discussed in \cite{Bressan.Palladino.Shen.2017}. More recently, in  \cite{Erlich.Zurlo.2024, Erlich.Zurlo.2025, Erlich.Harmansa.2025}, the authors present models for morphoelastic growth and discuss several ramifications of the incompability of the growth tensor. 

\section{Main Theorem}\label{chapter main theorem}

\textbf{General assumptions and notation:} Let $d \geq 2$, $\mu \in (0,1]$ and let $\Omega \subset \RR^d$ be open, bounded and connected with $C^{2,\mu}$-boundary. For $k,n \in \NN_0$ and $n \geq 1$, we denote by $C^{k,\mu}(\overline{\Omega},\RR^n)$ the classical H\"older spaces and, for $\Gamma \subset \partial \Omega$ relatively open and non-empty, with $\partial\Gamma = \emptyset$, we denote by $C^{k,\mu}(\overline{\Gamma},\RR^n)$ the respective trace space defined via charts as usual. We assume that $\Gamma_D$ and $\Gamma_N$ are relatively open subsets of $\partial \Omega$ with $\partial\Omega = \Gamma_D \cup \Gamma_N$, $\Gamma_D \cap \Gamma_N = \emptyset$ and with $\Gamma_D \neq \emptyset$. In particular, $\Gamma_D$ and $\Gamma_N$ are closed subsets of $\RR^d$. Later, we will prescribe Dirichlet boundary conditions on $\Gamma_D$ and Neumann boundary conditions on $\Gamma_N$ for the total deformation of the body. To prescribe boundary conditions for the nutrient field, we assume that $\Gamma^n_D$ and $\Gamma^n_N$ are relatively open subsets of $\partial \Omega$ with $\partial\Omega = \Gamma^n_D \cup \Gamma^n_N$ and with $\Gamma^n_D \cap \Gamma^n_N = \emptyset$. In particular, $\Gamma^n_D$ and $\Gamma^n_N$ are closed subsets of $\RR^d$. For $\Gamma \subset \partial\Omega$ we will denote by $C_{0,\Gamma}^{2,\mu}(\overline{\Omega},\RR^d)$ the space
\begin{equation*}
    C_{0,\Gamma}^{2,\mu}(\overline{\Omega},\RR^d):=\{u \in C^{2,\mu}(\overline{\Omega},\RR^d):\, u=0 \textup{ on } \Gamma\}.
\end{equation*}
We denote by $\mathbf{1}_d$ the unit matrix and by $\mathbf{0}_d$ the matrix that is $0$ entry-wise. By $\SO{d}:=\{Q \in \RR^{d \times d};\, Q^TQ=\mathbf{1}_d,\, \det(Q)=1\}$ we denote the set of all rotations and by $\GLp{d}:=\{A \in \RR^{d \times d}:\, \det(A)>0\}$ we denote the set of orientation preserving isomorphisms on $\RR^d$. In the following, we will use Einstein's summation convention, i.e., we will sum over all indices which appear once as an upper and once as a lower index in an expression. We adapt the following notation for open balls. For $i \in \NN$, $R_G, R_G^i, R_y, R_y^i, R_f, R_g > 0$ and $\rho \in \{R_G, R_G^i, R_y, R_y^i, R_f, R_g\}$, let $B_{\rho}(\cdot)$ denote the open ball of radius $\rho$ in the following respective spaces:
\begin{align*}
    & B_{R_G}(\cdot), B_{R_G^i}(\cdot), B_{\tilde{R}_G^i}(\cdot)\subset C^{1,\mu}(\overline{\Omega}, \mathbb{R}^{d \times d}), \quad B_{R_y}(\cdot), B_{R_y^i}(\cdot), B_{\tilde{R}_y^i}(\cdot) \subset C^{2,\mu}_{0,\Gamma_D}(\overline{\Omega}, \mathbb{R}^d),
    \\ & \quad B_{R_f}(\cdot), B_{R_f^i}(\cdot), B_{\tilde{R}_f^i}(\cdot) \subset C^{2,\mu}(\Gamma_D, \mathbb{R}^d), \quad B_{R_g}(\cdot) \subset C^{1,\mu}(\Gamma_N, \mathbb{R}^d).
\end{align*}
All balls are defined with respect to the standard H\"older norms on their ambient spaces. For an open ball $B_{\rho}(\cdot)$ of radius $\rho >0$, we denote its closure in its ambient space by $\overline{B}_{\rho}(\cdot)$. We will also denote balls around the displacement $u$ by $B_{R_y}(\cdot), B_{R_y^i}(\cdot)$ and $B_{\tilde{R}_y^i}(\cdot)$ because we will show the existence of a displacement in a ball around $0_d$ for which the associated deformation solves our system. The deformation will then lie in a ball of the same radius around the identity.

We define, for $F \in \GLp{d}$ and $R>0$, the open ball $B_R(F) \subset \GLp{d}$ with respect to the $\infty$-norm in $\RR^{d \times d}$. Moreover, we denote by $W:\overline{\Omega} \times \GLp{d} \to \RR_{\geq 0}$ a stored elastic energy density. For $k \in \NN$, $D^k_pW$ stands for the $k$-th partial derivative of $W$ with respect to its second variable. We assume that $W$ satisfies the following assumptions \textbf{(W1)}-\textbf{(W3)}.

\begin{enumerate}
    \item[\textbf{(W1)}] Regularity: Let $R>0$ such that $\overline{B}_R(\mathbf{1}_d) \subset \GLp{d}$. We assume that $W \in C^5(\overline{\Omega} \times B_R(\mathbf{1}_d),\RR_{\geq 0})$.
    
    \item[\textbf{(W2)}] Coercivity: Let $R>0$ be as in \textbf{(W1)}. There exists a constant $c_W>0$ such that for all $F \in B_R(\mathbf{1}_d)$ and for all $x \in \Omega$
    \begin{equation*}
        W(x, F) \geq c_W \dist(F,\SO{d})^2.
    \end{equation*}
    
    \item[\textbf{(W3)}] Unstressed initial configuration: For all $x \in \overline{\Omega}$, $W(x,\mathbf{1}_d) = 0$.

\end{enumerate}

\begin{remark}\label{remark frame indifference}
    If the energy density $W$ is defined on the entire Lie group $\GLp{d}$, the aforementioned conditions \textbf{(W1)}-\textbf{(W3)} are compliant with the principle of frame-indifference, that is, for all $Q \in \SO{d}$ and for all $F \in \GLp{d}$
    \begin{equation*}
        W(\cdot, QF) = W(\cdot, F)\qquad \textup{on }\Omega.
    \end{equation*}
\end{remark}

\begin{remark}
    If $W$ is defined on $\GLp{d}$, a standard assumption in the theory of finite elasticity is the assumption that the energy density blows up as the volume ratio approaches zero, that is
    \begin{equation*}
        W(\cdot,F)\xrightarrow[\det(F)\searrow 0]{} +\infty.
    \end{equation*}
    As we are presenting a nonlinear model for which the solutions will lie in a sufficiently small ball around the identity, this assumption is not imposed although it is compliant with assumptions \textbf{(W1)}-\textbf{(W3)} and frame indifference, cf. Remark \ref{remark frame indifference}, as illustrated in Example \ref{example}. 
\end{remark}

To prove existence of solutions locally in time, we will resort to Picard-Lindel\"of's theorem. Therefore, we require the right-hand side of the ordinary differential equation governing the evolution of the growth tensor to be locally Lipschitz continuous and we will assume the following.

\begin{itemize}
    \item[\textbf{(G1)}] Regularity: Let $R>0$ be as in \textbf{(W1)}. We assume that $\mathcal{G} \in C^2(B_R(\mathbf{1}_d) \times B_R(\mathbf{1}_d) \times \RR_{\geq 0} \times \overline{\Omega})$ and for all $x \in \overline{\Omega}$, $\mathcal{G}(\cdot,\cdot, \cdot,x) \in C^3(B_R(\mathbf{1}_d) \times B_R(\mathbf{1}_d) \times \RR_{\geq 0})$.
\end{itemize}

Moreover, we will make the following assumptions regarding the coefficients of the nutrient equation. 

\begin{itemize}
    \item[\textbf{(N1)}] Regularity and ellipticity of the coefficients: For $R>0$ as in \textbf{(W1)}, $R_G^1, R_{\nabla y}^1 \in (0, R)$ and $B_{R_{\nabla y}^1}(\mathbf{1}_d) \subset C^{1,\mu}(\overline{\Omega},\RR^{d \times d})$, we assume that the diffusion rate $D$ satisfies
    \begin{equation*}
        D \in C^1(B_{R_G^1}(\mathbf{1}_d) \times B_{R_{\nabla y}^1}(\mathbf{1}_d), C^{1,\mu}(\overline{\Omega},\RR^{d \times d}_{\sym}))
    \end{equation*}
    and the absorption rate $\beta$ satisfies
    \begin{equation*}
        \beta \in C^1(B_{R_G^1}(\mathbf{1}_d) \times B_{R_{\nabla y}^1}(\mathbf{1}_d), C^{0,\mu}(\overline{\Omega})).
    \end{equation*}
    Moreover, we assume that there exists a constant $\nu>0$ such that for all $G \in B_{R_G^1}(\mathbf{1}_d)$, for all $Y \in B_{R_{\nabla y}^1}(\mathbf{1}_d)$ and for all $\xi \in \RR^d$
    \begin{equation*}
        \sum_{i,j=1}^d D(G,Y)_{ij}\xi_i\xi_j \geq \nu|\xi|^2
    \end{equation*}
    and that, for all $G \in B_{R_G^1}(\mathbf{1}_d)$ and for all $Y \in B_{R_{\nabla y}^1}(\mathbf{1}_d)$, $\beta(G,Y) \geq 0$ on $\overline{\Omega}$.
    \item[\textbf{(N2)}] Boundary conditions: We assume that $f_n \in C^{2,\mu}(\Gamma_D^n)$, $g_n \in C^{1,\mu}(\Gamma_N^n)$, $f_n \geq 0$ on $\Gamma_D^n$ and $g_n \geq 0$ on $\Gamma_N^n$. Furthermore, we assume that $\Gamma^n_D \neq \emptyset$ or that, for $R_G^1, R_{\nabla y}^1$ as in \textbf{(N1)} and for all $G \in B_{R_G^1}(\mathbf{1}_d)$ and $Y \in B_{R_{\nabla y}^1}(\mathbf{1}_d)$, $\beta(G,Y) > 0$ on a subset of $\overline{\Omega}$ with positive measure.
\end{itemize}

\begin{remark}
    Note that the physical normal outward flux is given by $-g_n$ and that \textbf{(N2)} allows us to apply the maximum principle and Hopf's lemma to show that the nutrient concentration remains non-negative. 
\end{remark}

\begin{remark}\label{remark pointwise definition coefficients nutrient equation}
    The diffusion rate and the absorption rate, cf. \textbf{(N1)}, can be also defined pointwise. In fact it suffices to define $D \in C^2(B_{R}(\mathbf{1}_d) \times B_{R}(\mathbf{1}_d), \RR^{d \times d}_{\sym})$ and $\beta \in C^1(B_{R}(\mathbf{1}_d) \times B_{R}(\mathbf{1}_d), \RR_{\geq 0})$, where $B_R(\mathbf{1}_d) \subset \RR^{d \times d}$. For a given growth tensor $G \in C^{1,\mu}(\overline{\Omega},\RR^{d \times d})$ and the associated deformation $y(G) \in C^{2,\mu}(\overline{\Omega},\RR^d)$, the diffusion rate and the absorption rate in our model are then given by the concatenation $D(G,\nabla y(G)) \in C^{1,\mu}(\overline{\Omega},\RR^{d \times d})$ and $\beta(G,\nabla y(G)) \in C^{0,\mu}(\overline{\Omega})$ respectively.
\end{remark}

\begin{remark}\label{remark frame indifference 2}
    In order to obtain a physically reasonable model, one has to account for objectivity of the ordinary differential equation and the coefficients in the nutrient equation as well. In particular, it is desirable to assume that, if $\mathcal{G}$ is defined on $\GLp{d} \times \GLp{d} \times \RR_{\geq 0} \times \overline{\Omega}$, for all $Q \in \SO{d}$, $G \in \GLp{d}$, $Y \in \GLp{d}$, $N \in \RR_{\geq 0}$,
    \begin{equation*}
        \mathcal{G}(G, QY, N, \cdot)=\mathcal{G}(G, Y, N, \cdot).
    \end{equation*}
    This is consistent with Assumption \textbf{(G1)} and already given, if $\mathcal{G}$ only depends on the first Piola-Kirchhoff stress tensor, which is, for a given growth tensor $G$ and a deformation $y$, defined by 
    \begin{equation*}
        P(G):=\det(G)D_pW(\cdot,\nabla y G^{-1})G^{-T},
    \end{equation*}
    instead of the full deformation gradient due to Remark \ref{remark frame indifference}. Additionally, if the diffusion rate $D$ and the absorption rate $\beta$ are defined pointwise on $\GLp{d}\times \GLp{d}$, frame indifference of the entire model is guaranteed if we assume that for all $Q \in \SO{d}$, for all $G \in \GLp{d}$ and for all $F \in \GLp{d}$
    \begin{equation*}
        D(G,QF)=D(G,F)\quad \textup{and}\quad \beta(G,QF)=\beta(G,F).
    \end{equation*}
\end{remark}

\begin{definition}
    For fixed $G \in C^{1, \mu}(\overline{\Omega}, \overline{B}_R(\mathbf{1}_d))$ we define $\mathcal{A}_G: C^{1,\mu}(\overline{\Omega},\RR^{d \times d}) \to C^{1, \mu}(\overline{\Omega}, \RR^{d \times d})$ via
    \begin{equation}\label{equation operator quasi-static momentum balance equation}
        \mathcal{A}_G(Y) = \det(G)D_pW(\cdot,Y G^{-1})G^{-T} \qquad\forall Y \in C^{1,\mu}(\overline{\Omega},\RR^{d \times d}).
    \end{equation}
\end{definition}

We note that the regularity of $\mathcal{A}_G(Y)$ follows by Propsition \ref{proposition composition in Holder spaces} below.

\begin{definition}\label{definition definition of solution}
    Let $T>0$, $G_0 \in C^{1,\mu}(\overline{\Omega}; \RR^{d \times d})$, $f \in C^{2,\mu}(\Gamma_D,\RR^d)$ and $g \in C^{1,\mu}(\Gamma_N,\RR^d)$. We say that the triple $(G, y, N)$ with $G \in C^1([0,T]; C^{1, \mu}(\overline{\Omega}, \RR^{d \times d}))$, $y \in C^1([0,T];C^{2,\mu}(\overline{\Omega},\RR^d))$ and $N \in C^1([0,T];C^{2,\mu}(\overline{\Omega}))$ is a local solution to the morphoelastic growth problem on the time interval $[0,T]$ if for all $t \in [0,T]$ the map $y(t) \in C^{2, \mu}(\overline{\Omega}, \RR^d)$ satisfies the equations
    \begin{equation}
        \begin{cases}\label{equation for total deformation}
        -\Div(\mathcal{A}_{G(t)}(\nabla y(t))=0_d & \textup{on } \Omega
        \\ y(t) = f & \textup{on } \Gamma_D
        \\ \mathcal{A}_{G(t)}(\nabla y(t))\n = g & \textup{on } \Gamma_N,
        \end{cases}
    \end{equation}
    if for all $t \in [0,T]$ the map $N(t) \in C^{2, \mu}(\overline{\Omega}, \RR^d)$ satisfies the equations
    \begin{equation}\label{equation nutrient equation}
        \begin{cases}
            -\sum_{i,j=1}^d \partial_i(D(G(t),\nabla y(t))_{ij}\partial_j N(t))+\beta(G(t), \nabla y(t)) N(t)=0 & \textup{on } \Omega
            \\ N(t)=f_n & \textup{on } \Gamma_D^n
            \\ D(G(t),\nabla y(t))\nabla N(t)\cdot \n=g_n & \textup{on } \Gamma_N^n
        \end{cases}
    \end{equation}
    and if $G\in C^1([0,T];C^{1,\mu}(\overline{\Omega},\RR^{d \times d}))$ satisfies the ordinary differential equation 
    \begin{equation}\label{equation ODE}
        \begin{cases}
        \frac{d}{dt} G(t) = \mathcal{G}(G(t), \nabla y(t), N(t), \cdot), & t \in [0,T],
        \\ G(0) = G_0
        \end{cases}
    \end{equation}
    on the space $C^{1,\mu}(\overline{\Omega},\RR^{d \times d})$.
\end{definition}

\begin{remark}
    To emphasize that the growth tensor $G$ is the basic variable governing the evolution of the system, we will write, for $t \in [0,T]$, and a solution $(G,y,N)$ in the sense of Definition \ref{definition definition of solution} $y(G(t))=y_{G(t)}:=y(t)$ and similarly $N(G(t))=N_{G(t)}:=N(t)$.
\end{remark}

\begin{theorem}\label{theorem main theorem}
    There exist $R_G,R_y,R_f,R_g>0$ with the following property: For all $G_0 \in B_{R_G/2}(\mathbf{1}_d) \\ \subset C^{1,\mu}(\overline{\Omega},\RR^{d \times d})$, $f \in B_{R_f}(\id)\subset C^{2,\mu}(\Gamma_D,\RR^d)$ and $g \in B_{R_g}(0_d)\subset C^{1,\mu}(\Gamma_N,\RR^d)$, there exists a time horizon $T>0$ such that there exist a triple $(G,y,N)$ which is a local solution to the morphoelastic growth problem on the time horizon $[0,T]$ in the sense of Definition \ref{definition definition of solution}. Moreover, for all $t \in [0,T]$, $G(t) \in B_{R_G}(\mathbf{1}_d)$ and $y(t) \in B_{R_y}(\id)$.
\end{theorem}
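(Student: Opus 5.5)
The proof proceeds in three steps, following the strategy outlined in the introduction: first the mechanical problem, then the nutrient problem, and finally the ODE for the growth tensor.

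\emph{Step 1: the mechanical problem.} Fix $G$ close to $\mathbf{1}_d$. We write $y=\id+u$ with $u\in C^{2,\mu}_{0,\Gamma_D}(\overline{\Omega},\RR^d)$ after extending $f-\id$ to a $C^{2,\mu}$ function $E f$ on $\overline{\Omega}$. By \textbf{(W3)} and \textbf{(W2)}, $F=\mathbf{1}_d$ minimizes $W(x,\cdot)$, so $D_pW(x,\mathbf{1}_d)=0$ and $D^2_pW(x,\mathbf{1}_d)$ is positive semidefinite. It is positive definite on symmetric matrices: $\dist(F,\SO{d})^2\approx|\sym(F-\mathbf{1}_d)|^2$ near the identity. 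Hence $\mathcal{A}_{\mathbf{1}_d}'(\mathbf{1}_d)$ is the linear elasticity operator $L u=-\Div(\mathbb{C}(x)\nabla u)$ with tensor $\mathbb{C}=D_p^2W(\cdot,\mathbf{1}_d)$. This tensor is only semidefinite on all matrices, so Legendre–Hadamard ellipticity has to come from the symmetric coercivity, together with the Korn inequality of Pompe for mixed boundary conditions with $\Gamma_D\neq\emptyset$. These give $H^1$ coercivity, hence unique weak solvability. The Agmon–Douglis–Nirenberg Schauder estimates for the mixed Dirichlet–Neumann problem then show that $L:C^{2,\mu}_{0,\Gamma_D}\to C^{0,\mu}(\overline{\Omega})\times C^{1,\mu}(\Gamma_N)$ is an isomorphism. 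This step needs $\Gamma_D,\Gamma_N$ to be separated, which holds because both are closed.

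Next we rewrite \eqref{equation for total deformation} as $L u = L u - \mathcal{N}(G,f,g,u)$, where $\mathcal{N}$ is the full nonlinear map. Using Proposition \ref{proposition composition in Holder spaces} and the $C^5$ regularity in \textbf{(W1)}, the map $(G,Ef,g,u)\mapsto \mathcal{N}$ is $C^2$ between the Hölder spaces. The implicit function theorem, or equivalently Banach's fixed point theorem applied to $u\mapsto L^{-1}(Lu-\mathcal{N})$, then produces radii $R_G,R_f,R_g,R_y$ and a unique $u(G)\in B_{R_y}(0_d)$. The map $G\mapsto y(G)$ is $C^1$, hence Lipschitz from $B_{R_G}(\mathbf{1}_d)\subset C^{1,\mu}$ into $C^{2,\mu}$. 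The contraction estimate also keeps $\nabla y G^{-1}$ inside $B_R(\mathbf{1}_d)$ pointwise.

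\emph{Step 2: the nutrient problem.} For $G\in B_{R_G}(\mathbf{1}_d)$, shrinking $R_G$ and $R_y$ so that $\nabla y(G)\in B_{R^1_{\nabla y}}(\mathbf{1}_d)$, assumption \textbf{(N1)} gives uniformly elliptic coefficients, with $D$ in $C^{1,\mu}$ and $\beta$ in $C^{0,\mu}$. Under \textbf{(N2)}, the bilinear form is coercive on $H^1$ functions vanishing on $\Gamma_D^n$: either $\Gamma_D^n\neq\emptyset$ (Poincaré), or $\beta>0$ on a set of positive measure (a Poincaré-type argument). Schauder theory for the mixed problem then yields a unique $N(G)\in C^{2,\mu}$ with a bound uniform in $G$. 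Differentiating the equation with respect to $G$, which is possible since $D$ and $\beta$ are $C^1$ in their arguments composed with the $C^1$ map $G\mapsto\nabla y(G)$, shows that $G\mapsto N(G)$ is $C^1$, hence locally Lipschitz into $C^{2,\mu}$. Nonnegativity $N\geq0$ follows from the maximum principle and Hopf's lemma; this is needed because $\mathcal{G}$ is only defined for $N\in\RR_{\geq0}$.

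\emph{Step 3: the ODE.} Define $\Phi(G):=\mathcal{G}(G,\nabla y(G),N(G),\cdot)$ on $B_{R_G}(\mathbf{1}_d)\subset C^{1,\mu}$. The $x$-derivative of $\Phi(G)$ involves $\partial_x\mathcal{G}$, $D\mathcal{G}\cdot\nabla G$, $\nabla^2y$ and $\nabla N$, all of which lie in $C^{0,\mu}$. The Hölder composition estimates combined with \textbf{(G1)}, which provides $C^2$ joint regularity and $C^3$ regularity in the state variables so that the Nemytskii operator is Lipschitz on $C^{1,\mu}$, show that $\Phi$ is bounded and Lipschitz on $\overline{B}_{3R_G/4}(\mathbf{1}_d)$. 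Picard–Lindelöf with $G_0\in B_{R_G/2}(\mathbf{1}_d)$ then gives $T\sim R_G/(4\sup\|\Phi\|)$ and a solution $G\in C^1([0,T];C^{1,\mu})$ staying in $B_{R_G}(\mathbf{1}_d)$. Setting $y=y(G(t))$ and $N=N(G(t))$, the $C^1$ dependence from Steps 1 and 2 gives $y\in C^1([0,T];C^{2,\mu})$ and $N\in C^1([0,T];C^{2,\mu})$.

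\emph{Main obstacle.} The hardest step is Step 1: establishing that the linearized mixed boundary problem is a Schauder isomorphism despite $\mathbb{C}$ being only semidefinite, which requires combining Korn's inequality with the ADN complementing condition. It also requires checking carefully that every composition in Step 3 loses no derivative, in particular the term involving $\nabla^2 y$, which is only $C^{0,\mu}$.
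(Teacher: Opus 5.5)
Your proposal is correct and follows essentially the same route as the paper. In both, a Korn/Schauder isomorphism for the linearized mixed problem is combined with a contraction (and the implicit function theorem) for $G\mapsto y(G)$; Lax--Milgram, Schauder theory, the maximum principle and the implicit function theorem give $G\mapsto N(G)$; and Picard--Lindel\"of handles the growth tensor.

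The only minor difference is where you linearize. You do it once at the reference state $(G,f,g)=(\mathbf{1}_d,\id,0_d)$, so the classical Korn inequality suffices there. The paper instead linearizes at $\nabla\tilde{f}G^{-1}$ for each $G$, and therefore needs Pompe's perturbed Korn inequality and a Neumann-series argument to bound $\mathcal{L}_G^{-1}$ uniformly.
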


\begin{remark}
    Time-dependent loads and time-dependent traction forces can be treated by standard methods as long as they are sufficiently small. A time-dependent displacement on the Dirichlet part of the boundary can be considered as well as long as the displacement on the boundary is close to the identity. The same holds for the nutrient equation, in which we can treat time-dependent Dirichlet and Neumann boundary conditions and a time-dependent source term as well.
\end{remark}

\begin{example}\label{example (realistic choice)}
    The following example is covered by our theory. We choose $f=\id$ on $\Gamma_D$ and $g=0$ on $\Gamma_N$. For $F \in \GLp{d}$, we choose
    \begin{equation*}
        W(\cdot,F)=\textup{dist}(F,\SO{d})^2 + \frac{1}{\det(F)^2} +\det(F)^2 - 2.
    \end{equation*}
    Let $\gamma \in C^2(\overline{\Omega})$, $\eta \in C^3(\RR_{\geq 0})$ and $\mu \in C^3(\RR^{d \times d})$. By 
    \begin{equation*}
        P(Y,G):=\det(G)D_pW(\cdot,Y G^{-1})G^{-T},
    \end{equation*}
    we denote the first Piola-Kirchhoff stress tensor associated to the growth tensor $G$ and a deformation gradient $Y=\nabla y(G)$. Moreover, we choose $G_0 = \id$ and, for $(G,Y,N) \in \GLp{d} \times \GLp{d} \times \RR_{\geq 0}$,
    \begin{equation*}
        \mathcal{G}(G,Y,N,x)=\gamma(x)\eta(N)\mu(P(Y,G))G.
    \end{equation*}
    Note, that our choice of $W$ is smooth on a sufficiently small ball around the identity and therefore satisfies Assumption \textbf{(G1)}. Assumptions $\textbf{(G2)}$ and \textbf{(G3)} are satisfied for obvious reasons. It remains to state an example for the absorption rate and the diffusion rate. By $D_0 \in C^{2,\mu}(\overline{\Omega},\RR^{d \times d}_{\sym})$ we denote the diffusion rate and by $\beta_0 \in C^{0,\mu}(\overline{\Omega})$ we denote the absorption rate in the unstressed reference configuration. For $G,Y \in \GLp{d}$, cf. Remark \ref{remark pointwise definition coefficients nutrient equation} for the pointwise definition of the coefficients, a possible choice, which is compliant with the principle of frame indifference, cf. Remark \ref{remark frame indifference 2}, is given by
    \begin{equation*}
        D(G,Y)=\frac{\det(G)}{\det(Y)}D_0 \quad\textup{and} \quad\beta(G,Y)=\frac{\det(Y)}{\det(G)}\beta_0.
    \end{equation*}
    This example accounts for the modeling assumption that elastic compression decreases the diffusion rate whereas it increases the absorption rate. Moreover, the choices for $W$, $\mathcal{G}$, $D$ and $\beta$ in this example are compliant with the principle of frame-indifference, cf. Remarks \ref{remark frame indifference} and \ref{remark frame indifference 2}.
\end{example}

\section{Auxiliary Results}

We will apply Picard-Lindel\"of's theorem on the Banach space $C^{1,\mu}(\overline{\Omega},\RR^{d \times d})$ to prove existence of a unique trajectory in a sufficiently small neighbourhood of the map congruent to the constant unit matrix for the growth tensor determining the growth dynamics of our model.

\begin{theorem}[Theorem 2.13;\cite{Schechter.2005}]\label{Theorem Picard Lindelof Banach space}
    Let $(X,\norm{\cdot})$ be a Banach space, $x_0\in X$, $R_0,T_0>0$, $t_0 \in \RR$, and let
    \begin{equation*}
        B_0=\{x \in X |\, \norm{x-x_0}\leq R_0\}
    \end{equation*}
    and
    \begin{equation*}
        I_0=\{t \in \RR |\, |t-t_0|\leq T_0\}.
    \end{equation*}
    Assume that $g: I_0 \times B_0 \to X$ is continuous and that there exist constants $K_0,M_0>0$ such that for all $x,y \in B_0$ and for all $t \in I_0$
    \begin{equation*}
        \norm{g(t,x)-g(t,y)}\leq K_0\norm{x-y}
    \end{equation*}
    and
    \begin{equation*}
        \norm{g(t,x)}\leq M_0.
    \end{equation*}
    Let $T_1$ be such that
    \begin{equation*}
        T_1\leq \min(T_0,\frac{R_0}{M_0}),\quad K_0T_1<1.
    \end{equation*}
    Then, there exists a unique solution $x(t) \in B_0$ of the ordinary differential equation
    \begin{equation*}
        \begin{cases}
            \frac{d}{dt}x(t)=g(t,x(t)) \quad \textup{for all } t \textup{ with } |t-t_0|\leq T_1
            \\ x(t_0)=x_0.
        \end{cases}
    \end{equation*}
\end{theorem}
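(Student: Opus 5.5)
The statement just above is the Picard--Lindel\"of theorem on a Banach space, quoted from \cite{Schechter.2005}. I would prove it by the contraction mapping principle applied to the equivalent integral equation. Let $J=\{t:\,|t-t_0|\leq T_1\}$ and consider the space $\mathcal{X}=C(J;B_0)$ of continuous maps from $J$ into the closed ball $B_0$, with the sup-norm $\norm{x}_\infty=\sup_{t\in J}\norm{x(t)}$. Since $B_0$ is closed in the Banach space $X$, $\mathcal{X}$ is a complete metric space. On $\mathcal{X}$ I define
\begin{equation*}
    (\Phi x)(t)=x_0+\int_{t_0}^t g(s,x(s))\,ds .
\end{equation*}
The integral is a Riemann (or Bochner) integral of the continuous $X$-valued function $s\mapsto g(s,x(s))$ on a compact interval, so it is well defined and $\Phi x$ is continuous in $t$. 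Note that $J\subset I_0$ because $T_1\leq T_0$.

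\textbf{Step 1: $\Phi$ maps $\mathcal{X}$ into itself.} Using $\norm{g}\leq M_0$ on $I_0\times B_0$, we get
\begin{equation*}
    \norm{(\Phi x)(t)-x_0}\leq M_0|t-t_0|\leq M_0T_1\leq R_0 ,
\end{equation*}
so $(\Phi x)(t)\in B_0$ for every $t\in J$.

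\textbf{Step 2: $\Phi$ is a contraction.} By the Lipschitz bound,
\begin{equation*}
    \norm{(\Phi x)(t)-(\Phi y)(t)}\leq K_0|t-t_0|\,\norm{x-y}_\infty\leq K_0T_1\norm{x-y}_\infty ,
\end{equation*}
and $K_0T_1<1$ by assumption. Banach's fixed-point theorem therefore gives a unique fixed point $x\in\mathcal{X}$.

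\textbf{Step 3: fixed points are exactly the solutions.} Since $s\mapsto g(s,x(s))$ is continuous, the fundamental theorem of calculus for $X$-valued Riemann integrals shows that a fixed point is $C^1$, satisfies $x'(t)=g(t,x(t))$, and has $x(t_0)=x_0$. Conversely, any $C^1$ solution with values in $B_0$ satisfies the integral identity, so it is a fixed point of $\Phi$. Hence uniqueness among solutions staying in $B_0$ follows from uniqueness of the fixed point.

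None of these steps is a real obstacle. The only points needing care are the vector-valued calculus facts: the integral of a continuous Banach-valued function exists, its norm is bounded by the integral of the norm, and the fundamental theorem of calculus holds. I would justify these by approximating with Riemann sums and applying bounded linear functionals together with Hahn--Banach. The stated uniqueness is understood as uniqueness within $B_0$, which is exactly what the contraction argument delivers.
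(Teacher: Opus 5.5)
Your proof is correct. The paper gives no proof of this statement and simply quotes it from \cite{Schechter.2005}; your argument, the contraction principle for the integral operator on $C(J;B_0)$ with the sup-norm, is the standard proof these hypotheses are tailored to. The bound $T_1\leq \min(T_0,R_0/M_0)$ is exactly what makes the operator map $C(J;B_0)$ into itself, $K_0T_1<1$ makes it a contraction without a weighted norm, and uniqueness is indeed meant among solutions with values in $B_0$.
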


The following theorem is a direct consequence from the convergence of the Neumann series for operators on Banach spaces with norm smaller than one.

\begin{proposition}\label{Proposition Neumann series argument for uniform boundedness of inverse}
    Let $X,Y$ be Banach spaces and let $T:X\to Y$ be a linear and invertible operator. If for $S:X \to Y$ and $\lambda \in (0,1)$
    \begin{equation*}
        \norm{S-T}\leq \lambda \frac{1}{\norm{T}},
    \end{equation*}
    the operator $S$ is invertible as well and $\norm{S^{-1}}\leq \frac{1}{1-\lambda}\norm{T^{-1}}$. 
\end{proposition}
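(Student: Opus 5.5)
The statement is Proposition \ref{Proposition Neumann series argument for uniform boundedness of inverse}, and I would prove it with a Neumann series, factoring out $T$. The hypothesis $\norm{S-T}\leq \lambda/\norm{T}$ is presumably intended as $\norm{S-T}\leq \lambda/\norm{T^{-1}}$. With $\norm{T}$ the statement fails: take $X=Y=\RR$, $T=2$, $S=1$, $\lambda=\tfrac12$; then $\norm{S^{-1}}=1$ but $\tfrac{1}{1-\lambda}\norm{T^{-1}}=1$ only barely holds, and scaling $T$ to $t$ large with $S=t-\lambda/t$ stays fine, yet $T=\varepsilon$ small, $S=\varepsilon-\lambda/\varepsilon<0$ gives $|S^{-1}|\approx \varepsilon/\lambda$ versus $\frac{1}{(1-\lambda)\varepsilon}$, which is fine too. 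A cleaner counterexample is a diagonal $T=\mathrm{diag}(1,\varepsilon)$ with $S-T=\mathrm{diag}(0,-\varepsilon)$: then $\norm{S-T}=\varepsilon\le\lambda/\norm{T}=\lambda$, yet $S$ is singular. So I would prove it with $\norm{T^{-1}}$ in the hypothesis. This is the form in which it is used later, with $T$ the linearization at the identity, whose inverse is bounded via the Korn and Schauder estimates.

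Write
\begin{equation*}
S = T + (S-T) = T\bigl(I_X + T^{-1}(S-T)\bigr).
\end{equation*}
Set $K := -T^{-1}(S-T)\in L(X)$. Then $\norm{K}\le \norm{T^{-1}}\norm{S-T}\le\lambda<1$. Since $L(X)$ is a Banach space, the series $\sum_{k\ge0}K^k$ converges absolutely in operator norm, with norm at most $\sum_k\lambda^k = 1/(1-\lambda)$. The telescoping identities $(I-K)\sum_{k=0}^n K^k = \sum_{k=0}^n K^k (I-K) = I-K^{n+1}$, together with $\norm{K^{n+1}}\le\lambda^{n+1}\to0$, show that this sum is a two-sided inverse of $I-K$. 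Hence $I-K$ is invertible with $\norm{(I-K)^{-1}}\le 1/(1-\lambda)$.

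Consequently $S=T(I-K)$ is a composition of two bijective bounded operators, hence invertible, and
\begin{equation*}
S^{-1}=(I-K)^{-1}T^{-1}, \qquad \norm{S^{-1}}\le \norm{(I-K)^{-1}}\,\norm{T^{-1}}\le \frac{1}{1-\lambda}\norm{T^{-1}}.
\end{equation*}
We assume throughout that $S$ is bounded linear, which is implicit in the statement, and that $T^{-1}$ is bounded; the latter follows from the open mapping theorem if $T$ is bounded and bijective.

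The only real obstacle is the mismatch in the hypothesis described above. With $\norm{T^{-1}}$ in place of $1/\norm{T}$, every step is routine.
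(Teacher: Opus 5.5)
Your proof is correct and is the Neumann series argument the paper invokes without giving details: you write $S=T\bigl(I_X+T^{-1}(S-T)\bigr)$, invert the second factor by the Neumann series, and compose to get $\norm{S^{-1}}\le\frac{1}{1-\lambda}\norm{T^{-1}}$. You are also right that the hypothesis must read $\norm{S-T}\le\lambda/\norm{T^{-1}}$. Your diagonal example refutes the literal version, and so does the scalar choice $T=\varepsilon$, $S=0$ with $\varepsilon^2\le\lambda$. Your first scalar attempt $T=2$, $S=1$ does not satisfy the stated hypothesis and should be dropped. Since the later application compares $\mathcal{L}_G$ with a fixed reference operator, the correction only changes how small the radii must be chosen.
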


The following propsition shows that composition is well-behaved in H\"older spaces.

\begin{proposition}\label{proposition composition in Holder spaces}
    Let $m \in \NN_0$, $n,N \in \NN$, $\mu \in (0,1]$, $U \subset \RR^n$ open and let $F \in C^{m+1}(\overline{\Omega} \times U, \RR^N)$. Then, for all $R>0$ and $K \subset U$ compact, there exists a constant $C(R,K)>0$ such that for all $u \in C^{m,\mu}(\overline{\Omega},\RR^n)$ with $\norm{u}_{C^{m,\mu}(\overline{\Omega},\RR^n)} \leq R$ and for which, for all $x \in \overline{\Omega}$, $u(x) \in K$, there holds
        \begin{equation*}
            \norm{F(\cdot, u(\cdot))}_{C^{m,\mu}(\overline{\Omega},\RR^N)} \leq C(R,K).
        \end{equation*}
        If, moreover, $D_2F \in C^{m+1}(\overline{\Omega} \times U, \RR^{n \times N})$, then for every $R>0$ and $K \subset U$ compact, there exists $C(R,K)>0$ such that for all $u, v \in C^{m,\mu}(\overline{\Omega},\RR^n)$ with $\norm{u}_{C^{m,\mu}(\overline{\Omega},\RR^n)},\norm{v}_{C^{m,\mu}(\overline{\Omega},\RR^n)} \leq R$ and with, for all $x \in \overline{\Omega}$ and $t \in [0,1]$, $u(x) + t(v(x) - u(x)) \in K$, there holds
        \begin{equation*}
            \norm{F(\cdot,u(\cdot))-F(\cdot,v(\cdot))}_{C^{m,\mu}(\overline{\Omega},\RR^N)} \leq C(R,K)\norm{u-v}_{C^{m,\mu}(\overline{\Omega},\RR^n)}.
        \end{equation*}
    \end{proposition}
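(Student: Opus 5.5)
The statement to prove is Proposition \ref{proposition composition in Holder spaces}. The plan is induction on $m$, with the base case $m=0$ done by hand.

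\textbf{Case $m=0$.} Since $F \in C^1(\overline{\Omega}\times U)$ and $\overline{\Omega}\times K$ is compact, $F$ and its derivatives $D_1F$, $D_2F$ are bounded on $\overline{\Omega}\times K$ by a constant depending only on $K$. This gives $\sup|F(\cdot,u(\cdot))|\le C(K)$ directly. For the H\"older seminorm we split
\[
|F(x,u(x))-F(z,u(z))|\le |F(x,u(x))-F(z,u(x))|+|F(z,u(x))-F(z,u(z))|.
\]
The first term is bounded by $\sup|D_1F|\,|x-z|$. For the second term the segment between $u(x)$ and $u(z)$ need not lie in $K$. To handle this, fix a compact neighbourhood $K'\subset U$ of $K$ containing the $\delta$-neighbourhood of $K$.
\begin{itemize}
\item If $|u(x)-u(z)|<\delta$, the segment lies in $K'$, and the mean value theorem gives the bound $\sup_{K'}|D_2F|\,R|x-z|^\mu$.
\item Otherwise $|x-z|^\mu\ge \delta/R$, and we simply use $2\sup|F|\le 2\sup|F|\,(R/\delta)|x-z|^\mu$.
\end{itemize}
We also use $|x-z|\le \operatorname{diam}(\Omega)^{1-\mu}|x-z|^\mu$, which holds because $\Omega$ is bounded.

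\textbf{Inductive step.} For $m\ge1$ use the chain rule
\[
\partial_i[F(\cdot,u)]=(\partial_{x_i}F)(\cdot,u)+D_2F(\cdot,u)\,\partial_iu .
\]
Both $\partial_{x_i}F$ and $D_2F$ lie in $C^m$, so the induction hypothesis at level $m-1$ gives $\|D_2F(\cdot,u)\|_{C^{m-1,\mu}}\le C(R,K)$, and similarly for $(\partial_{x_i}F)(\cdot,u)$. Since $\|\partial_iu\|_{C^{m-1,\mu}}\le R$ and $C^{m-1,\mu}$ is a Banach algebra, $\|fg\|\le C\|f\|\|g\|$, we get $\|\partial_i F(\cdot,u)\|_{C^{m-1,\mu}}\le C(R,K)$. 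Together with the sup bound this yields the first claim.

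\textbf{Lipschitz estimate.} Write
\[
F(\cdot,u)-F(\cdot,v)=\Big(\int_0^1 D_2F\big(\cdot,v+t(u-v)\big)\,dt\Big)(u-v).
\]
By hypothesis the arguments $w_t=v+t(u-v)$ take values in $K$ and satisfy $\|w_t\|_{C^{m,\mu}}\le R$ by convexity of the ball. The assumption $D_2F\in C^{m+1}$ lets us apply the first part to $D_2F$, giving $\|D_2F(\cdot,w_t)\|_{C^{m,\mu}}\le C(R,K)$ uniformly in $t$. The H\"older norm of the integral is bounded by the supremum over $t$ of the integrands' norms, since difference quotients commute with the $t$-integral. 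The Banach algebra property then gives the Lipschitz bound.

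\textbf{Main obstacle.} The delicate step is the H\"older estimate in the base case: the segment between $u(x)$ and $u(z)$ may leave $K$, and this is settled by the $\delta$-neighbourhood argument above. Everything else is routine bookkeeping with the product estimate in H\"older spaces.
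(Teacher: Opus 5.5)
Your Lipschitz estimate is exactly the paper's argument. You write $F(\cdot,u)-F(\cdot,v)$ as $\int_0^1 D_2F(\cdot,u+t(v-u))\,dt\,[u-v]$, apply the first assertion to $D_2F\in C^{m+1}$ uniformly in $t$, and use the product estimate in $C^{m,\mu}$. For the first assertion the paper only says it ``can be done in a straightforward manner''. Your induction on $m$ (chain rule plus the Banach-algebra property, reducing to $m=0$) supplies what the paper omits rather than taking a different route. So does the $\delta$-neighbourhood $K'$ of $K$, which correctly handles segments $[u(x),u(z)]$ that leave $K$.

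One step still needs a justification, and it is the $x$-analogue of the obstacle you flagged. The bound $|F(x,p)-F(z,p)|\le \sup|D_1F|\,|x-z|$ uses the mean value theorem on the segment $[z,x]$. That segment need not lie in $\overline{\Omega}$, which is the only place $F$ is defined. In an annulus, for instance, the chord between two nearby points of the inner circle leaves $\overline{\Omega}$.

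If your $C^{m,\mu}$ norm contains only the H\"older seminorm of the top-order derivatives, the same issue hides in the inductive step. Invoking the hypothesis at level $m-1$ requires $\norm{u}_{C^{m-1,\mu}(\overline{\Omega},\RR^n)}\le C\norm{u}_{C^{m,\mu}(\overline{\Omega},\RR^n)}$, i.e.\ control of $[D^{m-1}u]_\mu$ by $\sup|D^mu|$.

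Both are fixed by quasiconvexity of $\overline{\Omega}$. Since $\Omega$ is bounded, connected and has $C^{2,\mu}$ (in particular Lipschitz) boundary, there is $C_\Omega$ such that any $x,z\in\overline{\Omega}$ can be joined by a rectifiable curve in $\overline{\Omega}$ of length at most $C_\Omega|x-z|$. Integrating along that curve gives your bounds with an extra factor $C_\Omega$.

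This is not a formality: boundedness of $\Omega$, which is all you invoke, does not suffice. For a domain with an inward cusp, a $C^1(\overline{\Omega})$ function equal to $\pm r^{3/2}$ on the two sides of the cusp is not $\mu$-H\"older for $\mu>3/4$. So the first assertion itself fails there, and the regularity of $\partial\Omega$ must enter your proof explicitly.
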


    \begin{proof}
        The proof of the first assertion can be done in a straightforward manner. The second assertion is proved as follows. By the mean-value theorem
        \begin{align*}
            & \norm{F(\cdot,u(\cdot))-F(\cdot,v(\cdot))}_{C^{m,\mu}(\overline{\Omega},\RR^N)} 
            \\ &=  \norm{\int_0^1 D_2 F(\cdot,u(\cdot) + t(v(\cdot)-u(\cdot)))[u(\cdot)-v(\cdot)] dt}_{C^{m,\mu}(\overline{\Omega},\RR^N)}
            \\&\leq  C\int_0^1 \norm{D_2 F(\cdot,u(\cdot) + t(v(\cdot)-u(\cdot)))}_{C^{m,\mu}(\overline{\Omega},\RR^{n \times N})}dt\norm{u-v}_{C^{m,\mu}(\overline{\Omega},\RR^n)}
        \end{align*}
        for some constant $C>0$ dependent only on $m$. As $D_2F \in C^{m+1}(\overline{\Omega} \times U,\RR^{n \times N})$, the first statement yields the second inequality.
    \end{proof}

We review a generalized Korn inequality which applies to the case that the symmetric part of the gradient is pertubed by a mapping that is sufficiently regular and close to the unit matrix. The following theorem is due to Pompe and can be found in \cite{Pompe.2003}. It is based on earlier work by Neff, who proved it in \cite{Neff.2002} in case the pertubation is in $C^2$ and $d=3$, neither of which is necessarily the case here. As we exploit uniformity of the constant in the pertubation, we cite a version of the statement from \cite{Mielke.Roubicek.2016}, also cf. Theorem 3.5 in \cite{Neff.2005} for a slightly less general variant. 

\begin{proposition}[Corollary 3.4; \cite{Mielke.Roubicek.2016}]\label{proposition Korn Pompe Neff}
    For $\lambda \in (0,1)$ and $K>1$ we define the set
    \begin{equation*}
        F_{K}:=\{F \in C^{0,\lambda}(\overline{\Omega},\RR^{d \times d}):\, \norm{F}_{C^{0,\lambda}(\overline{\Omega},\RR^{d \times d})}\leq K,\, \min_{x \in \overline{\Omega}} \det (F(x)) \geq \frac{1}{K}\}.
    \end{equation*}
    Then there exists a constant $C(K)>0$ such that for all $F \in F_K$ and for all $v \in H^1(\Omega,\RR^d )$ with $v \vert_{\Gamma_D}=0$
    \begin{equation*}
        \int_{\Omega} |F^T\nabla v +(\nabla v)^T F|^2dx \geq C(K)\norm{v}_{H^1(\Omega,\RR^d)}^2.
    \end{equation*}
\end{proposition}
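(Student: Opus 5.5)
The plan is to take Pompe's Korn inequality for a \emph{single} field as known and to upgrade it to a constant that is uniform on $F_K$ by a compactness argument. Concretely, I would assume from \cite{Pompe.2003} that for every fixed $F_0 \in C^{0}(\overline{\Omega},\RR^{d \times d})$ with $\min_{\overline{\Omega}}\det F_0>0$ there is a constant $c(F_0)>0$ such that
\begin{equation*}
    \int_{\Omega} |F_0^T\nabla v +(\nabla v)^T F_0|^2dx \geq c(F_0)\norm{v}_{H^1(\Omega,\RR^d)}^2
\end{equation*}
for all $v \in H^1(\Omega,\RR^d)$ with $v\vert_{\Gamma_D}=0$. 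Here $\Gamma_D$ has positive surface measure, which is what excludes the infinitesimal rigid motions. The Hölder bound in $F_K$ is then used only to obtain compactness, not to prove the inequality itself.

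\textbf{Step 1 (stability under $C^0$-perturbations).} For $F,F_0$ as above, the pointwise estimate
\begin{equation*}
    |F^T\nabla v+(\nabla v)^TF| \geq |F_0^T\nabla v+(\nabla v)^TF_0| - 2\norm{F-F_0}_{C^0}|\nabla v|
\end{equation*}
holds. Combining it with $(a-b)^2\geq \tfrac12 a^2-b^2$ gives
\begin{equation*}
    \int_\Omega |F^T\nabla v+(\nabla v)^TF|^2dx \geq \Big(\tfrac12 c(F_0)-4\norm{F-F_0}_{C^0}^2\Big)\norm{v}_{H^1}^2 .
\end{equation*}
Hence, if $\norm{F-F_0}_{C^0}\leq \tfrac14\sqrt{c(F_0)}$, the inequality holds for $F$ with constant $c(F_0)/4$.

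\textbf{Step 2 (compactness of $F_K$ in $C^0$).} The set $F_K$ is bounded in $C^{0,\lambda}(\overline{\Omega},\RR^{d \times d})$, so it is equicontinuous and uniformly bounded, and Arzelà–Ascoli yields precompactness in $C^0(\overline{\Omega},\RR^{d\times d})$. It is also closed in $C^0$. If $F_n\to F$ uniformly with $F_n \in F_K$, then the Hölder seminorm bound passes to the limit by pointwise convergence, and $\det F\geq 1/K$ follows from the continuity of the determinant. Hence $F_K$ is compact in $C^0$. Moreover, every $F \in F_K$ has $\det F\geq 1/K>0$, so $c(F)$ from Pompe's theorem is defined for every $F\in F_K$.

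\textbf{Step 3 (finite cover).} The open $C^0$-balls of radius $\tfrac14\sqrt{c(F_0)}$ around the points $F_0\in F_K$ cover $F_K$. By compactness, finitely many of them, centred at $F_1,\dots,F_m$, suffice. By Step 1, the choice $C(K):=\tfrac14\min_i c(F_i)>0$ then works for all $F\in F_K$.

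The main obstacle is the fixed-$F$ inequality itself. If it were not taken from \cite{Pompe.2003}, I would prove it in three parts. First, freeze coefficients locally: on small patches, $F$ is close to a constant invertible matrix $A$, and the substitution $w=A^{-T}$-twisted Korn, i.e.\ applying the classical Korn inequality to $x\mapsto A v$, gives a local estimate. Gluing the patches with a partition of unity then yields a Gårding inequality with an $L^2$ lower-order term. Second, remove the $L^2$ term by a contradiction argument using Rellich compactness. This requires showing that the only $v$ with $F^T\nabla v+(\nabla v)^TF=0$ and $v\vert_{\Gamma_D}=0$ is $v=0$. That kernel characterization is the delicate point for merely continuous $F$, and it is exactly where Pompe's work is needed.
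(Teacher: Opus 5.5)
The paper does not prove this proposition. It quotes the uniform statement from \cite{Mielke.Roubicek.2016}, which rests on Pompe's inequality for a single coefficient field \cite{Pompe.2003}. Your argument supplies that deduction, and it is correct.

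\begin{itemize}
\item The pointwise perturbation bound, combined with $(a-b)^2\geq \frac12 a^2-b^2$, makes the constant stable under small uniform perturbations of $F$. That inequality also covers the case $a<b$, where the right-hand side is negative.
\item The set $F_K$ is compact in $C^0(\overline{\Omega},\RR^{d\times d})$. This follows from Arzel\`a--Ascoli, lower semicontinuity of the H\"older norm under uniform convergence, and continuity of the determinant.
\item A finite subcover then produces $C(K)$.
\end{itemize}

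Compared with the bare citation, your route shows that the H\"older bound enters only through $C^0$-compactness. Any $C^0$-compact family with determinants bounded below would work equally well. It also shows that $C(K)$ is obtained non-constructively.

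Your black box needs one more sentence. As far as I recall, Pompe states his inequality for $(\nabla u)F+F^T(\nabla u)^T$, not for the expression in the statement. If so, the two are equivalent via
\begin{equation*}
F^T\nabla v+(\nabla v)^TF=F^T\bigl(\nabla v\,F^{-1}+F^{-T}(\nabla v)^T\bigr)F .
\end{equation*}
Apply his result to the continuous field $F_0^{-1}$, whose determinant is bounded below by $1/\max_{\overline{\Omega}}\det F_0$. This gives your fixed-$F_0$ inequality, with the constant multiplied by $\bigl(\max_{\overline{\Omega}}|F_0^{-1}|\bigr)^{-4}$.

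A minor point in your fallback sketch: for the expression in the statement, the frozen operator is $\mathrm{sym}\,\nabla(A^Tv)$. So classical Korn should be applied to $A^Tv$, not to $Av$.
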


Moreover, we review some results on the existence, uniqueness, and regularity of solutions of linear elliptic systems of partial differential equations.

\begin{definition}\label{definition Legendre-Hadamard ellipticity}
    Let for $\alpha, \beta, i,j \in \{1, \dots , d\}$ the coefficients $A_{ij}^{\alpha\beta} \in L^{\infty}(\Omega)$. We say that the operator $A \in L^{\infty}(\Omega,\RR^{d^4})$, defined component-wise via $A^{\alpha  \beta}_{ij}$, is uniformly elliptic in the Cauchy-Hadamard sense if there exists a constant $\nu>0$ independent of $x \in \Omega$ such that for all $p,q \in \RR^d$ and for almost all $x \in \Omega$
    \begin{equation*}
        A_{ij}^{\alpha\beta}(x)q^i q^j p_{\alpha} p_{\beta} \geq \nu |p|^2|q|^2.
    \end{equation*}
\end{definition}

\begin{proposition}\label{Proposition Coercivity implies ellipticity}
    Let, for $\alpha, \beta, i,j \in \{1, \dots ,d\}$, $A_{ij}^{\alpha\beta} \in L^{\infty}(\Omega)$ and assume that for some constant $\nu>0$ for all $v \in H_0^1(\Omega,\RR^d)$ the coercivity condition
    \begin{equation}\label{coercivity condition elliptic PDE}
        \int_{\Omega} A_{ij}^{\alpha\beta}\partial_{\beta} v^i \partial_{\alpha} v^j dx \geq \nu \int_{\Omega} |\nabla v|^2 dx
    \end{equation}
    is satisfied. Then, for almost every $x \in \Omega$ the Legendre-Hadamard condition is satisfied, i.e., for all $p,q \in \RR^d$ and for almost all $x \in \Omega$
    \begin{equation*}
        A_{ij}^{\alpha\beta}(x)q^i q^j p_{\alpha} p_{\beta} \geq \nu |p|^2|q|^2.
    \end{equation*}
\end{proposition}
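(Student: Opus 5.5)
The claim is that coercivity of the bilinear form on $H_0^1$ forces the pointwise Legendre--Hadamard condition. The plan is the classical localization argument: test \eqref{coercivity condition elliptic PDE} with rapidly oscillating functions concentrated near a Lebesgue point.

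\textbf{Step 1 (test functions).} Fix $p,q\in\RR^d$, a cutoff $\eta\in C_c^\infty(\Omega)$ real-valued, and $t>0$. Insert
\begin{equation*}
v_t(x)=\eta(x)\,q\cos(t\,p\cdot x)\quad\textup{and}\quad w_t(x)=\eta(x)\,q\sin(t\,p\cdot x),
\end{equation*}
both of which lie in $H_0^1(\Omega,\RR^d)$, into \eqref{coercivity condition elliptic PDE} and add the two inequalities. Since $\partial_\beta v_t^i = -t\,p_\beta q^i\eta\sin(\cdot)+q^i\partial_\beta\eta\cos(\cdot)$, and similarly for $w_t$, the identity $\cos^2+\sin^2=1$ gives
\begin{equation*}
\partial_\beta v_t^i\partial_\alpha v_t^j+\partial_\beta w_t^i\partial_\alpha w_t^j = t^2\eta^2 p_\alpha p_\beta q^iq^j+q^iq^j\partial_\alpha\eta\,\partial_\beta\eta + t\,(\textup{cross terms}).
\end{equation*}
The cross terms are $t\,\eta\,q^iq^j(p_\beta\partial_\alpha\eta - p_\alpha\partial_\beta\eta)$ multiplied by $\sin\cos-\cos\sin=0$, so they cancel identically. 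Hence summing the two inequalities yields
\begin{equation*}
t^2\!\int_\Omega A_{ij}^{\alpha\beta}q^iq^jp_\alpha p_\beta\,\eta^2\,dx+\int_\Omega A_{ij}^{\alpha\beta}q^iq^j\partial_\alpha\eta\,\partial_\beta\eta\,dx \ \geq\ \nu\!\int_\Omega \big(t^2|p|^2|q|^2\eta^2+|q|^2|\nabla\eta|^2\big)dx.
\end{equation*}
(Equivalently, one may use the complex test function $\eta q e^{itp\cdot x}$, which is the same computation.) Dividing by $t^2$ and letting $t\to\infty$, using $A\in L^\infty$, gives
\begin{equation*}
\int_\Omega \big(A_{ij}^{\alpha\beta}(x)q^iq^jp_\alpha p_\beta-\nu|p|^2|q|^2\big)\eta^2\,dx\geq 0 .
\end{equation*}

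\textbf{Step 2 (localization).} Let $h(x):=A_{ij}^{\alpha\beta}(x)q^iq^jp_\alpha p_\beta-\nu|p|^2|q|^2$, which is in $L^\infty$. By Step 1, $\int h\,\eta^2\,dx\ge0$ for every $\eta\in C_c^\infty(\Omega)$. Every nonnegative $\phi\in C_c(\Omega)$ is a uniform limit, with supports in a fixed compact set, of functions of the form $\eta^2$: take $\eta=\sqrt{\phi+\varepsilon\chi}-\sqrt{\varepsilon}\,(\ldots)$, or more simply mollify $\sqrt\phi$. Hence $\int h\phi\,dx\ge 0$ for all such $\phi$, and therefore $h\ge0$ almost everywhere, say by Lebesgue differentiation.

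\textbf{Step 3 (uniform null set).} The exceptional null set depends on $(p,q)$. To remove this dependence, take the union of the null sets over $p,q\in\mathbb{Q}^d$, which is still a null set. Outside this set the inequality holds for all rational $p,q$, and it extends to all real $p,q$ because, for fixed $x$, both sides are polynomials and hence continuous in $(p,q)$.

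The only delicate point is the cancellation of the $O(t)$ cross terms in Step 1, which is why both the cosine and the sine test functions must be used. The remaining steps are routine measure theory.
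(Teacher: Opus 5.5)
Your proof is correct and uses the same idea as the paper, which tests with the single function $q\phi\sin(\lambda p\cdot x)$ and lets $\lambda\to\infty$. That route leaves the oscillating factor $\cos^2(\lambda p\cdot x)$ in the leading term, so it implicitly needs $\cos^2(\lambda p\cdot x)\rightharpoonup\frac12$ (Riemann--Lebesgue). Your sine--cosine pairing removes the oscillation exactly, and you also spell out the localization and the uniform null set that the paper omits.

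One correction to your closing remark: the cross terms are only $O(t)$ against an $O(t^2)$ leading term, so they vanish after dividing by $t^2$ even without cancelling. The real benefit of using both test functions is the identity $\sin^2+\cos^2=1$ in the leading term.
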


\begin{proof}
    Let $p,q \in \RR^d$, let $\phi \in C^{\infty}_0(\overline{\Omega})$ and $\lambda>0$. The claim follows by taking $u \in H_0^1(\Omega,\RR^d)$ defined via $v(x):=q \phi(x)\sin(\lambda p \cdot x)$ as a test function in \eqref{coercivity condition elliptic PDE} and letting $\lambda$ converge to $\infty$.
\end{proof}

The following theorem is a direct consequence of Lax-Milgram's theorem and the regularity theory developed in \cite{Agmon.Douglis.Nirenberg.1959, Agmon.Douglis.Nirenberg.1964, Douglis.Nirenberg.1955}, cf. in particular Theorem 9.3 in \cite{Agmon.Douglis.Nirenberg.1964}.

\begin{theorem}\label{Theorem existence, regularity and uniqueness linear elliptic systems}
    Let $A=(A_{ij}^{\alpha\beta})_{ij\alpha\beta}\in C^{1,\mu}(\overline{\Omega}, \RR^{d^4})$ be an operator for which the coercivity condition \eqref{coercivity condition elliptic PDE} in Proposition \ref{Proposition Coercivity implies ellipticity} is satisfied and $f \in C^{0,\mu}(\overline{\Omega},\RR^d)$.
    Then, there exists a unique solution $v\in C^{2,\mu}(\overline{\Omega},\RR^d)$ to the boundary value problem
    \begin{equation*}
        \begin{cases}
            -\partial_{\beta}(A_{ij}^{\alpha\beta}\partial_\alpha v^j)=f_i &\textup{ on } \Omega
            \\ v=0_d &\textup{ on } \Gamma_D
            \\ A_{ij}^{\alpha\beta}\partial_{\alpha} v^j \n_{\beta}=0 &\textup{ on } \Gamma_N
        \end{cases}
    \end{equation*}
    for $i\in \{1,\dots, d\}$ and there exists a constant $C>0$ depending only on $\Omega, \Gamma_D, \Gamma_N, \mu, d$ and $A$ such that
    \begin{equation*}
        \norm{v}_{C^{2,\mu}(\overline{\Omega},\RR^d)}\leq C\norm{f}_{C^{0,\mu}(\overline{\Omega},\RR^d)}.
    \end{equation*}
\end{theorem}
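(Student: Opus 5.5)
The plan is to combine a variational existence argument with the a priori Schauder theory of Agmon, Douglis and Nirenberg, in four steps.

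\textbf{Step 1 (weak solution).} Let $V:=\{v \in H^1(\Omega,\RR^d):\, v\vert_{\Gamma_D}=0\}$, which is a closed subspace of $H^1(\Omega,\RR^d)$. On $V$ define
\begin{equation*}
a(v,w):=\int_\Omega A_{ij}^{\alpha\beta}\partial_\alpha v^j\partial_\beta w^i\,dx, \qquad \ell(w):=\int_\Omega f_i w^i\,dx .
\end{equation*}
I read the coercivity hypothesis \eqref{coercivity condition elliptic PDE} as holding for all $v \in V$, not only on $H_0^1$. This is how it will be used later, where it is verified via Proposition \ref{proposition Korn Pompe Neff} for the linearization around the identity. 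Since $\Gamma_D\neq\emptyset$ has positive measure, Poincaré's inequality on $V$ upgrades the bound $\nu\norm{\nabla v}_{L^2}^2$ to $c\norm{v}_{H^1}^2$. Because $A$ is bounded, Lax--Milgram then gives a unique $v \in V$ with $a(v,w)=\ell(w)$ for all $w\in V$. This is exactly the weak formulation of the mixed problem, with the conormal condition on $\Gamma_N$ arising as the natural boundary condition. Uniqueness of classical solutions follows from uniqueness of weak solutions.

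\textbf{Step 2 (ellipticity and the boundary operators).} By Proposition \ref{Proposition Coercivity implies ellipticity}, $A$ satisfies the Legendre--Hadamard condition with constant $\nu$. Since $A$ is continuous, this holds at every point of $\overline{\Omega}$, so the system is uniformly strongly elliptic. The sets $\Gamma_D$ and $\Gamma_N$ are disjoint and closed, hence at positive distance from each other. Every boundary point therefore has a neighbourhood carrying only a pure Dirichlet or a pure conormal (Neumann) condition, and no transition points occur. For strongly elliptic systems in divergence form both conditions satisfy the complementing (Lopatinskii--Shapiro) condition of \cite{Agmon.Douglis.Nirenberg.1964}. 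For the conormal condition this follows from the coercivity of $a$ on $V$, by the classical equivalence between coerciveness of the form and complementing conditions. Using $A\in C^{1,\mu}$, $\partial\Omega\in C^{2,\mu}$ and $f\in C^{0,\mu}$, I would then expand $-\partial_\beta(A\partial_\alpha v)$ into non-divergence form with $C^{0,\mu}$ lower-order coefficients.

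\textbf{Step 3 (regularity of the weak solution).} This is the main obstacle, because the ADN estimates are a priori estimates and presuppose $v\in C^{2,\mu}$. I would first use localization and difference quotients in tangential directions, in flattened charts, to obtain $v\in H^2$ near each boundary piece and in the interior. Next I would apply the $L^p$ theory of \cite{Agmon.Douglis.Nirenberg.1964} (Theorem 10.5 there) and bootstrap to $v \in W^{2,p}$ for all $p<\infty$, hence $v\in C^{1,\gamma}$. Finally the local Schauder theory (Theorem 9.3 in \cite{Agmon.Douglis.Nirenberg.1964}) gives $v\in C^{2,\mu}(\overline{\Omega},\RR^d)$. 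If the bootstrap becomes delicate, I would instead use the method of continuity along $A_s=(1-s)\,\delta_{ij}\delta^{\alpha\beta}+sA$. Each $A_s$ is coercive on $V$ with a constant uniform in $s$, and for the Laplacian with mixed separated boundary conditions classical solvability is standard.

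\textbf{Step 4 (estimate).} Theorem 9.3 of \cite{Agmon.Douglis.Nirenberg.1964} yields
\begin{equation*}
\norm{v}_{C^{2,\mu}}\leq C\bigl(\norm{f}_{C^{0,\mu}}+\norm{v}_{C^0}\bigr).
\end{equation*}
To remove the $C^0$-term I would argue by contradiction. Suppose $\norm{v_k}_{C^{2,\mu}}=1$ and $\norm{f_k}_{C^{0,\mu}}\to0$. By Arzelà--Ascoli, a subsequence converges in $C^2$ to a solution of the homogeneous problem. That limit vanishes by the uniqueness in Step 1, so $\norm{v_k}_{C^0}\to0$. The a priori estimate then forces $\norm{v_k}_{C^{2,\mu}}\to0$, which contradicts $\norm{v_k}_{C^{2,\mu}}=1$. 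The resulting constant $C$ depends only on $\Omega,\Gamma_D,\Gamma_N,\mu,d$ and $A$.
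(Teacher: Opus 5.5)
Your proposal is correct and follows the paper's route: the paper gives no argument beyond calling the theorem a direct consequence of Lax--Milgram and the Agmon--Douglis--Nirenberg Schauder theory (Theorem 9.3 in \cite{Agmon.Douglis.Nirenberg.1964}), and your four steps supply what that sentence leaves implicit, namely reading the coercivity hypothesis on $\{v\in H^1(\Omega,\RR^d):\, v\vert_{\Gamma_D}=0\}$ rather than on $H_0^1$ (genuinely needed, since $H_0^1$-coercivity alone does not control the conormal condition on $\Gamma_N$), the complementing conditions, and the compactness argument removing $\norm{v}_{C^0}$. The one soft spot is the primary bootstrap in Step 3, which, as you note yourself, again invokes ADN results that are a priori estimates; it is your method-of-continuity fallback (with ADN constants and the Step 4 compactness argument made uniform in $s\in[0,1]$) that actually closes existence and regularity, so present that as the main argument.
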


\section{Existence and Lipschitz Dependence of Solutions to the quasi-static Momentum Balance Equation}

In this section, we prove existence of stationary solutions via contraction mapping arguments. We start with some notation and general remarks.

\begin{remark}\label{remark shifted operator}
  Let $f \in C^{2,\mu}(\Gamma_D,\RR^d)$ and $g \in C^{1,\mu}(\Gamma_N,\RR^d)$. Moreover, let, for $T:C^{2,\mu}(\overline{\Omega},\RR^d) \to C^{2,\mu}(\Gamma_D,\RR^d)$ denoting the trace operator, $E: C^{2,\mu}(\Gamma_D,\RR^d) \to C^{2,\mu}(\overline{\Omega},\RR^d)$ be a continuous right-inverse and $C_T:= \norm{E}_{\op}$. We define $\tilde{f}:=\id + E(f-\id)$. Then, 
  \begin{equation*}
      \norm{\tilde{f}-\id}_{C^{2,\mu}(\overline{\Omega},\RR^d)} \leq C_T\norm{f-\id}_{C^{2,\mu}(\Gamma_D,\RR^d).}
  \end{equation*}
  Therefore, if $f$ is close to the identity, $\tilde{f}$ is close to the identity as well. In particular, if \\ $\norm{f-\id}_{C^{2,\mu}(\Gamma_D,\RR^d)} \leq \frac{1}{(d+1)C_T}$, there exists a constant $c>0$ such that $\det(\nabla \tilde{f})>c$ on $\overline{\Omega}$. Let $R_G^1 \in (0,R)$ as in \textbf{(W1)} and $G \in B_{R_G^1}(\mathbf{1}_d)$. Moreover, let $R_{\nabla y}^2 \in (0,R)$ and let $R_f^1>0$ be sufficiently small. We define the operator $\mathcal{A}_G^0:B_{R_{\nabla y}^2}(\mathbf{0}_d)\subset C^{1,\mu}(\overline{\Omega},\RR^{d \times d}) \to C^{1,\mu}(\overline{\Omega},\RR^{d \times d}) \times C^{1,\mu}(\Gamma_N,\RR^d)$ via
    \begin{equation}\label{equation shifted operator}
        \mathcal{A}_G^0(U)= \begin{bmatrix}
            \mathcal{A}_G^{0,1}(U) \\ \mathcal{A}_G^{0,2}(U)
        \end{bmatrix}:=\begin{bmatrix}
            \det(G)D_pW(\cdot, (U+\nabla \tilde f)G^{-1})G^{-T}
            \\\det(G)D_pW(\cdot, (U+\nabla \tilde f)G^{-1})G^{-T}\n-g
        \end{bmatrix}.
    \end{equation}
    Moreover, we define $\Div_1:C^{1,\mu}(\overline{\Omega},\RR^{d \times d}) \times C^{1,\mu}(\Gamma_N,\RR^d) \to C^{0,\mu}(\overline{\Omega},\RR^d) \times C^{1,\mu}(\Gamma_N,\RR^d)$ via 
    \begin{equation*}
        \Div_1\biggl(\begin{bmatrix}
        \Phi \\ \varphi
    \end{bmatrix}\biggr):=\begin{bmatrix}
        \Div(\Phi)\\ \varphi
    \end{bmatrix}.
    \end{equation*}
    A map $y \in C^{2,\mu}(\overline{\Omega},\RR^d)$ with $y=f$ on $\Gamma_D$ and $\mathcal{A}_G(\nabla y)\n=g$ on $\Gamma_N$ satisfies $-\Div(\mathcal{A}_G(\nabla y))=0_d$, cf. \eqref{equation operator quasi-static momentum balance equation}, if and only if the map $u:=y-\tilde{f}$ satisfies the equation
    \begin{equation}\label{equation shifted operator quasi-static momentum balance equation}
        -\Div\mathcal{A}^{0,1}_G(\nabla u) =0_d
    \end{equation}
    together with the boundary conditions $u=0_d$ on $\Gamma_D$ and $\mathcal{A}^{0,2}_G(\nabla u)=0_d$ on $\Gamma_N$.
\end{remark}

\begin{lemma}\label{Lemma operator governing the elastic deformation is twice continuously Frechet differentiable}
    Let $R_G^1 \in (0,R)$ as in \textbf{(W1)} and $G \in B_{R_G^1}(\mathbf{1}_d)$. Moreover, let $R_y^1 \in (0,R)$ be sufficiently small and let $R_f^1>0$ be as in Remark \ref{remark shifted operator}. For $f \in B_{R_f^1}(\id)$, the operator $\mathcal{A}^0_G$ as defined in \eqref{equation shifted operator} is twice continuously Fréchet-differentiable and its linearization around a point $\nabla u_0 \in C^{1,\mu}(\overline{\Omega}, \RR^{d \times d})$ with $u_0 \in B_{R_y^1}(\id) \subset C^{2,\mu}(\overline{\Omega},\RR^d)$, $D\mathcal{A}_G^0(\nabla u_0): C^{1,\mu}(\overline{\Omega}, \RR^{d \times d}) \to C^{1, \mu}(\overline{\Omega}, \RR^{d\times d}) \times C^{1,\mu}(\Gamma_N,\RR^d)$, is given pointwise by 
    \begin{equation*}
        \Phi \mapsto D\mathcal{A}^0_G(\nabla u_0)[\Phi]=\begin{bmatrix}
            \det(G) D_p^2W(\cdot, \nabla (u_0+ \tilde{f}) G^{-1})[\Phi G^{-1}]G^{-T}
            \\ \det(G) D_p^2W(\cdot, \nabla (u_0+ \tilde{f}) G^{-1})[\Phi G^{-1}]G^{-T}\n
        \end{bmatrix}.
    \end{equation*}
    Moreover, for $\Phi, \Psi \in C^{1,\mu}(\overline{\Omega}, \RR^{d \times d})$
    \begin{equation*}
        D^2\mathcal{A}^0_G(\nabla u_0)[\Phi, \Psi] = \begin{bmatrix}
            \det(G) D_p^3W(\cdot, \nabla (u_0+ \tilde{f}) G^{-1})[\Psi G^{-1}, \Phi G^{-1}]G^{-T}
            \\ \det(G) D_p^3W(\cdot, \nabla (u_0+ \tilde{f}) G^{-1})[\Psi G^{-1}, \Phi G^{-1}]G^{-T}\n
        \end{bmatrix}
        .
    \end{equation*}
\end{lemma}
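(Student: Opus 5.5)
The proof reduces Fréchet differentiability of $\mathcal{A}^0_G$ to that of a Nemytskii (superposition) operator on the Banach algebra $C^{1,\mu}(\overline{\Omega},\RR^{d\times d})$. Fix $G\in B_{R_G^1}(\mathbf{1}_d)$ and $f\in B_{R_f^1}(\id)$. Since $\det G$, $G^{-1}$ and $G^{-T}$ are smooth functions of $G$ near $\mathbf{1}_d$, they lie in $C^{1,\mu}$. Multiplication by these fixed functions is a bounded linear operator on $C^{1,\mu}$, and so is the map $U\mapsto UG^{-1}$. The second component is obtained from the first by taking the trace on $\Gamma_N$, contracting with the $C^{1,\mu}$ normal field $\n$ (available because $\partial\Omega\in C^{2,\mu}$), and subtracting the constant $g$. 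These are bounded linear and affine operations into $C^{1,\mu}(\Gamma_N,\RR^d)$. It therefore suffices to show that
\begin{equation*}
    \mathcal{N}: V\mapsto D_pW(\cdot,V(\cdot)),\qquad V\in C^{1,\mu}(\overline{\Omega},\RR^{d\times d}),
\end{equation*}
is twice continuously Fréchet differentiable near the point $V_0=\nabla(u_0+\tilde f)G^{-1}$, with derivatives given by the pointwise formulas $D_p^2W(\cdot,V)[\cdot]$ and $D_p^3W(\cdot,V)[\cdot,\cdot]$. The chain rule then yields exactly the stated expressions for $D\mathcal{A}^0_G$ and $D^2\mathcal{A}^0_G$. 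The only subtlety is the order of the arguments $[\Psi G^{-1},\Phi G^{-1}]$, which is immaterial by the symmetry of third derivatives.

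First I make sure everything stays in the domain of $W$. Choose $R_y^1$, $R_f^1$, $R_G^1$ small; by Remark \ref{remark shifted operator} the perturbation $\nabla\tilde f-\mathbf{1}_d$ is small. Then $\norm{V_0-\mathbf{1}_d}_{\infty}$ is small, so $V_0(x)$ lies in a fixed compact ball $K\subset B_R(\mathbf{1}_d)$. The same holds for every $V$ in a small $C^{1,\mu}$-ball around $V_0$, and for all convex combinations of two such $V$, since the sup norm is bounded by the Hölder norm.

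Next comes differentiability of $\mathcal{N}$. By (W1), $D_pW\in C^4$, $D_p^2W\in C^3$ and $D_p^3W\in C^2$ on $\overline{\Omega}\times B_R(\mathbf{1}_d)$. For $H$ small, Taylor's formula with integral remainder gives
\begin{equation*}
    D_pW(\cdot,V+H)-D_pW(\cdot,V)-D_p^2W(\cdot,V)[H]=\int_0^1(1-s)\,D_p^3W(\cdot,V+sH)[H,H]\,ds .
\end{equation*}
Since $C^{1,\mu}$ is a Banach algebra up to a constant, the $C^{1,\mu}$-norm of the integrand is at most $C\norm{D_p^3W(\cdot,V+sH)}_{C^{1,\mu}}\norm{H}_{C^{1,\mu}}^2$. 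The first part of Proposition \ref{proposition composition in Holder spaces} with $m=1$ bounds $\norm{D_p^3W(\cdot,V+sH)}_{C^{1,\mu}}$ uniformly, because $D_p^3W\in C^2$. This gives the remainder $O(\norm{H}^2)$, so $\mathcal{N}$ is Fréchet differentiable with $D\mathcal{N}(V)=D_p^2W(\cdot,V)[\cdot]$.

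Repeating the argument one order higher, with remainder involving $D_p^4W\in C^1$, shows that $V\mapsto D_p^2W(\cdot,V)$ is differentiable as a map into $\mathcal{L}(C^{1,\mu},C^{1,\mu})$, with derivative $D_p^3W(\cdot,V)[\cdot,\cdot]$. Here I use the estimate $\norm{M[H]}_{C^{1,\mu}}\le C\norm{M}_{C^{1,\mu}}\norm{H}_{C^{1,\mu}}$ to pass to operator norms. Continuity of $V\mapsto D_p^3W(\cdot,V)$ in $C^{1,\mu}$ follows from the Lipschitz part of Proposition \ref{proposition composition in Holder spaces}, which requires $D_p^3W\in C^2$ and $D_p^4W\in C^2$; both are available since $W\in C^5$. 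This Lipschitz bound gives continuity of the second derivative in operator norm, and hence $C^2$ regularity.

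The main obstacle is the bookkeeping of regularity. Each time Proposition \ref{proposition composition in Holder spaces} is applied with $m=1$, one extra derivative of the integrand is needed, plus one more for the Lipschitz estimate. Counting shows that $W\in C^5$ is exactly enough: $D_pW$ is differentiated three times and then needs two further derivatives for the Lipschitz estimate on $D_p^3W$.

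A second point to handle carefully is that the $x$-dependence of $W$ must also be controlled in $C^{1,\mu}$. This works because $W$ is $C^5$ jointly in $(x,F)$ on $\overline{\Omega}\times B_R(\mathbf{1}_d)$, which is what the first part of Proposition \ref{proposition composition in Holder spaces} requires.
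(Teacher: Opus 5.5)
Your strategy is the natural expansion of the paper's one-line proof (``direct consequence of (W1) and the definition''). You reduce to the superposition operator $V\mapsto D_pW(\cdot,V(\cdot))$ on $C^{1,\mu}$, compose with bounded linear and affine maps, and use the chain rule. The first-order step is correct. The quadratic remainder only needs a uniform bound on $\norm{D_p^3W(\cdot,V+sH)}_{C^{1,\mu}}$, and Proposition~\ref{proposition composition in Holder spaces} supplies it because $D_p^3W\in C^2$. (You also rightly read the base point as $u_0$ near $0_d$; taken literally, $u_0\in B_{R_y^1}(\id)$ would put $\nabla(u_0+\tilde f)G^{-1}$ near $2\mathbf{1}_d$, outside $B_R(\mathbf{1}_d)$.)

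\textbf{The gap: the second-order step.} There your derivative count is off by one. From $W\in C^5$ you only get $D_p^4W\in C^1$, not $C^2$. So the Lipschitz part of Proposition~\ref{proposition composition in Holder spaces} does not apply to $F=D_p^3W$ with $m=1$: it requires $D_2F=D_p^4W\in C^{2}$, i.e.\ six derivatives of $W$. Your own phrase ``$D_pW$ is differentiated three times and then needs two further derivatives'' in fact counts to six. The same problem affects ``repeating the argument one order higher''. That step needs a uniform bound on $\norm{D_p^4W(\cdot,V+sH)}_{C^{1,\mu}}$. With $D_p^4W$ only $C^1$, the $x$-gradient $\partial_xD_p^4W(\cdot,V)+D_p^5W(\cdot,V)\nabla V$ is merely continuous, not $\mu$-H\"older. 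As written, neither the differentiability of $V\mapsto D_p^2W(\cdot,V)$ into $\mathcal{L}(C^{1,\mu},C^{1,\mu})$ nor the continuity of the second derivative is established.

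\textbf{A fix that keeps $W\in C^5$.} Use the first-order integral form
\begin{equation*}
D_p^2W(\cdot,V+H)-D_p^2W(\cdot,V)-D_p^3W(\cdot,V)[H]=\int_0^1\bigl(D_p^3W(\cdot,V+sH)-D_p^3W(\cdot,V)\bigr)[H]\,ds .
\end{equation*}
This reduces both claims to mere \emph{continuity}, not Lipschitz continuity, of $V\mapsto D_p^3W(\cdot,V(\cdot))$ from $C^{1,\mu}$ into $C^{1,\mu}$. That continuity does hold for $D_p^3W\in C^2$. After taking the $x$-gradient, it reduces to continuity of $u\mapsto\psi(\cdot,u(\cdot))$ in $C^{0,\mu}$ for $\psi\in C^1$. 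You obtain this by comparing the increments of $\psi(\cdot,u_n)$ and $\psi(\cdot,u)$ along segments. This bounds $[\psi(\cdot,u_n)-\psi(\cdot,u)]_{\mu}$ by
\begin{equation*}
\norm{D\psi}_\infty[u_n-u]_\mu+\omega(\norm{u_n-u}_\infty)\,(C+[u]_\mu),
\end{equation*}
where $\omega$ is a modulus of continuity of $D\psi$ on a compact set. This tool is not contained in Proposition~\ref{proposition composition in Holder spaces}, so it must be supplied separately. Alternatively, strengthening (W1) to $W\in C^6$ makes your argument go through verbatim.
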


\begin{proof}
    The proof is a direct consequence of Assumption \textbf{(W1)} and the definition of $\mathcal{A}_G^0$, cf. \eqref{equation shifted operator}.
\end{proof}

Moreover, for $R_G=R>0$ with $R$ as in \textbf{(W1)} and $G \in B_{R_G}(\mathbf{1})$, we define the linear operator $\mathcal{L}_G:C^{2,\mu}_{0,\Gamma_D}(\overline{\Omega},\RR^d) \to C^{0,\mu}(\overline{\Omega},\RR^d) \times C^{1,\mu}(\Gamma_N,\RR^d)$, which coincides with the linearization of $-\Div_1(\mathcal{A}_G^0(\nabla \cdot))$ in the point $0_d \in C^{2,\mu}_{0,\Gamma_D}(\overline{\Omega},\RR^d)$, via
\begin{equation}\label{equation definition linearized operator}
    \mathcal{L}_G[\varphi]=
    \begin{bmatrix}
        \mathcal{L}_G^1[\varphi] \\ \mathcal{L}_G^2[\varphi]
    \end{bmatrix}
    := \begin{bmatrix}
        -\Div(\det(G) D_p^2W(\cdot, \nabla (0_d+ \tilde{f}) G^{-1})[\nabla \varphi G^{-1}]G^{-T})
        \\ -\det(G) D_p^2W(\cdot, \nabla (0_d+ \tilde{f}) G^{-1})[\nabla \varphi G^{-1}]G^{-T}\n
    \end{bmatrix}.
\end{equation}

\begin{proposition}\label{proposition the linearized operator is a homeomorphism between Banach spaces}
    There exist $R_G^2,R_f^2 >0$ such that for all $G \in B_{R_G^2}(\mathbf{1}_d)$ and $f \in B_{R_f^2}(\id)$, the operator $\mathcal{L}_G:C_{0,\Gamma_D}^{2, \mu}(\overline{\Omega}, \RR^d) \to C^{0, \mu}(\overline{\Omega}, \RR^d) \times C^{1,\mu}(\Gamma_N,\RR^d)$ is a Banach space isomorphism. Moreover, the operator norm of its inverse, $\mathcal{L}_G^{-1}:C^{0,\mu}(\overline{\Omega},\RR^d) \times C^{1,\mu}(\Gamma_N,\RR^d) \to C^{2,\mu}_{0,\Gamma_D}(\overline{\Omega},\RR^d)$, is uniformly bounded in $G \in B_{R_G^2}(\mathbf{1}_d)$ and $f \in B_{R_f^2}(\id)$, i.e., there exist constants $C_{\mathcal{L}}>0$ and $C_{\mathcal{L}^{-1}}>0$ such that for all $G\in B_{R_G^2}(\mathbf{1}_d)$ and for all $f \in B_{R_f^2}(\id)$
    \begin{equation*}
        \norm{\mathcal{L}_G}_{\mathcal{L}(C^{2,\mu}_{0,\Gamma_D}(\overline{\Omega},\RR^d),C^{0,\mu}(\overline{\Omega},\RR^d) \times C^{1,\mu}(\Gamma_N,\RR^d))}\leq C_{\mathcal{L}}
    \end{equation*}
    and
    \begin{equation*}
        \norm{\mathcal{L}_G^{-1}}_{\mathcal{L}(C^{0,\mu}(\overline{\Omega},\RR^d) \times C^{1,\mu}(\Gamma_N,\RR^d),C^{2,\mu}_{0,\Gamma_D}(\overline{\Omega},\RR^d))}\leq C_{\mathcal{L}^{-1}}.
    \end{equation*}
\end{proposition}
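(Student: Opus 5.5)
Our plan is a perturbation argument. We first treat the reference operator $\mathcal{L}_{\mathbf{1}_d}$ with $f=\id$, so that $\tilde f=\id$, and then use Proposition \ref{Proposition Neumann series argument for uniform boundedness of inverse} to pass to nearby $G$ and $f$.

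\textbf{Reference operator.} For $G=\mathbf{1}_d$ and $\tilde f=\id$, the coefficient tensor is $A(x)=D_p^2W(x,\mathbf{1}_d)$, which lies in $C^{1,\mu}$ by \textbf{(W1)}.

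\emph{Step 1: coercivity.} From \textbf{(W2)}, \textbf{(W3)} and $W\geq 0$, the function $F\mapsto W(x,F)-c_W\dist(F,\SO{d})^2$ has a minimum at $\mathbf{1}_d$. Near $\mathbf{1}_d$, $\dist(F,\SO{d})^2=|\sym(F-\mathbf{1}_d)|^2+O(|F-\mathbf{1}_d|^3)$. Since $D_pW(x,\mathbf{1}_d)=0$, a second-order expansion gives
\[
A(x)[H,H]\geq 2c_W|\sym H|^2 \quad \textup{for all } H\in\RR^{d\times d}.
\]
Korn's inequality, Proposition \ref{proposition Korn Pompe Neff} with $F=\mathbf{1}_d$, then yields
\[
\int_\Omega A[\nabla v,\nabla v]\,dx\geq \nu\norm{v}_{H^1}^2
\]
for all $v\in H^1$ with $v|_{\Gamma_D}=0$.

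\emph{Step 2: invertibility.} By Lax--Milgram and Theorem \ref{Theorem existence, regularity and uniqueness linear elliptic systems}, $\mathcal{L}_{\mathbf{1}_d}$ is a Banach space isomorphism. Theorem \ref{Theorem existence, regularity and uniqueness linear elliptic systems} is stated only for homogeneous Neumann data. For data $(h,k)\in C^{0,\mu}\times C^{1,\mu}(\Gamma_N)$ we will either invoke the general Agmon--Douglis--Nirenberg estimate (Theorem 9.3 there) with inhomogeneous boundary data, or first lift $k$: choose $w\in C^{2,\mu}_{0,\Gamma_D}$ with conormal derivative equal to $k$ on $\Gamma_N$ and $\norm{w}_{C^{2,\mu}}\leq C\norm{k}_{C^{1,\mu}}$. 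This lifting is possible because $\Gamma_D$ and $\Gamma_N$ are separated, so one can use local charts and a cutoff, and the conormal $A\n$ is invertible by Legendre--Hadamard/strong ellipticity at the boundary. The key estimate we obtain is $\norm{\varphi}_{C^{2,\mu}}\leq C_0\norm{\mathcal{L}_{\mathbf{1}_d}\varphi}$, with injectivity coming from Lax--Milgram uniqueness. We expect this step to be the main obstacle, i.e.\ the inhomogeneous mixed boundary problem in Schauder spaces, since the cited theorem covers only homogeneous Neumann data.

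\textbf{Uniform bound on $\mathcal{L}_G$.} Write the coefficients as
\[
A_{G,f}[H]=\det(G)\,D_p^2W(\cdot,\nabla\tilde f\,G^{-1})[HG^{-1}]G^{-T}.
\]
We will show
\[
\norm{A_{G,f}-A}_{C^{1,\mu}}\leq C\bigl(\norm{G-\mathbf{1}_d}_{C^{1,\mu}}+\norm{f-\id}_{C^{2,\mu}}\bigr).
\]
This uses Proposition \ref{proposition composition in Holder spaces} with $m=1$: here $D_p^2W\in C^3$, which suffices; inversion and the determinant are smooth; and by Remark \ref{remark shifted operator}, $\nabla\tilde f G^{-1}$ stays in a compact subset of $B_R(\mathbf{1}_d)$ for small radii. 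Products in $C^{1,\mu}$, together with the fact that divergence and trace are bounded from $C^{1,\mu}$, then give
\[
\norm{\mathcal{L}_G-\mathcal{L}_{\mathbf{1}_d}}_{\op}\leq C'\bigl(\norm{G-\mathbf{1}_d}_{C^{1,\mu}}+\norm{f-\id}_{C^{2,\mu}}\bigr).
\]
The same estimate gives the uniform bound $C_{\mathcal{L}}$.

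\textbf{Invertibility of $\mathcal{L}_G$.} Choose $R_G^2$ and $R_f^2$ so small that
\[
C'(R_G^2+R_f^2)\leq \tfrac12\,\norm{\mathcal{L}_{\mathbf{1}_d}^{-1}}^{-1}.
\]
This is the Neumann series condition, and it is the correct form: Proposition \ref{Proposition Neumann series argument for uniform boundedness of inverse} should be read with $\norm{T^{-1}}^{-1}$ in place of $\norm{T}^{-1}$, and since $\norm{T^{-1}}^{-1}\geq\norm{T}^{-1}$ always holds, the smallness required by the proposition as stated is harmless. The proposition then shows that $\mathcal{L}_G$ is invertible with
\[
\norm{\mathcal{L}_G^{-1}}\leq 2\norm{\mathcal{L}_{\mathbf{1}_d}^{-1}}=:C_{\mathcal{L}^{-1}},
\]
uniformly in $G$ and $f$.
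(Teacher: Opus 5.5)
Your proof is correct, and it takes a different route from the paper's.

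\textbf{How the two arguments differ.} The paper works with each admissible $G$ and $f$ separately. It proves coercivity of $\int_\Omega \det(G)D_p^2W(\cdot,\nabla\tilde f G^{-1})[\nabla\psi G^{-1},\nabla\psi G^{-1}]\,dx$ using a Taylor expansion of $D_p^2W$ about $\mathbf{1}_d$, the bound $|B+B^T|^2\le \frac{2}{c_W}D_p^2W(\cdot,\mathbf{1}_d)[B,B]$, and the Pompe--Neff Korn inequality to handle the factor $G^{-1}$. It then applies Theorem~\ref{Theorem existence, regularity and uniqueness linear elliptic systems} to each $\mathcal{L}_G$ to get bijectivity. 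Only at the end does it appeal to the Neumann series, for the uniform bound on $\mathcal{L}_G^{-1}$.

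You prove coercivity and invertibility only for the reference operator ($G=\mathbf{1}_d$, $f=\id$), where classical Korn suffices. Invertibility and the uniform inverse bound then follow together from the explicit estimate $\norm{\mathcal{L}_G-\mathcal{L}_{\mathbf{1}_d}}\le C'(\norm{G-\mathbf{1}_d}_{C^{1,\mu}}+\norm{f-\id}_{C^{2,\mu}})$.

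\textbf{What your route buys.}
\begin{enumerate}
\item The coefficient-dependent Schauder constant is used only once, so no uniformity question arises.
\item The operator-norm closeness that the paper's final Neumann-series step needs, but never states, is actually proved.
\item Your coercivity holds on $\{v\in H^1:\ v|_{\Gamma_D}=0\}$, which is what Lax--Milgram for the mixed problem requires. The paper only records it on $H^1_0$, which gives Legendre--Hadamard ellipticity but not solvability with traction data.
\item You notice that the cited elliptic theorem covers only homogeneous traction data, although surjectivity onto $C^{0,\mu}\times C^{1,\mu}(\Gamma_N)$ is claimed.
\end{enumerate}
The paper's per-$G$ coercivity would give weak solvability under mere $L^\infty$-closeness of $\nabla\tilde f G^{-1}$ to $\mathbf{1}_d$. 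For the stated $C^{2,\mu}$ result, however, it is redundant once a Neumann series is used.

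\textbf{Two corrections.}
\begin{enumerate}
\item Your justification that the paper's Neumann-series hypothesis is harmless is wrong. The inequality $\norm{T^{-1}}^{-1}\ge\norm{T}^{-1}$ fails for $T=\tfrac12 I$ on $\RR$. Taking $S=0$ and $\lambda=\tfrac14$ in that example also shows that Proposition~\ref{Proposition Neumann series argument for uniform boundedness of inverse} is false as stated. You apply the correct condition $\norm{S-T}\le\tfrac12\norm{T^{-1}}^{-1}$ directly, so simply delete that sentence.
\item Lifting the traction datum $k$ needs more than charts and a cutoff. If $\delta$ is the distance to $\partial\Omega$ and $\tilde k$ is an arbitrary $C^{1,\mu}$ extension of $k$, then $\delta\,\tilde k$ is in general only $C^{1,\mu}$, because of the term $\delta\, D^2\tilde k$. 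You can fix this in either of two ways:
\begin{itemize}
\item use a regularizing extension, for example mollifying $k$ at a scale comparable to $\delta$;
\item use the Agmon--Douglis--Nirenberg estimates with inhomogeneous boundary data. In that case the complementing condition on $\Gamma_N$ must be checked; it can be derived from your Step~1 coercivity applied to functions supported near $\Gamma_N$.
\end{itemize}
\end{enumerate}
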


\begin{proof}
    Let $G \in B_{R_G^2}(\mathbf{1}_d)$ with $R_G^2$ sufficiently small. Then, taking into account assumption \textbf{(W1)},
    \begin{align*}
        & \norm{\mathcal{L}_G}_{\mathcal{L}(C^{2,\mu}_{0,\Gamma_D}(\overline{\Omega},\RR^d),C^{0,\mu}(\overline{\Omega},\RR^d) \times C^{1,\mu}(\Gamma_D,\RR^d))}
        \\ &=  \sup_{\varphi \in C^{2,\mu}_{0,\Gamma_D}(\overline{\Omega},\RR^d); \norm{\varphi}_{C^{2,\mu}_{0,\Gamma_D}(\overline{\Omega},\RR^d)}=1} (\norm{-\Div(\det(G) D_p^2W(\cdot, \nabla (0_d+ \tilde{f}) G^{-1})[\nabla \varphi G^{-1}]G^{-T})}_{C^{0,\mu}(\overline{\Omega},\RR^d)}
        \\ &\quad + \norm{-\det(G) D_p^2W(\cdot, \nabla (0_d+ \tilde{f}) G^{-1})[\nabla \varphi G^{-1}]G^{-T})\n}_{C^{1,\mu}(\Gamma_N,\RR^d)})
        \\ &\leq  C\sup_{\varphi \in C^{2,\mu}_{0,\Gamma_D}(\overline{\Omega},\RR^d); \norm{\varphi}_{C^{2,\mu}_{0,\Gamma_D}(\overline{\Omega},\RR^d)}=1} \norm{\det(G) D_p^2W(\cdot, \nabla (0_d+ \tilde{f}) G^{-1})[\nabla \varphi G^{-1}]G^{-T}}_{C^{1,\mu}(\overline{\Omega},\RR^{d \times d})}
        \\ &\leq  C_{\mathcal{L}}.
    \end{align*}
    We proceed by showing that the operator $\mathcal{L}_G$ is elliptic in the sense of Definition \ref{definition Legendre-Hadamard ellipticity}. For $\varphi \in C_0^{2, \mu}(\overline{\Omega}, \RR^d)$, the coefficients $A_{ij}^{\alpha\beta}:\Omega\to\RR$ of the equation $\mathcal{L}_G^1[\varphi]=0$
    are given by
    \begin{equation*}
        A_{ij}^{\alpha\beta}=\sum_{l,m=1}^d \det(G)D_p^2W(\cdot,\nabla (0_d+ \tilde{f})G^{-1})_{ij}^{lm}(G^{-1})^{\beta}_m (G^{-1})^{\alpha}_l.
    \end{equation*} 
    Solving $\mathcal{L}_G^1[\varphi]=0$ is equivalent to solving
    \begin{equation*}
        -\biggl(\sum_{\alpha,\beta,j=1}^d \partial_{\alpha} (A_{ij}^{\alpha\beta}\partial_\beta \varphi^j)\biggr)_{i=1,\dots, d}=0_d.
    \end{equation*}
    According to Proposition~\ref{Proposition Coercivity implies ellipticity}, it suffices to show that there exists $\nu>0$, such that for all $\psi\in H_0^1(\Omega,\RR^d)$
    \begin{equation*}
        \int_{\Omega} \sum_{\alpha,\beta,i,j=1}^n A_{ij}^{\alpha\beta}\partial_\beta \psi^j \partial_\alpha \psi^i dx \geq \nu\int_{\Omega} |\nabla \psi|^2dx.
    \end{equation*}
    According to Assumption \textbf{(W1)} and Taylor's theorem, cf. Theorem 4.A in \cite{Zeidler.1986}, we can write for $A \in B_{R}(\mathbf{1}_d)$ and $B \in \GLp{d}$ and some constant $C_1>0$
    \begin{align*}
        & D_p^2W(\cdot,A)[B,B]=  D_p^2W(\cdot,\mathbf{1}_d)[B,B] + D_p^3W(\cdot,\mathbf{1}_d)[A-\mathbf{1}_d,B,B]+O(|A-\mathbf{1}_d|^2|B|^2)
        \\ & \geq  D_p^2W(\cdot,\mathbf{1}_d)[B,B] - C_1 |A-\mathbf{1}_d||B|^2+O(|A-\mathbf{1}_d|^2|B|^2).
    \end{align*}
    Moreover, if $B \in B_{R}(\mathbf{0}_d)$, we obtain by Taylor's theorem and \textbf{(W2)}, that for $\varepsilon \in (0,1)$
    \begin{align*}
        & W(\cdot,\mathbf{1}_d+\varepsilon B) =  W(\cdot,\mathbf{1}_d) + \varepsilon D_pW(\cdot,\mathbf{1}_d)[B] +\frac{\varepsilon^2}{2} D_p^2W(\cdot,\mathbf{1}_d)[B,B] + O(\varepsilon^3|B|^3)
        \\&  =  \frac{\varepsilon^2}{2} D_p^2W(\cdot,\mathbf{1}_d)[B,B] + O(\varepsilon^3|B|^3).
    \end{align*}
    Due to Assumption \textbf{(W2)}
    \begin{equation*}
        c_W\textup{dist}(\mathbf{1}_d+\varepsilon B,\SO{d})^2 \leq \frac{\varepsilon^2}{2} D_p^2W(\cdot,\mathbf{1}_d)[B,B] + O(\varepsilon^3|B|^3).
    \end{equation*}
    We compute the polar decomposition of $\mathbf{1}_d+\varepsilon B$ and obtain by linearization around the identity, cf. (3.20) in \cite{Friesecke.James.Mueller.2002}, that
    \begin{equation*}
        \textup{\dist}(\mathbf{1}_d+\varepsilon B,\SO{d})= \frac{\varepsilon}{2} |B+B^T| + O(\varepsilon^2|B|^2).
    \end{equation*}
    Dividing by $\varepsilon$ and letting $\varepsilon$ converge to zero, we obtain that
    \begin{equation*}
        |B+B^T|^2 \leq \frac{2}{c_W}D_p^2W(\cdot,\mathbf{1}_d)[B,B].
    \end{equation*}
    Let $\psi \in C_c^{\infty}(\Omega,\RR^d)$ such that $\norm{\nabla \psi G^{-1}}_{L^{\infty}(\Omega,\RR^{d \times d})} < R$. By Proposition \ref{proposition Korn Pompe Neff}, for a constant $C>0$ depending solely on $\Omega$, $c_W$ and the Korn constant $C(K)>0$, we can estimate
    \begin{align*}
        & \int_{\Omega}\sum_{\alpha,\beta,i,j=1}^n A_{ij}^{\alpha\beta}\partial_\beta \psi^j \partial_\alpha \psi^i dx
        \\ &= \int_{\Omega} \det(G)D_p^2W(\cdot,\nabla(\mathbf{0}_d+ \tilde{f})G^{-1})[\nabla \psi G^{-1},\nabla \psi G^{-1}]dx
        \\ &\geq  \int_{\Omega} \det(G)D_p^2W(\cdot,\mathbf{1}_d)[\nabla \psi G^{-1},\nabla \psi G^{-1}]-C_1\det(G) |\nabla(\mathbf{0}_d+ \tilde{f})G^{-1}-\mathbf{1}_d||\nabla \psi G^{-1}|^2 
        \\ &\quad + O(|\nabla(\mathbf{0}_d+ \tilde{f})G^{-1}-\mathbf{1}_d|^2|\nabla \psi G^{-1}|^2) dx
        \\ &\geq  \int_\Omega \frac{c_W}{2}\det(G)|(\nabla \psi G^{-1})^T+\nabla \psi G^{-1}|^2-C_1 \det(G) |\nabla \tilde{f}G^{-1}-\mathbf{1}_d||\nabla \psi G^{-1}|^2  
        \\ &\quad + O(|\nabla(\mathbf{0}_d+ \tilde{f})G^{-1}-\mathbf{1}_d|^2|\nabla \psi G^{-1}|^2) dx
        \\ &\geq  C \int_{\Omega} |\nabla \psi|^2 -C_1 |\nabla\tilde{f}G^{-1}-\mathbf{1}_d||\nabla \psi G^{-1}|^2  + O(|\nabla(\mathbf{0}_d+ \tilde{f})G^{-1}-\mathbf{1}_d|^2|\nabla \psi G^{-1}|^2) dx
        \\ &\geq  C\int_{\Omega} |\nabla \psi|^2dx.
    \end{align*}
    In the last step, we took into account that, for sufficiently small $R_G^2, R_f^2$, $G \in B_{R_G^2}(\mathbf{1}_d)$ and $f \in B_{R_G^2}(\mathbf{1}_d)$, $\tilde{f}$ as well as $G^{-1}$ are close to the identity by construction of $\tilde{f}$, cf. Remark \ref{remark shifted operator}. 
    By a scaling argument and approximation by compactly supported and smooth functions, we obtain that there exists a constant $\nu>0$ such that for all $\psi \in H_0^1(\Omega,\RR^d)$
    \begin{equation*}
        \int_{\Omega} \sum_{\alpha,\beta,i,j=1}^d A_{ij}^{\alpha\beta}\partial_\beta \psi^j \partial_\alpha \psi^i dx\geq \nu\int_{\Omega} |\nabla \psi|^2dx. 
    \end{equation*}
    Therefore, according to Theorem \ref{Theorem existence, regularity and uniqueness linear elliptic systems}, $\mathcal{L}_G:C^{2,\mu}_{0,\Gamma_D}(\overline{\Omega},\RR^{d \times d})\to C^{0,\mu}(\overline{\Omega},\RR^{d \times d})\times C^{1,\mu}(\Gamma_N,\RR^d)$ is bijective. The statement follows by Proposition \ref{Proposition Neumann series argument for uniform boundedness of inverse} which guarantees that, for sufficiently small $R_G^2>0$, the operator norm of the inverse is bounded uniformly in $G \in B_{R_G^2}(\mathbf{1}_d)$ and $f \in B_{R^2_f}(\id)$.
\end{proof}

Now, we want to prove existence of stationary solutions with the contraction principle. We first reformulate the equation \eqref{equation shifted operator quasi-static momentum balance equation} as a fixed-point problem. As the existence of $\mathcal{L}_G^{-1}$, for $\mathcal{L}_G$ as in \eqref{equation definition linearized operator}, is guaranteed according to Proposition \ref{proposition the linearized operator is a homeomorphism between Banach spaces}, rearranging yields that for $u \in B_{R_y^2}(0_d) \subset C^{2,\mu}_{0,\Gamma_D}(\overline{\Omega},\RR^d)$
\begin{align}\nonumber
    -\Div_1(\mathcal{A}^0_G(\nabla u)) = \begin{bmatrix}
        0_d \\ 0_d
        \end{bmatrix} &\iff \mathcal{L}_G[u]-\Div_1(\mathcal{A}^0_G(\nabla u)) = \mathcal{L}_G[u] \\\label{equation equation equivalent to fixed point problem}
  &\iff  \mathcal{L}_G^{-1}[\mathcal{L}_G[u]-\Div_1(\mathcal{A}^0_G(\nabla u))] = u.
\end{align}

We start by showing that the operator $\mathcal{L}_G^{-1}[\mathcal{L}_G(\cdot)-\Div(\mathcal{A}_G^0)(\nabla \cdot)]$ restricted to a sufficiently small closed ball around zero is a contraction.

\begin{lemma}\label{lemma contraction}
    For $R_G^3, R_f^3>0$ sufficiently small, there exists a constant $R_y^3>0$ such that for all $G \in B_{R_G^3}(\mathbf{1}_d)$ and for all $f \in B_{R_f^3}(\id)$ the operator $\mathcal{L}_G^{-1}[\mathcal{L}_G-\Div(\mathcal{A}^0_G(\nabla \cdot))]: B_{R_y^3}(0_d) \to C^{2, \mu}(\overline{\Omega}, \RR^d)$ is a contraction, i.e., $\mathcal{L}_G^{-1}[\mathcal{L}_G-\mathcal{A}^0_G(\nabla \cdot)]$ is Lipschitz continuous with Lipschitz constant strictly smaller than $1$. Moreover, the choice of $R_y^3$ is uniform in $R_G^3$ and $R_f^3$ for $R_G^3$ and $R_f^3$ sufficiently small, that is, for all $\tilde{R}_G^3 \in (0,R_G^3)$, $\tilde{R}_f^3 \in (0,R_f^3)$, $\tilde{R}_y^3 \in (0,R_y^3)$, for all $G \in B_{\tilde{R}_G^3}(\mathbf{1}_d)$ and for all $f \in B_{\tilde{R}_f^3}(\id)$, $\mathcal{L}_G^{-1}[\mathcal{L}_G-\mathcal{A}^0_G(\nabla \cdot)]$ is a contraction on $B_{\tilde{R}_y^3}(0_d)$.
\end{lemma}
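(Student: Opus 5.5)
Set $\mathcal{T}_G(u):=\mathcal{L}_G^{-1}\bigl[\mathcal{L}_G[u]-\Div_1(\mathcal{A}^0_G(\nabla u))\bigr]$ for $u\in B_{R_y^3}(0_d)\subset C^{2,\mu}_{0,\Gamma_D}(\overline{\Omega},\RR^d)$. The plan is to bound the Lipschitz constant of $\mathcal{T}_G$ by $C_{\mathcal{L}^{-1}}$ times the Lipschitz constant of the remainder $\mathcal{N}_G(u):=\mathcal{L}_G[u]-\Div_1(\mathcal{A}^0_G(\nabla u))$. This remainder is the nonlinear operator minus its linearization at $0_d$. We take $R_G^3\le R_G^2$ and $R_f^3\le R_f^2$, so that Proposition \ref{proposition the linearized operator is a homeomorphism between Banach spaces} gives the bound $\norm{\mathcal{L}_G^{-1}}\le C_{\mathcal{L}^{-1}}$, uniformly in $G$ and $f$. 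Then
\begin{equation*}
\norm{\mathcal{T}_G(u)-\mathcal{T}_G(v)}_{C^{2,\mu}}\le C_{\mathcal{L}^{-1}}\norm{\mathcal{N}_G(u)-\mathcal{N}_G(v)}_{C^{0,\mu}\times C^{1,\mu}(\Gamma_N)}.
\end{equation*}
Since $\Div$ maps $C^{1,\mu}$ to $C^{0,\mu}$ and the trace times $\n$ maps $C^{1,\mu}(\overline{\Omega})$ to $C^{1,\mu}(\Gamma_N)$ boundedly, it suffices to estimate, in $C^{1,\mu}(\overline{\Omega},\RR^{d\times d})$, the difference
\begin{equation*}
\mathcal{A}^{0,1}_G(\nabla u)-\mathcal{A}^{0,1}_G(\nabla v)-D\mathcal{A}^{0,1}_G(0)[\nabla u-\nabla v].
\end{equation*}
The constant $g$ cancels in this difference, and the sign conventions match the definition \eqref{equation definition linearized operator} of $\mathcal{L}_G$.

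By Lemma \ref{Lemma operator governing the elastic deformation is twice continuously Frechet differentiable} and the mean value theorem in Banach spaces, this difference equals
\begin{equation*}
\int_0^1\bigl(D\mathcal{A}^0_G(\nabla v+s\nabla(u-v))-D\mathcal{A}^0_G(0)\bigr)[\nabla(u-v)]\,ds.
\end{equation*}
A second application of the mean value theorem bounds its norm by
\begin{equation*}
\sup_{\norm{\Phi}_{C^{1,\mu}}\le R_y^3}\norm{D^2\mathcal{A}^0_G(\Phi)}\;R_y^3\;\norm{u-v}_{C^{2,\mu}}.
\end{equation*}
The key quantitative input is a bound on $\norm{D^2\mathcal{A}^0_G(\Phi)}$ by a constant $C_2$ that is uniform in $G\in B_{R_G^3}(\mathbf{1}_d)$, $f\in B_{R_f^3}(\id)$ and $\Phi$ in the ball. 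From the explicit formula involving $D_p^3W$, this follows from Proposition \ref{proposition composition in Holder spaces} with $m=1$, applied to $(x,Y,H)\mapsto \det(H)D_p^3W(x,YH^{-1})$, which is $C^2$ by \textbf{(W1)} since $W\in C^5$, together with the Banach algebra property of $C^{1,\mu}$. The compact set $K$ in that proposition is a closed ball around $(\mathbf{1}_d,\mathbf{1}_d)$ of radius smaller than $R$. To guarantee $(\Phi+\nabla\tilde f)G^{-1}\in K$ pointwise, we choose $R_G^3$, $R_f^3$ and $R_y^3$ small. Here we use $\norm{\nabla\tilde f-\mathbf{1}_d}\le C_T\norm{f-\id}$ from Remark \ref{remark shifted operator} and the continuity of inversion near $\mathbf{1}_d$ in $C^{1,\mu}$, which gives $\norm{G^{-1}}_{C^{1,\mu}}\le C$. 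The $C^{1,\mu}$ norms of the arguments are bounded by a fixed $R'$, which yields $C(R',K)$.

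We then pick $R_y^3$ with $C_{\mathcal{L}^{-1}}C_{\Div}C_2R_y^3\le\frac12$, where $C_{\Div}$ is the norm of $\Div_1$ composed with the trace. This gives the contraction with constant at most $\frac12$. The uniformity statement is immediate, because every constant above was obtained as a supremum over the larger balls. Shrinking $G$, $f$ or $u$ to $\tilde R^3_G$, $\tilde R^3_f$, $\tilde R^3_y$ only decreases these suprema, and the segment between two points of $B_{\tilde R_y^3}(0_d)$ stays in the ball by convexity.

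The main obstacle is making sure that the constants are genuinely uniform, in particular that $C_2$ does not depend on $G$ or $f$. This forces the careful choice of $K$ and requires checking that the smallness of $G-\mathbf{1}_d$ and $\tilde f-\id$ in $C^{1,\mu}$, resp. $C^{2,\mu}$, controls the sup-norm condition of Proposition \ref{proposition composition in Holder spaces} on the whole segment. Everything else is routine.
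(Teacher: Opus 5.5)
Your analytic estimate is sound, but one step you explicitly assert is false: the sign conventions do \emph{not} match the definition \eqref{equation definition linearized operator}. There, $\mathcal{L}_G$ is the linearization at $0_d$ of $-\Div_1(\mathcal{A}^0_G(\nabla\cdot))$, i.e.\ $\mathcal{L}_G[w]=-\Div_1(D\mathcal{A}^0_G(\mathbf{0}_d)[\nabla w])$ in both components. Hence
\begin{equation*}
    \mathcal{N}_G(u)-\mathcal{N}_G(v)=-\Div_1\bigl(\mathcal{A}^0_G(\nabla u)-\mathcal{A}^0_G(\nabla v)+D\mathcal{A}^0_G(\mathbf{0}_d)[\nabla u-\nabla v]\bigr).
\end{equation*}
So the linear parts add rather than cancel, and $\mathcal{N}_G$ is not ``the nonlinear operator minus its linearization''. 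Indeed $D\mathcal{T}_G(0_d)=2\,\mathrm{Id}$, and by your own remainder bound $\mathcal{T}_G$ has Lipschitz constant close to $2$ on small balls, so it is not a contraction. This slip is already in the paper. Its proof drops the linear terms in exactly the same way, right after writing $\mathcal{L}_G[u]=-\Div_1(D\mathcal{A}^0_G(\nabla u_0)[\nabla u])$. The map that is a contraction, with the same fixed points, is
\begin{equation*}
    u\mapsto \mathcal{L}_G^{-1}\bigl[\mathcal{L}_G[u]+\Div_1(\mathcal{A}^0_G(\nabla u))\bigr]=u-\mathcal{L}_G^{-1}\bigl[-\Div_1(\mathcal{A}^0_G(\nabla u))\bigr].
\end{equation*}
You should state this correction explicitly instead of claiming agreement; with it, the rest of your argument applies verbatim.

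With that fixed, your route to the key estimate differs from the paper's, and favorably. The paper expands $\mathcal{A}^0_G$ to second order at $\mathbf{0}_d$ with a remainder $\mathcal{R}_G(\nabla u)[\nabla u,\nabla u]$. It then splits the difference of two remainders into $(\mathcal{R}^1_G(\nabla u_2)-\mathcal{R}^1_G(\nabla u_1))[\nabla u_2,\nabla u_2]$ plus a polarized term. This needs Lipschitz continuity of $\nabla u\mapsto\mathcal{R}^1_G(\nabla u)$ in $C^{1,\mu}$, which the paper uses without comment. Via the second part of Proposition \ref{proposition composition in Holder spaces}, that would require $D_p^4W$ to be $C^2$, one derivative beyond \textbf{(W1)}.

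Your integral mean-value representation, followed by a mean-value inequality for $D\mathcal{A}^0_G$, needs only a uniform bound on $D^2\mathcal{A}^0_G$. That is the first part of the same proposition applied to a function built from $D_p^3W\in C^2$, which is exactly what $W\in C^5$ gives. You also make the uniformity in $G$ and $f$ explicit (the compact set $K$, the $C^{1,\mu}$ bound on $G^{-1}$), which the paper only asserts. One small precision: the radius of $K$ must be small enough that $YH^{-1}$ stays in $B_R(\mathbf{1}_d)$ for all $(Y,H)\in K$, not merely smaller than $R$.
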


\begin{proof}
    Let $R_G^3, R_f^3, R_y^3>0$ be sufficiently small, $f \in B_{R_f^3}(\id)$, $G \in B_{R_G^3}(\mathbf{1}_d)$ and $u_1, u_2 \in B_{R_y^3}(0_d)$. In particular, for all $u \in B_{R_y^2}(0_d) \subset C^{2,\mu}_{0,\Gamma_D}(\overline{\Omega},\RR^{d \times d})$
    \begin{equation*}
        \mathcal{L}_G[u] = - \Div_1(D\mathcal{A}_G^0(\nabla u_0)[\nabla u]).
    \end{equation*}
    According to Proposition \ref{proposition the linearized operator is a homeomorphism between Banach spaces}, if $R_G^3$ and $R_f^3$ are sufficiently small,
    \begin{equation*}
        \norm{\mathcal{L}_G^{-1}}_{\mathcal{L}(C^{0,\mu}(\overline{\Omega},\RR^d) \times C^{1,\mu}(\Gamma_N,\RR^d),C^{2,\mu}_{0,\Gamma_D}(\overline{\Omega},\RR^d))}\leq C_{\mathcal{L}^{-1}}
    \end{equation*}
    uniformly in $G \in B_{R_G^3}(\mathbf{1}_d)$ and $f \in B_{R_f^3}(\id)$. By computing the Taylor expansion of $\mathcal{A}_G^0$ around $\nabla u_0= \mathbf{0}_d$, that is, $u_0=0$, we can write for $\mathcal{R}_G:C^{1,\mu}(\overline{\Omega},\RR^{d \times d}) \to C^{1,\mu}(\overline{\Omega},\RR^{d \times d}) \times C^{1,\mu}(\Gamma_D,\RR^d)$ 
    \begin{equation*}
        \mathcal{A}_G^0(\nabla u)=\mathcal{A}_G^0(\nabla u_0) + D\mathcal{A}_G^0(\nabla u_0)[\nabla u - \nabla u_0] + \mathcal{R}_G(\nabla u)[ \nabla u - \nabla u_0,\nabla u - \nabla u_0]
    \end{equation*}
    with 
    \begin{equation*}
        \mathcal{R}_G(\nabla u)[\nabla u - \nabla u_0,\nabla u - \nabla u_0] = \begin{bmatrix} \mathcal{R}_G^1(\nabla u)[\nabla u - \nabla u_0,\nabla u - \nabla u_0]\\ \mathcal{R}_G^2(\nabla u)[\nabla u - \nabla u_0,\nabla u - \nabla u_0]\end{bmatrix}
    \end{equation*} 
    and
    \begin{equation}\label{equation relation between R1 and R2}
        \mathcal{R}_G^2(\nabla u)[\nabla u - \nabla u_0,\nabla u - \nabla u_0]=\mathcal{R}_G^1(\nabla u)[\nabla u - \nabla u_0,\nabla u - \nabla u_0]\n.
    \end{equation}
    Here, we omitted $\nabla u_0$ as an argument in the remainder for notational simplicity. Therefore, we can estimate by \eqref{equation relation between R1 and R2} and Proposition \ref{proposition composition in Holder spaces}, due to which $\mathcal{R}_G$ is bounded on $B_{R_y^3}(0_d)$ uniformly in $G \in B_{R_G^3}(\mathbf{1}_d)$ and $f \in B_{R_f^3}(\id)$,
    \begin{align*}
        & \norm{\mathcal{L}_G^{-1}[\mathcal{L}_G(u_1)-\Div_1(\mathcal{A}^0_G(\nabla u_1))]-\mathcal{L}_G^{-1}[\mathcal{L}_G(u_2)-\Div_1(\mathcal{A}^0_G(\nabla u_2))]}_{C^{2, \mu}(\overline{\Omega}, \RR^d)}
        \\ &\leq  C_{\mathcal{L}^{-1}} \norm{\mathcal{L}_G(u_1)-\Div_1(\mathcal{A}^0_G(\nabla u_1))-(\mathcal{L}_G(u_2)-\Div_1(\mathcal{A}^0_G(\nabla u_2)))}_{C^{0, \mu}(\overline{\Omega}, \RR^d) \times C^{1,\mu}(\Gamma_d,\RR^d)}
        \\ &=  C_{\mathcal{L}^{-1}} \lVert\Div_1(\mathcal{R}_G(\nabla u_2)[\nabla (u_2 - u_0),\nabla (u_2 -  u_0)]
         \\ & \quad - \mathcal{R}_G(\nabla u_1)[\nabla( u_1 - u_0),\nabla( u_1 -  u_0)])\Vert_{C^{0, \mu}(\overline{\Omega}, \RR^d) \times C^{1,\mu}(\Gamma_d,\RR^d)}
        \\ &\leq   C_{\mathcal{L}^{-1}} \lVert\mathcal{R}_G(\nabla u_2)[\nabla (u_2 - u_0),\nabla (u_2 - u_0)]
       \\ & \quad - \mathcal{R}_G(\nabla u_1)[\nabla (u_1 - u_0),\nabla (u_1 -  u_0)]\rVert_{C^{1, \mu}(\overline{\Omega}, \RR^{d \times d}) \times C^{1,\mu}(\Gamma_d,\RR^d)}
        \\ &\leq   C \norm{\mathcal{R}_G^1(\nabla u_2)[\nabla (u_2 -u_0),\nabla( u_2 - u_0)]- \mathcal{R}_G^1(\nabla u_1)[\nabla (u_1 -  u_0),\nabla (u_1 - u_0)]}_{C^{1, \mu}(\overline{\Omega}, \RR^{d \times d})}
        \\ &\leq   C \norm{(\mathcal{R}_G^1(\nabla u_2)-\mathcal{R}_G^1(\nabla u_1))[\nabla (u_2 - u_0),\nabla u_2 - \nabla u_0]}_{C^{1, \mu}(\overline{\Omega}, \RR^{d \times d})} 
        \\ &\quad + \norm{\mathcal{R}_G^1(\nabla u_1)[\nabla (u_2 - 2 u_0 +  u_1),\nabla( u_2 -  u_1)]}_{C^{1, \mu}(\overline{\Omega}, \RR^{d \times d})}
        \\ &\leq   C \norm{\mathcal{R}_G^1(\nabla u_2)-\mathcal{R}_G^1(\nabla u_1)}_{C^{1, \mu}(\overline{\Omega}, \RR^{d^6})} \norm{\nabla (u_2 -  u_0)}_{C^{1, \mu}(\overline{\Omega}, \RR^{d \times d})}^2
        \\ &\quad + \norm{\mathcal{R}_G^1(\nabla u_1)}_{C^{1, \mu}(\overline{\Omega}, \RR^{d^6})}\norm{\nabla (u_2 - 2 u_0 +  u_1)}_{C^{1, \mu}(\overline{\Omega}, \RR^{d \times d})}\norm{\nabla (u_2 -  u_1)}_{C^{1, \mu}(\overline{\Omega}, \RR^{d \times d})}
        \\ &\leq  C ((R_y^3)^2 + R_y^3)\norm{\nabla (u_2 - u_1)}_{C^{1, \mu}(\overline{\Omega}, \RR^{d \times d})}
    \end{align*}
    which becomes small for $R_y^3>0$ sufficiently small. The second assertion follows directly from the proof.
\end{proof}

In the next step we prove that under appropriate small strain assumptions, the operator $\mathcal{L}_G^{-1}[\mathcal{L}_G-\Div_1(\mathcal{A}^0_G(\nabla \cdot))]$ is a self-mapping.

\begin{lemma}\label{lemma self mapping}
    Let $R_G^3, R_f^3$ and $R_y^3$ be as in Lemma \ref{lemma contraction}. Then, there exist constants $R_G^4 \in (0,R_G^3)$, $R_f^4 \in (0,R_f^3)$, $R_y^4 \in (0,R_y^3)$ and $R_g^4>0$ such that for all $G\in B_{R_G^4}(\mathbf{1}_d)$, $f \in B_{R_f^4}(\id)$ and for all $g \in B_{R_g^4}(0_d)$ 
    \begin{equation}\label{equation self mapping}
        \mathcal{L}_G^{-1}[\mathcal{L}_G-\Div_1(\mathcal{A}^0_G(\nabla \cdot))](\overline{B}_{R_y^4}(0_d)) \subseteq \overline{B}_{R_y^4}(0_d).
    \end{equation}
    Moreover, $R_y^4>0$ and $R_f^4>0$ can be chosen arbitrarily small if the other radii are sufficiently small.
\end{lemma}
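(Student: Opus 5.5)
The self-mapping property follows from the contraction estimate of Lemma~\ref{lemma contraction} together with a bound on the image of the origin. Write $\Phi_G(u):=\mathcal{L}_G^{-1}[\mathcal{L}_G[u]-\Div_1(\mathcal{A}^0_G(\nabla u))]$. For $u\in\overline{B}_{R_y^4}(0_d)$ we split
\begin{equation*}
    \norm{\Phi_G(u)}_{C^{2,\mu}(\overline{\Omega},\RR^d)}\leq \norm{\Phi_G(u)-\Phi_G(0_d)}_{C^{2,\mu}(\overline{\Omega},\RR^d)}+\norm{\Phi_G(0_d)}_{C^{2,\mu}(\overline{\Omega},\RR^d)}.
\end{equation*}
By the proof of Lemma~\ref{lemma contraction}, the first term is bounded by $C((R_y^4)^2+R_y^4)R_y^4$. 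Choosing $R_y^4<R_y^3$ small enough that $C((R_y^4)^2+R_y^4)\leq \frac12$ makes this at most $\frac12 R_y^4$. By the uniformity statement in Lemma~\ref{lemma contraction}, this choice does not depend on how much further we shrink $R_G^4$, $R_f^4$ and $R_g^4$.

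It then remains to show $\norm{\Phi_G(0_d)}_{C^{2,\mu}}\leq \frac12 R_y^4$ for $G$ close to $\mathbf{1}_d$, $f$ close to $\id$ and $g$ close to $0_d$. Since $\mathcal{L}_G[0_d]=0$, we have $\Phi_G(0_d)=-\mathcal{L}_G^{-1}\Div_1(\mathcal{A}^0_G(\mathbf{0}_d))$. Proposition~\ref{proposition the linearized operator is a homeomorphism between Banach spaces} then gives
\begin{equation*}
    \norm{\Phi_G(0_d)}_{C^{2,\mu}}\leq C_{\mathcal{L}^{-1}}\Bigl(\norm{\det(G)D_pW(\cdot,\nabla\tilde f G^{-1})G^{-T}}_{C^{1,\mu}(\overline{\Omega},\RR^{d\times d})}+C\norm{\det(G)D_pW(\cdot,\nabla\tilde f G^{-1})G^{-T}}_{C^{1,\mu}}+\norm{g}_{C^{1,\mu}(\Gamma_N,\RR^d)}\Bigr),
\end{equation*}
where the trace onto $\Gamma_N$ is estimated by the $C^{1,\mu}(\overline{\Omega})$ norm.

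The key observation is that $D_pW(\cdot,\mathbf{1}_d)=0$. This holds because, by \textbf{(W2)}--\textbf{(W3)}, $W\geq 0$ and $W(\cdot,\mathbf{1}_d)=0$, so $\mathbf{1}_d$ is an interior minimum of $W(x,\cdot)$ for every $x$. We then apply the second (Lipschitz) assertion of Proposition~\ref{proposition composition in Holder spaces} with $F=D_pW$, $m=1$, $u=\nabla\tilde fG^{-1}$ and $v=\mathbf{1}_d$. This is admissible since $D_pW\in C^4$ by \textbf{(W1)}, and the segment between $u$ and $v$ stays in a compact subset of $B_R(\mathbf{1}_d)$ once the radii are small. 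It yields
\begin{equation*}
    \norm{D_pW(\cdot,\nabla\tilde fG^{-1})}_{C^{1,\mu}}\leq C\norm{\nabla\tilde fG^{-1}-\mathbf{1}_d}_{C^{1,\mu}}\leq C\bigl(C_T\norm{f-\id}_{C^{2,\mu}(\Gamma_D)}+\norm{G-\mathbf{1}_d}_{C^{1,\mu}}\bigr),
\end{equation*}
using Remark~\ref{remark shifted operator}, the algebra property of $C^{1,\mu}$, and the local Lipschitz continuity of inversion near $\mathbf{1}_d$ in $C^{1,\mu}$. The factors $\det(G)$ and $G^{-T}$ are uniformly bounded in $C^{1,\mu}$. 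Altogether,
\begin{equation*}
    \norm{\Phi_G(0_d)}_{C^{2,\mu}}\leq C^\ast\bigl(R_G^4+R_f^4+R_g^4\bigr)
\end{equation*}
with $C^\ast$ independent of the radii. We therefore choose $R_G^4,R_f^4,R_g^4$ so small that $C^\ast(R_G^4+R_f^4+R_g^4)\leq \frac12 R_y^4$, which gives \eqref{equation self mapping}. The last assertion follows because $R_y^4$ may be taken arbitrarily small at the first step, after which the other radii are chosen accordingly.

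The main obstacle is the order of choosing the radii. $R_y^4$ must be fixed first, through the contraction constant, and independently of $R_G^4,R_f^4,R_g^4$. This relies on the uniformity statement of Lemma~\ref{lemma contraction} and on $C_{\mathcal{L}^{-1}}$ being uniform in $G$ and $f$. The genuinely new input is the vanishing of the stress at the identity, which makes $\Phi_G(0_d)$ small, linearly in the distances of the data from the stress-free state.
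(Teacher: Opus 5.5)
Your proof is correct and is essentially the paper's argument. Both split $\Phi_G(u)$ into its value at $u=0_d$ (which involves only $\mathcal{A}^0_G(\mathbf{0}_d)$ and $g$) and a $u$-dependent part. Both then fix $R_y^4$ first, so that the $u$-dependent part is at most $\frac12 R_y^4$ uniformly in $G$, $f$, $g$, and afterwards shrink $R_G^4,R_f^4,R_g^4$ so that the part at $u=0_d$ is at most $\frac12 R_y^4$.

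The differences are minor. You control the $u$-dependent part through the Lipschitz bound of Lemma~\ref{lemma contraction} instead of re-estimating the quadratic Taylor remainder directly. You also make explicit, via $D_pW(\cdot,\mathbf{1}_d)=0$ and Proposition~\ref{proposition composition in Holder spaces}, why $\mathcal{A}^{0}_G(\mathbf{0}_d)$ is small, a point the paper leaves implicit.
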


\begin{proof}
    Let $g \in C^{1,\mu}(\Gamma_N,\RR^d)$. According to Proposition \ref{proposition the linearized operator is a homeomorphism between Banach spaces} and the definition of $R_G^3$ and $R_f^3$, the operator $\mathcal{L}_G^{-1}$ exists for all $G \in B_{R_G^3}(\mathbf{1}_d)$ and $f \in B_{R_f^3}(\id)$ and its norm is bounded by $C_{\mathcal{L}^{-1}}$ uniformly in $G \in B_{R_G^3}(\mathbf{1}_d)$ and $f \in B_{R_f^3}(\id)$. Furthermore, $\mathcal{A}^0_G$ is twice continuously Fréchet-differentiable according to Lemma \ref{Lemma operator governing the elastic deformation is twice continuously Frechet differentiable}. Therefore, we can compute the Taylor expansion of $\mathcal{A}^0_G$ around $\nabla u_0:= \mathbf{0}_d$ that is, for $u_0=0_d$. We can estimate, for $u \in B_{R_y^3}(0_d)$,     
    \begin{align*}
        &\norm{\mathcal{L}_{G}^{-1}[\mathcal{L}_{G}(u)-\Div_1(\mathcal{A}^0_G(\nabla u))]-\mathbf{0}_d}_{C^{2,\mu}(\overline{\Omega}, \RR^d)}
        \\ &\leq   C_{\mathcal{L}^{-1}}\norm{\mathcal{L}_{G}(u)-\Div_1(\mathcal{A}^0_G(\nabla u))-\mathcal{L}_G(u_0)}_{C^{0, \mu}(\overline{\Omega},\RR^d) \times C^{1,\mu}(\Gamma_N,\RR^d)}
        \\ &\leq  C(\norm{\Div_1(\mathcal{A}^0_G(\nabla u_0))}_{C^{0,\mu}(\overline{\Omega},\RR^{d \times d}) \times C^{1,\mu}(\Gamma_N,\RR^d)}+\norm{g}_{C^{1,\mu}(\Gamma_N,\RR^d)}
        \\ &\quad+\norm{\Div_1(\mathcal{R}_G(\nabla u)[\nabla (u- u_0),\nabla( u-u_0)])}_{C^{0,\mu}(\overline{\Omega},\RR^d) \times C^{1,\mu}(\Gamma_N,\RR^d)})
        \\ &\leq  C(\norm{\mathcal{A}^0_G(\nabla u_0)}_{C^{1,\mu}(\overline{\Omega},\RR^{d \times d})}+\norm{g}_{C^{1,\mu}(\Gamma_N,\RR^d)}+\norm{\mathcal{R}^2_G(\nabla u)[\nabla (u- u_0),\nabla (u-u_0)]}_{C^{1,\mu}(\overline{\Omega},\RR^{d \times d})})
        \\ &\leq  C(\norm{\mathcal{A}^0_G(\nabla u_0)}_{C^{1,\mu}(\overline{\Omega},\RR^{d \times d})}+\norm{g}_{C^{1,\mu}(\Gamma_N,\RR^d)}+\norm{\mathcal{R}^2_G(\nabla u)}_{C^{0,\mu}(\overline{\Omega},\RR^{d^6})}\norm{\nabla (u-  u_0)}_{C^{1,\mu}(\overline{\Omega},\RR^{d \times d})}^2)
        \\ &\leq  C(\norm{\mathcal{A}^0_G(\nabla u_0)}_{C^{1,\mu}(\overline{\Omega},\RR^{d \times d})}+\norm{g}_{C^{1,\mu}(\Gamma_N,\RR^d)}+\norm{\mathcal{R}^2_G(\nabla u)}_{C^{0,\mu}(\overline{\Omega},\RR^{d^6})}\norm{u-  u_0}_{C^{2,\mu}(\overline{\Omega},\RR^d)}^2).
    \end{align*}
    We first choose $R_y^4 \in (0, R_y^3)$ sufficiently small such that for all $u \in \overline{B}_{R_y^4}(0_d)$, for all $G \in B_{R_G^3}(\mathbf{1}_d)$ and for all $f \in B_{R_f^3}(\id)$ 
    \begin{equation*}
        C\norm{\mathcal{R}^1_G(\nabla u)}_{C^{0,\mu}(\overline{\Omega},\RR^{d \times d})}\norm{u-  u_0}_{C^{2,\mu}(\overline{\Omega},\RR^d)}^2 \leq \frac{R_y^4}{2}.
    \end{equation*}
    We conclude by choosing $R_G^4 \in (0,R_G^3)$, $R_f^4 \in (0, R_f^3)$ and $R_g^4>0$ sufficiently small such that for all $G \in B_{R_G^4}(\mathbf{1}_d)$, $f \in B_{R_f^4}(\id)$ and for all $g \in B_{R_g^4}(0_d)$
    \begin{equation*}
        C(\norm{\mathcal{A}^0_G(u_0)}_{C^{1,\mu}(\overline{\Omega},\RR^{d \times d})}+\norm{g}_{C^{1,\mu}(\Gamma_N,\RR^d)}) \leq \frac{R_y^4}{2}.
    \end{equation*}
    The fact that $R_y^4>0$ and $R_f^4>0$ can be chosen arbitrarily small follows directly from the proof.
\end{proof}

\begin{corollary}\label{corollary existence and uniqueness of stationary solutions}
    For $R_G^4,R_f^4,R_g^4>0$ as in Lemma \ref{lemma self mapping}, there exists $R_y^5>0$ such that for all $f \in B_{R_f^4}(\id)$, $g \in B_{R_g^4}(0_d)$ and for all $G \in B_{R_G^4}(\mathbf{1}_d)$ there exists a unique solution $y(G) \in B_{R_y^5}(\id)$ of the equation
    \begin{equation}\label{equation momentum balance on Banach space}
        \begin{cases}
            -\Div(\mathcal{A}_G(\nabla y(G)))=0_d & \textup{on } \Omega
            \\ y(G)=f & \textup{on } \Gamma_D
            \\ \mathcal{A}_G(\nabla y(G))\n = g & \textup{on } \Gamma_N.
        \end{cases}
    \end{equation}
    Moreover, $R_y^5$ can be chosen arbitrarily small, if we restrict ourselves to smaller choices of $R_y^4$ and $R_f^4$ in Lemma \ref{lemma self mapping}. 
\end{corollary}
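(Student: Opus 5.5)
The plan is to apply Banach's fixed-point theorem to the map
\begin{equation*}
    \Phi_G(u):=\mathcal{L}_G^{-1}[\mathcal{L}_G[u]-\Div_1(\mathcal{A}^0_G(\nabla u))]
\end{equation*}
on the closed ball $\overline{B}_{R_y^4}(0_d)\subset C^{2,\mu}_{0,\Gamma_D}(\overline{\Omega},\RR^d)$, and then to translate the fixed point back via the shift of Remark \ref{remark shifted operator}.

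The first step is to check that $\overline{B}_{R_y^4}(0_d)$ is a complete metric space and that $\Phi_G$ maps it into $C^{2,\mu}_{0,\Gamma_D}$. Completeness holds because it is a closed subset of the Banach space $C^{2,\mu}_{0,\Gamma_D}(\overline{\Omega},\RR^d)$. The mapping property holds because $\mathcal{L}_G^{-1}$ takes values in that space by Proposition \ref{proposition the linearized operator is a homeomorphism between Banach spaces}. Lemma \ref{lemma self mapping} gives $\Phi_G(\overline{B}_{R_y^4}(0_d))\subseteq\overline{B}_{R_y^4}(0_d)$ for all $G\in B_{R_G^4}(\mathbf{1}_d)$, $f\in B_{R_f^4}(\id)$ and $g\in B_{R_g^4}(0_d)$.

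For the contraction property, note that $R_y^4<R_y^3$ while $R_G^4<R_G^3$ and $R_f^4<R_f^3$. The uniformity statement in Lemma \ref{lemma contraction} then shows that $\Phi_G$ is a contraction on $B_{R_y^4}(0_d)$. This extends to the closed ball, either by continuity of the Lipschitz estimate or by choosing a radius strictly between $R_y^4$ and $R_y^3$. Banach's theorem therefore yields a unique $u\in\overline{B}_{R_y^4}(0_d)$ with $\Phi_G(u)=u$. By \eqref{equation equation equivalent to fixed point problem}, $u$ solves $-\Div_1(\mathcal{A}^0_G(\nabla u))=0$, that is, \eqref{equation shifted operator quasi-static momentum balance equation} together with $u=0$ on $\Gamma_D$ and $\mathcal{A}^{0,2}_G(\nabla u)=0$ on $\Gamma_N$.

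Set $y(G):=u+\tilde f$. Since $u=0$ and $\tilde f=f$ on $\Gamma_D$, Remark \ref{remark shifted operator} shows that $y(G)$ solves \eqref{equation momentum balance on Banach space}. For the bound, Remark \ref{remark shifted operator} gives
\begin{equation*}
    \norm{y(G)-\id}_{C^{2,\mu}}\le R_y^4+C_T\norm{f-\id}_{C^{2,\mu}(\Gamma_D)}< R_y^4+C_TR_f^4=:R_y^5,
\end{equation*}
which can be made arbitrarily small by shrinking $R_y^4$ and $R_f^4$, using the last assertion of Lemma \ref{lemma self mapping}.

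The main obstacle is the precise meaning of uniqueness in $B_{R_y^5}(\id)$. The fixed point is unique only among $y$ with $y-\tilde f\in\overline{B}_{R_y^4}(0_d)$, whereas a solution $y\in B_{R_y^5}(\id)$ only satisfies $\norm{y-\tilde f}\le R_y^5+C_TR_f^4=R_y^4+2C_TR_f^4$. I would close this gap with the uniform contraction radius $R_y^3$ from Lemma \ref{lemma contraction}. First choose $R_y^4$, then shrink $R_f^4$ so that $R_y^4+2C_TR_f^4<R_y^3$. Then every solution $y\in B_{R_y^5}(\id)$ gives a fixed point $y-\tilde f$ of $\Phi_G$ in $B_{R_y^3}(0_d)$, where $\Phi_G$ is a contraction, and two fixed points $u_1,u_2$ there satisfy $\norm{u_1-u_2}\le \lambda\norm{u_1-u_2}$ with $\lambda<1$. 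Hence $u_1=u_2$, and the solution is unique in $B_{R_y^5}(\id)$. This shrinking of $R_f^4$ is consistent with Lemma \ref{lemma self mapping}, since that lemma allows $R_f^4$ to be taken arbitrarily small.
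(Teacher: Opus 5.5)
Your proof is correct and follows the paper's route: Banach's fixed-point theorem on $\overline{B}_{R_y^4}(0_d)$ using Lemmas \ref{lemma contraction} and \ref{lemma self mapping}, then the shift $y(G)=u+\tilde f$ with $R_y^5:=R_y^4+C_TR_f^4$. Your extra step of shrinking $R_f^4$ so that $R_y^4+2C_TR_f^4<R_y^3$ closes a point the paper's proof passes over: the fixed-point argument alone gives uniqueness only among solutions with $y-\tilde f\in\overline{B}_{R_y^4}(0_d)$, not among all solutions in $B_{R_y^5}(\id)$.
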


\begin{proof}
    According to Lemma \ref{lemma contraction}, for $R_G^4,R_f^4,R_g^4>0$ and $R_y^4>0$ as in Lemma \ref{lemma self mapping} and for all $G \in B_{R_G^4}(\mathbf{1}_d)$, $f \in B_{R_G^4}(\id)$ and for all $g \in B_{R_g^4}(0_d)$, the operator $\mathcal{L}_G^{-1}[\mathcal{L}_G-\Div(\mathcal{A}^0_G(\nabla \cdot))]: \overline{B}_{R_y^4}(0_d) \to C^{2, \mu}(\overline{\Omega}, \RR^d)$ is a contraction and, according to Lemma \ref{lemma self mapping}, a self-mapping. By Banach's fixed-point theorem, there exists a unique fixed-point $u \in B_{R_y^4}(0_d)$, that is
    \begin{equation*}
      u=  \mathcal{L}_G^{-1}[\mathcal{L}_G[u]-\Div(\mathcal{A}^0_G(\nabla u))].
    \end{equation*}
    By Remark \ref{remark shifted operator}, $y(G):= u+ \tilde{f}$ satisfies \eqref{equation momentum balance on Banach space} and, by construction of $\tilde{f}$, cf. Remark \ref{remark shifted operator}, for $R_y^5:=R_y^4 + C_TR_f^4$, $y(G) \in B_{R_y^5}(\id)$. According to the construction of $R_y^5$ and Lemma \ref{lemma self mapping}, $R_y^5$ can be chosen arbitrarily small. 
\end{proof}

\begin{proposition}\label{proposition deformation depends Lipschitz continuously on growth tensor}
    For $R_G^4, R_y^5, R_f^4, R_g^4>0$ as in Corollary \ref{corollary existence and uniqueness of stationary solutions}, cf. also Lemma \ref{lemma self mapping}, $f \in B_{R_f^4}(\id)$, $g \in B_{R_g^4}(0_d)$ and $G \in B_{R_G^4}(\mathbf{1}_d)$, we define $y(G) \in B_{R_y^5}(0_d)$ as the unique solution of \eqref{equation momentum balance on Banach space}, which exists according to Corollary \ref{corollary existence and uniqueness of stationary solutions}. The map $\mathfrak{y}: B_{R_G^4}(\mathbf{1}_d) \to B_{R_y^5}(\id)$ defined via $G \mapsto y(G)$ is of the class $C^1$. Moreover, there exist constants $L_{\mathfrak{y}}$ and $M_{\mathfrak{y}}$ such that, for some $R_G^7 \in (0,R_G^4)$, which can by chosen arbitrarily small,
    \begin{equation*}
        \sup_{G\in B_{R_G^7}(\mathbf{1}_d)}\norm{\frac{\partial}{\partial G}\mathfrak{y}(G)}_{\mathcal{L}(C^{1,\mu}(\overline{\Omega},\RR^{d \times d}),C^{2,\mu}_{0,\Gamma_D}(\overline{\Omega},\RR^{d \times d}))}\leq L_{\mathfrak{y}}
    \end{equation*}
    and
    \begin{equation}\label{equation deformation gradient bounded in G}
        \sup_{G \in B_{R_G^7}(\mathbf{1}_d)}\norm{\mathfrak{y}(G)}_{C^{2,\mu}(\overline{\Omega},\RR^d)}\leq M_{\mathfrak{y}}.
    \end{equation}
    In particular, the map $\mathfrak{y}:B_{R_G^7}(\mathbf{1}_d) \to B_{R_y^5}(\id)$ is Lipschitz continuous with Lipschitz constant $L_{\mathfrak{y}}$, i.e., for all $G_1,G_2 \in B_{R_G^7}(\mathbf{1}_d)$
    \begin{equation}\label{equation deformation gradient Lipschitz in time}
        \norm{\mathfrak{y}(G_1)-\mathfrak{y}(G_2)}_{C^{2,\mu}(\overline{\Omega},\RR^d)} \leq L_{\mathfrak{y}} \norm{G_1-G_2}_{C^{0,\mu}(\overline{\Omega},\RR^{d \times d})}.
    \end{equation}
\end{proposition}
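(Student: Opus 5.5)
The plan is to obtain $\mathfrak{y}$ from the implicit function theorem, bound its derivative uniformly, and then integrate along segments. I will work with the shifted unknown $u=y-\tilde f\in C^{2,\mu}_{0,\Gamma_D}(\overline{\Omega},\RR^d)$ and define
\begin{equation*}
    \mathcal{F}:B_{R_G^4}(\mathbf{1}_d)\times B_{R_y^4}(0_d)\to C^{0,\mu}(\overline{\Omega},\RR^d)\times C^{1,\mu}(\Gamma_N,\RR^d),\qquad \mathcal{F}(G,u):=-\Div_1(\mathcal{A}^0_G(\nabla u)).
\end{equation*}
The integrand $(G,Y)\mapsto \det(G)D_pW(\cdot,(Y+\nabla\tilde f)G^{-1})G^{-T}$ is $C^4$ by \textbf{(W1)}. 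Applying the second part of Proposition \ref{proposition composition in Holder spaces} to it and to its first derivatives shows that $\mathcal{F}$ is $C^1$ jointly in $(G,u)$, with $G$ taken in the $C^{1,\mu}$ topology. Note that $\Div$ loses one derivative, so $\mathcal{F}$ really needs $G\in C^{1,\mu}$.

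The partial derivative $D_u\mathcal{F}(G,u)$ is the linearization of Lemma \ref{Lemma operator governing the elastic deformation is twice continuously Frechet differentiable} at $u$. It differs from $\mathcal{L}_G$ in operator norm by at most $C\norm{u}_{C^{2,\mu}}\leq CR_y^4$. By Proposition \ref{proposition the linearized operator is a homeomorphism between Banach spaces} and Proposition \ref{Proposition Neumann series argument for uniform boundedness of inverse}, after shrinking $R_y^4$ if needed, $D_u\mathcal{F}(G,u)$ is invertible with $\norm{D_u\mathcal{F}(G,u)^{-1}}\leq 2C_{\mathcal{L}^{-1}}$, uniformly in $G$. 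The implicit function theorem at each $(G,u(G))$, together with the uniqueness in Corollary \ref{corollary existence and uniqueness of stationary solutions}, then gives $\mathfrak{y}\in C^1$ with
\begin{equation*}
    \frac{\partial}{\partial G}\mathfrak{y}(G)=-D_u\mathcal{F}(G,u(G))^{-1}D_G\mathcal{F}(G,u(G)).
\end{equation*}

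For $L_{\mathfrak{y}}$, I bound $D_G\mathcal{F}$ on $B_{R_G^7}(\mathbf{1}_d)\times \overline B_{R_y^4}(0_d)$ by Proposition \ref{proposition composition in Holder spaces}. I take $R_G^7$ small enough that $G^{-1}$ stays in a fixed compact set of $\GLp{d}$ and the $C^{1,\mu}$ norms are bounded. The bound $M_{\mathfrak{y}}\leq \norm{\id}_{C^{2,\mu}}+R_y^5$ is immediate. Since the ball $B_{R_G^7}(\mathbf{1}_d)$ is convex, the mean value theorem then gives
\begin{equation*}
    \norm{\mathfrak{y}(G_1)-\mathfrak{y}(G_2)}_{C^{2,\mu}}\leq L_{\mathfrak{y}}\norm{G_1-G_2}_{C^{1,\mu}}.
\end{equation*}

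The main obstacle is the stated version of \eqref{equation deformation gradient Lipschitz in time}, which has the weaker norm $\norm{G_1-G_2}_{C^{0,\mu}}$ on the right. I would subtract the two equations. Then $w=\mathfrak{y}(G_1)-\mathfrak{y}(G_2)$ solves a linear system whose averaged coefficients are $C^{1,\mu}$ and uniformly elliptic (by the argument in Proposition \ref{proposition the linearized operator is a homeomorphism between Banach spaces}). Its right-hand side is
\begin{equation*}
    \Div\bigl(\mathcal{A}_{G_1}(\nabla \mathfrak{y}(G_2))-\mathcal{A}_{G_2}(\nabla \mathfrak{y}(G_2))\bigr),
\end{equation*}
together with the matching Neumann datum. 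The difference inside the divergence is bounded in $C^{0,\mu}$ by $C\norm{G_1-G_2}_{C^{0,\mu}}$.

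The first thing I would try is the divergence-form Schauder estimate, which controls $\norm{w}_{C^{1,\mu}}$ by the $C^{0,\mu}$ norm of this flux. To reach the $C^{2,\mu}$ norm, I would expand the divergence of the flux difference via the chain rule into
\begin{equation*}
    \partial_G(\cdots)\,\nabla(G_1-G_2)+(\cdots)(G_1-G_2).
\end{equation*}
The first term requires $\nabla(G_1-G_2)$ in $C^{0,\mu}$, which $\norm{G_1-G_2}_{C^{0,\mu}}$ alone does not control. This is exactly where I expect the argument to be hardest.

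If that term cannot be absorbed, I would check whether the uniform $C^{1,\mu}$ bound on the ball $B_{R_G^7}(\mathbf{1}_d)$ permits an interpolation argument. Failing that, the estimate must be read with the $C^{1,\mu}$ norm, which is the norm of the space on which Picard--Lindel\"of is later applied.
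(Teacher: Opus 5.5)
Your proposal is correct and follows essentially the paper's route (implicit function theorem for $u=y-\tilde f$, a uniform bound on $\frac{\partial}{\partial G}\mathfrak{y}$, mean value theorem on a convex ball). It is in fact more careful than the paper: the paper invokes only the invertibility of $\mathcal{L}_G$, i.e.\ of the linearization at $u=0_d$, and gets $C^1$ merely on a smaller ball, whereas you linearize at the actual solution $(G,u(G))$ with a uniform inverse bound and use uniqueness to globalize.

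The paper's proof likewise derives the final Lipschitz estimate only from the mean value theorem, hence with $\norm{G_1-G_2}_{C^{1,\mu}(\overline{\Omega},\RR^{d \times d})}$ on the right. That is the form actually used in the Picard--Lindel\"of step, so the $C^{0,\mu}$ norm in the statement is a misprint. It cannot hold in general, because the linearized solution map gains only one derivative; test with an eigenstrain $H=h\mathbf{1}_d$ with $h$ rapidly oscillating. So drop the interpolation idea and state the $C^{1,\mu}$ version.
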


\begin{proof}
    We apply the implicit function theorem on Banach spaces to show the statement for the map $G \mapsto y(G)-\tilde{f}$. We define the map $\mathcal{A}^0: C^{1,\mu}(\overline{\Omega},\RR^{d \times d}) \times C^{2,\mu}_{0,\Gamma_D}(\overline{\Omega},\RR^{d}) \to C^{1,\mu}(\overline{\Omega},\RR^{d \times d}) \times C^{1,\mu}(\Gamma_N,\RR^d)$ via  
    \begin{equation*}
        \mathcal{A}^0(G,u):=\begin{bmatrix}
            \det(G)D_pW(\cdot, (\nabla u+\nabla \tilde f)G^{-1})G^{-T}
            \\\det(G)D_pW(\cdot, (\nabla u+\nabla \tilde f)G^{-1})G^{-T}\n-g
        \end{bmatrix}=\begin{bmatrix}
            \mathcal{A}^{0,1}(G,u)
            \\\mathcal{A}^{0,2}(G,u)
        \end{bmatrix}.
    \end{equation*}
    As discussed in Remark \ref{remark shifted operator} and in Corollary \ref{corollary existence and uniqueness of stationary solutions}, for $G \in B_{R_G^4}(\mathbf{1}_d)$, there exists a unique solution $u \in C^{2,\mu}_{0,\Gamma_D}(\overline{\Omega},\RR^d)$ with $u \in B_{R_y^4}(0_d)$ of the equation
    \begin{equation*}
        -\Div_1(\mathcal{A}^0(G, u)) = \begin{bmatrix}
            0_d \\ 0_d
        \end{bmatrix}.
    \end{equation*}
    According to Proposition \ref{proposition the linearized operator is a homeomorphism between Banach spaces}, for all $G \in B_{R_G^4}(\mathbf{1}_d)$
    \begin{equation*}
        \mathcal{L}_G[\cdot]= -\Div_1(D_{u}\mathcal{A}^0(G,0_d)[\cdot])
    \end{equation*}
    is a linear homeomorphism between the space $C^{2,\mu}_{0,\Gamma_D}(\overline{\Omega},\RR^d)$ and $C^{0,\mu}(\overline{\Omega},\RR^d) \times C^{1,\mu}(\Gamma_N,\RR^d)$. Therefore, according to the implicit function theorem on Banach spaces, Theorem 4.B in \cite{Zeidler.1986}, there exists an open ball $B_{R_G^6}(\mathbf{1}_d)$ of radius $R_G^6 \in (0,R_G^4)$ and a function $\mathfrak{y} \in C^1(B_{R_G^6}(\mathbf{1}_d),B_{R_y^5}(\id))$ defined via $\mathfrak{y}:G \mapsto y(G)$. Moreover, by continuity, there exists $R_G^7 \in (0,R_G^6)$ such that 
    \begin{equation*}
        \sup_{G\in B_{R_G^7}(\mathbf{1}_d)}\norm{\frac{\partial}{\partial G}\mathfrak{y}(G)}_{\mathcal{L}(C^{1,\mu}(\overline{\Omega},\RR^{d \times d}),C^{1,\mu}(\overline{\Omega},\RR^{d \times d}))}\leq L_{\mathfrak{y}}.
    \end{equation*}
    and thus \eqref{equation deformation gradient bounded in G}. By the mean value theorem, \eqref{equation deformation gradient Lipschitz in time} holds. 
\end{proof}

\begin{remark}
    One can also show the weaker property that the deformation depends Lipschitz-continuously on the growth tensor directly without having to rely on the implicit function theorem, which would suffice for solving the ordinary differential equation \eqref{equation ODE}.
\end{remark}

\section{The Nutrient Equation}

We proceed by showing existence, regularity, Lipschitz regularity in time and non-negativity of solutions to the nutrient equation.

\begin{proposition}\label{proposition existence of solution to nutrient equation}
    For $R_G^4, R_y^5, R_f^4, R_g^4>0$ as in Corollary \ref{corollary existence and uniqueness of stationary solutions}, cf. also Lemma \ref{lemma self mapping}, $G \in B_{R_G^4}(\mathbf{1}_d)$, $f \in B_{R_f^4}(\id)$, $g \in B_{R_g^4}(0_d)$ and $y(G) \in B_{R_y^5}(\id)$ denoting the unique solution of \eqref{equation momentum balance on Banach space}, which exists according to Proposition \ref{corollary existence and uniqueness of stationary solutions}, there exists a unique solution $N(G) \in C^{2,\mu}(\overline{\Omega})$ of the nutrient equation
    \begin{equation}\label{equation nutrient equation on a Banach space}
        \begin{cases}
            -\sum_{i,j=1}^d \partial_i(D(G,\nabla y(G))_{ij}\partial_j N(G))+\beta(G,y(G))N(G)=0 & \textup{on } \Omega
            \\ N(G)=f_n & \textup{on } \Gamma_D^n
            \\ D(G,\nabla y(G)) \nabla N(G) \cdot \n=g_n & \textup{on } \Gamma_N^n.
        \end{cases}
    \end{equation}
    Moreover, the nutrient concentration is non-negative, that is, $N(G)\geq 0$ on $\overline{\Omega}$.
\end{proposition}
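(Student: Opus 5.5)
We first check that the coefficients are admissible. By Corollary \ref{corollary existence and uniqueness of stationary solutions}, $y(G)\in B_{R_y^5}(\id)$, so $\norm{\nabla y(G)-\mathbf{1}_d}_{C^{1,\mu}}\leq R_y^5$. Since $R_y^5$ and $R_G^4$ can be chosen arbitrarily small, we may assume $R_y^5<R_{\nabla y}^1$ and $R_G^4<R_G^1$. Assumption \textbf{(N1)} then gives
\begin{equation*}
D(G,\nabla y(G))\in C^{1,\mu}(\overline{\Omega},\RR^{d\times d}_{\sym}), \qquad \beta(G,\nabla y(G))\in C^{0,\mu}(\overline{\Omega}),
\end{equation*}
with uniform ellipticity constant $\nu$ and $\beta\geq 0$.

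We then reduce to homogeneous Dirichlet data. Extend $f_n$ to some $F_n\in C^{2,\mu}(\overline{\Omega})$ using a continuous right inverse of the trace, as in Remark \ref{remark shifted operator}. Write $N=F_n+w$ with $w=0$ on $\Gamma_D^n$. Then $w$ solves the analogous problem with right-hand side $h:=\partial_i(D_{ij}\partial_jF_n)-\beta F_n\in C^{0,\mu}(\overline{\Omega})$ and Neumann datum $g_n-D\nabla F_n\cdot\n\in C^{1,\mu}(\Gamma_N^n)$.

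Existence and uniqueness of a weak solution $w\in H^1_{\Gamma_D^n}(\Omega)$ follow from Lax--Milgram. The bilinear form $a(w,v)=\int_\Omega D\nabla w\cdot\nabla v+\beta wv\,dx$ is bounded. Coercivity is the only delicate point:
\begin{itemize}
\item If $\Gamma_D^n\neq\emptyset$, it follows from ellipticity and the Poincaré inequality for functions vanishing on $\Gamma_D^n$.
\item Otherwise, \textbf{(N2)} gives $\beta>0$ on a set $E$ of positive measure. Argue by contradiction and compactness, or use a Poincaré-type inequality $\norm{w}_{L^2}^2\leq C(\norm{\nabla w}_{L^2}^2+\int_E w^2)$. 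This needs $\beta\geq c>0$ on a set of positive measure; since $\beta$ is continuous, we get that on a smaller positive-measure set.
\end{itemize}
The functional $v\mapsto \int hv+\int_{\Gamma_N^n}(\ldots)v\,dS$ is continuous on $H^1$, so a unique weak solution exists.

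Regularity $w\in C^{2,\mu}(\overline{\Omega})$ follows from the Schauder theory for mixed problems (scalar case of Agmon--Douglis--Nirenberg, as in Theorem \ref{Theorem existence, regularity and uniqueness linear elliptic systems}). This is legitimate because $\Gamma_D^n$ and $\Gamma_N^n$ are disjoint closed unions of components of $\partial\Omega$, so the boundary conditions do not meet. Uniqueness in $C^{2,\mu}$ follows from uniqueness of the weak solution.

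Non-negativity is the main step. Write $L N:=-\partial_i(D_{ij}\partial_jN)+\beta N$, so $LN=0$ with $\beta\geq0$; expanding, $L$ has a first-order term $-\partial_iD_{ij}\partial_j$ with continuous coefficients. Suppose $m:=\min_{\overline{\Omega}}N<0$. The weak maximum principle for $c=\beta\geq0$ (Gilbarg--Trudinger Cor.\ 3.2) shows that the negative minimum is attained on $\partial\Omega$ at some $x_0$.
\begin{itemize}
\item If $x_0\in\Gamma_D^n$, then $N(x_0)=f_n(x_0)\geq0$, a contradiction.
\item If $x_0\in\Gamma_N^n$, assume first that $N$ is nonconstant. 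Hopf's lemma, which needs an interior sphere condition (supplied by the $C^{2,\mu}$ boundary) and $c\geq0$ with a negative minimum, gives $\partial_\nu N(x_0)<0$ for the outward conormal direction $D\n$. But $D\nabla N\cdot\n=g_n\geq0$ there, a contradiction.
\item If $N\equiv m<0$ is constant, then $\beta m=0$ forces $\beta\equiv0$, and $\Gamma_D^n=\emptyset$ (else $m=f_n\geq0$). This contradicts \textbf{(N2)}.
\end{itemize}
Hence $N(G)\geq0$.

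An alternative that avoids Hopf is a weak argument: test the equation with $N^-=\max(-N,0)\in H^1$, which vanishes on $\Gamma_D^n$ because $f_n\geq0$. Using $g_n\geq0$, this gives $a(N^-,N^-)\leq0$, so $N^-=0$ by the coercivity established above. I would present this weak argument as the primary proof, since it reuses the coercivity and avoids the boundary-regularity hypotheses of Hopf's lemma.
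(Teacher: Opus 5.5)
Your proposal is correct, and most of it matches the paper. The Lax--Milgram step, with coercivity from either $\Gamma_D^n\neq\emptyset$ or $\beta>0$ on a set of positive measure, and the Schauder theory for the separated mixed problem are the paper's argument written out in more detail. Your weak-maximum-principle-plus-Hopf argument for $N(G)\geq 0$ is exactly the route the paper's one-line proof invokes.

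The truncation argument you want to make primary is a genuinely different route to non-negativity. Testing the weak formulation with $N^-$ gives $-a(N^-,N^-)=\int_{\Gamma_N^n} g_n N^-\,dS\geq 0$, and coercivity of $a$ on $H^1_{\Gamma_D^n}(\Omega)$ finishes the proof. Its advantages:
\begin{itemize}
\item It works at the $H^1$ level; continuity of $N$ is needed only to upgrade $N\geq 0$ a.e.\ to $N\geq 0$ on $\overline{\Omega}$.
\item It reuses the coercivity you already proved for Lax--Milgram, so the case $\Gamma_D^n=\emptyset$ is handled automatically through $\beta>0$ on a set of positive measure.
\item It needs no interior sphere condition, no conormal version of Hopf's lemma, and no separate treatment of the constant case.
\end{itemize}

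The classical route needs all of these. It also needs the strong maximum principle, which your Hopf step uses without saying so. To apply Hopf's lemma at $x_0\in\Gamma_N^n$ you need $N>m$ throughout $\Omega$, and for nonconstant $N$ on the connected $\Omega$ this is exactly what the strong maximum principle gives. You should cite it explicitly if you keep that argument.

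A small aside: you do not need continuity of $\beta$ to find a set of positive measure on which $\beta\geq c>0$, since the sets $\{\beta>1/k\}\cap E$ exhaust $E$.
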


\begin{proof}
    Existence and uniqueness of a weak solution $N(G) \in H^1(\Omega)$ follows by Lax-Milgram's Theorem according to Assumptions \textbf{(N1)} and \textbf{(N2)}. Non-negativity of the solution follows by the weak maximum principle and Hopf's Theorem. H\"older regularity follows by classical Schauder theory.
\end{proof}

\begin{proposition}\label{proposition Lipschitz in time dependence of nutrients}
    For $R_G^4, R_y^5, R_f^4, R_g^4>0$ as in Corollary \ref{corollary existence and uniqueness of stationary solutions}, cf. also Lemma \ref{lemma self mapping}, $R_G^7>0$ as in Proposition~\ref{proposition deformation depends Lipschitz continuously on growth tensor}, $G \in B_{R_G^4}(\mathbf{1}_d)$, $f \in B_{R_f^4}(\id)$, $g \in B_{R_g^4}(0_d)$ and $y(G) \in B_{R_y^5}(\id)$ denoting the unique solution of \eqref{equation momentum balance on Banach space}, which exists according to Proposition \ref{corollary existence and uniqueness of stationary solutions}, we define $N(G)$ as the weak solution of the nutrient equation, cf.\ Proposition \ref{proposition existence of solution to nutrient equation}. There exist constants $M_N,L_N>0$ such that, for some $R_G^8 \in (0,R_G^7)$, the map $G \mapsto N(G)$ is continuously Fréchet differentiable as a mapping from $B_{R_G^8}(\mathbf{1}_d)$ to $C^{2,\mu}(\overline{\Omega})$ and 
    \begin{equation}\label{equation uniform boundedness nutrients}
        \sup_{G \in B_{R_G^8}(\mathbf{1}_d)} \norm{N(G)}_{C^{2,\mu}(\overline{\Omega})}\leq M_N
    \end{equation}
    and
    \begin{equation*}
        \sup_{G \in B_{R_G^8}(\mathbf{1}_d)}\norm{\frac{\partial}{\partial G}N(G)}_{\mathcal{L}(C^{1,\mu}(\overline{\Omega},\RR^{d \times d}),C^{2,\mu}(\overline{\Omega}))}\leq L_N.
    \end{equation*}
    In particular, the map $G \mapsto N(G)$ is Lipschitz continuous on $B_{R_G^8}(\mathbf{1}_d)$ with Lipschitz constant $L_N$, i.e., for all $G_1,G_2 \in B_{R_G^8}(\mathbf{1}_d)$
    \begin{equation}\label{equation Lipschitz continuity nutrients}
        \norm{N(G_1)-N(G_2)}_{C^{2,\mu}(\overline{\Omega})}\leq L_G\norm{G_1-G_2}_{C^{1,\mu}(\overline{\Omega},\RR^{d \times d})}.
    \end{equation}
\end{proposition}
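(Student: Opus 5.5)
Our approach is to apply the implicit function theorem on Banach spaces (Theorem 4.B in \cite{Zeidler.1986}), as in Proposition \ref{proposition deformation depends Lipschitz continuously on growth tensor}, this time to the nutrient problem, with the map $\mathfrak{y}$ from that proposition inserted into the coefficients. First we homogenize the Dirichlet datum. Using a continuous right-inverse $E_n$ of the trace on $\Gamma_D^n$, set $\tilde f_n := E_n f_n \in C^{2,\mu}(\overline{\Omega})$ and write $N(G) = w(G) + \tilde f_n$, where $w \in C^{2,\mu}_{0,\Gamma_D^n}(\overline{\Omega})$. We then define
\begin{equation*}
    \mathcal{N}: B_{R_G^7}(\mathbf{1}_d) \times C^{2,\mu}_{0,\Gamma_D^n}(\overline{\Omega}) \to C^{0,\mu}(\overline{\Omega}) \times C^{1,\mu}(\Gamma_N^n)
\end{equation*}
by
\begin{equation*}
    \mathcal{N}(G,w) := \begin{bmatrix} -\partial_i\bigl(D(G,\nabla\mathfrak{y}(G))_{ij}\partial_j (w+\tilde f_n)\bigr) + \beta(G,\nabla \mathfrak{y}(G))(w+\tilde f_n) \\ D(G,\nabla \mathfrak{y}(G))\nabla(w+\tilde f_n)\cdot\n - g_n\end{bmatrix}.
\end{equation*}
By Proposition \ref{proposition existence of solution to nutrient equation}, $\mathcal{N}(G,w(G))=0$ has a unique solution for every $G$.

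The second step is regularity of $\mathcal{N}$. By Proposition \ref{proposition deformation depends Lipschitz continuously on growth tensor}, $G \mapsto \nabla\mathfrak{y}(G)$ is $C^1$ into $C^{1,\mu}(\overline{\Omega},\RR^{d\times d})$. Since $\nabla \mathfrak{y}(G) \in B_{R_{\nabla y}^1}(\mathbf{1}_d)$ once $R_y^5$ is small (Corollary \ref{corollary existence and uniqueness of stationary solutions}), \textbf{(N1)} and the chain rule show that $G\mapsto D(G,\nabla\mathfrak{y}(G))$ is $C^1$ into $C^{1,\mu}$ and $G\mapsto \beta(G,\nabla\mathfrak{y}(G))$ is $C^1$ into $C^{0,\mu}$. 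Because $\mathcal{N}$ is affine in $w$, and multiplication and differentiation are bounded bilinear and linear maps between the relevant Hölder spaces, $\mathcal{N}$ is $C^1$ jointly.

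The third step is to verify that $D_w\mathcal{N}(G,w)$ is an isomorphism. This is the scalar linear mixed problem $w\mapsto(-\partial_i(D_{ij}\partial_j w)+\beta w,\ D\nabla w\cdot \n)$. Uniform ellipticity with constant $\nu$, $\beta\ge 0$, and \textbf{(N2)} (either $\Gamma_D^n\neq\emptyset$ or $\beta>0$ on a set of positive measure) give coercivity on $H^1_{0,\Gamma_D^n}$ via a Poincaré-type inequality, so Lax–Milgram applies. Schauder theory (the scalar analogue of Theorem \ref{Theorem existence, regularity and uniqueness linear elliptic systems}) then gives a bounded inverse. This step is the main obstacle, for two reasons. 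First, the constant in the Poincaré argument must be uniform in $G$ when $\Gamma_D^n=\emptyset$. For this we shrink the radius and use that $G\mapsto \beta(G,\nabla \mathfrak{y}(G))$ is continuous, so it stays close to $\beta(\mathbf{1}_d,\nabla\mathfrak{y}(\mathbf{1}_d))$. Second, the Schauder estimate for mixed boundary conditions needs $\Gamma_D^n$ and $\Gamma_N^n$ separated, which holds because they are disjoint closed sets. To get the inverse bounded uniformly in $G$, we first invert at $G=\mathbf{1}_d$. Continuity of $G\mapsto D_w\mathcal{N}(G,\cdot)$ in operator norm and Proposition \ref{Proposition Neumann series argument for uniform boundedness of inverse} then give a uniform bound $\norm{(D_w\mathcal{N})^{-1}}\le C$ on a small ball $B_{R_G^8}(\mathbf{1}_d)$.

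Finally, the implicit function theorem together with uniqueness from Proposition \ref{proposition existence of solution to nutrient equation} shows that $G\mapsto w(G)$, hence $G\mapsto N(G)$, is $C^1$ on $B_{R_G^8}(\mathbf{1}_d)$. Its derivative is
\begin{equation*}
    \partial_G N(G) = -(D_w\mathcal{N})^{-1}D_G\mathcal{N}.
\end{equation*}
After possibly shrinking $R_G^8$, continuity of $D_G\mathcal{N}$ bounds this uniformly by some $L_N$. The bound \eqref{equation uniform boundedness nutrients} follows from the Schauder estimate
\begin{equation*}
    \norm{w}_{C^{2,\mu}} \le C\,\norm{\mathcal{N}(G,0)}
\end{equation*}
with uniform $C$, combined with the uniform bounds on the coefficients. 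The Lipschitz estimate \eqref{equation Lipschitz continuity nutrients} then follows from the mean value theorem on the convex ball $B_{R_G^8}(\mathbf{1}_d)$.
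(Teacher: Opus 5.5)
Your proposal is correct and follows the paper's route. You homogenize the Dirichlet datum, apply the implicit function theorem to the nutrient operator with $\nabla\mathfrak{y}(G)$ inserted into the coefficients, bound the derivative by continuity near $\mathbf{1}_d$, and conclude with the mean value theorem. Beyond that, you supply details the paper dismisses with ``clearly'': Lax--Milgram plus Schauder theory for the isomorphism, and a Neumann-series argument around $G=\mathbf{1}_d$ for a uniform inverse bound, which incidentally makes your separate uniform-Poincar\'e step unnecessary.
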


\begin{proof}
    Analogously to the proof of Proposition \ref{proposition deformation depends Lipschitz continuously on growth tensor}, we apply the implicit function theorem. Let $\tilde{f}_n \in C^{2,\mu}(\overline{\Omega})$ with $\tilde{f}_n=f_n$ on $\Gamma_D^n$. Then, $N(G)$ is a solution of \eqref{equation nutrient equation on a Banach space} if and only if $N(G)^0:=N(G)-\tilde{f_n} \in C^{2,\mu}_{0,\Gamma_D^n}(\overline{\Omega})$ is a solution of
    \begin{equation*}
        \begin{cases}
            -\sum_{i,j=1}^d \partial_i(D(G,\nabla y(G))_{ij}\partial_j (N(G)^0 + \tilde{f_n})) + \beta(G,\nabla y)(N(G)^0 + \tilde{f_n})=0 & \textup{on } \Omega
            \\ N(G)^0=0 & \textup{on } \Gamma_D^n
            \\ D(G,\nabla y(G))\nabla (N(G)^0 + f_n)\cdot\n = g_n & \textup{on } \Gamma_N^n.
        \end{cases}
    \end{equation*}    
    We define $\mathcal{N}^0:B_{R_G}(\mathbf{1}_d) \times C^{2,\mu}(\overline{\Omega}) \to C^{0,\mu}(\overline{\Omega}) \times C^{1,\mu}(\Gamma_N^n)$ via
    \begin{equation*}
        (G,N^0) \mapsto \begin{bmatrix}
            -\sum_{i,j=1}^d \partial_i(D(G,\nabla y(G))_{ij}\partial_j (N^0 + \tilde{f_n})) + \beta(G,\nabla y)(N^0 + \tilde{f_n})
            \\  D(G,\nabla y(G))\nabla (N^0 + f_n)\cdot \n - g_n
        \end{bmatrix}
    \end{equation*}
    and apply the implicit function theorem, cf. Theorem 4.B in \cite{Zeidler.1986}. Clearly, for all $(G,N^0) \in B_{R_G^7}(\mathbf{1}_d) \times C^{2,\mu}_{0,\Gamma_D^n}(\overline{\Omega})$, $\frac{\partial}{\partial N} \mathcal{N}^0(G,N^0): C^{2,\mu}_{0,\Gamma_D^n}(\overline{\Omega}) \to C^{0,\mu}(\overline{\Omega}) \times C^{1,\mu}(\Gamma_N^n)$, given by
    \begin{equation*}
        \frac{\partial}{\partial N} \mathcal{N}^0(G,N^0)[\varphi]=\begin{bmatrix}
            -\sum_{i,j=1}^n \partial_i(D(G,\nabla y(G))_{ij}\partial_j \varphi ) + \beta(G,\nabla y)\varphi 
            \\  D(G,\nabla y(G))\nabla \varphi \cdot \n
        \end{bmatrix}
    \end{equation*}
    is a Banach space isomorphism. Therefore, the mapping $N^0:B_{R_G^7}(\mathbf{1}_d) \to C^{2,\mu}(\overline{\Omega})$ defined via $G \mapsto N^0(G)$ is of the class $C^1$. Moreover, by continuity, there exists $R_G^8 \in (0,R_G^7)$ such that
    \begin{equation*}\label{equation nutrients are in C1 in time}
        \sup_{G \in B_{R_G^8}(\mathbf{1}_d)}\norm{\frac{\partial}{\partial G}N(G)}_{\mathcal{L}(C^{1,\mu}(\overline{\Omega},\RR^{d \times d}),C^{2,\mu}(\overline{\Omega}))}\leq L_N.
    \end{equation*}
    Thus \eqref{equation uniform boundedness nutrients} follows and \eqref{equation Lipschitz continuity nutrients} holds due to the mean value theorem.
\end{proof}

\section{Local Existence of Solutions}

We are now in a position to prove Theorem \ref{theorem main theorem} via Picard-Lindel\"of's theorem, cf.\ Theorem \ref{Theorem Picard Lindelof Banach space}. 
\begin{proof}[Proof of Theorem \ref{theorem main theorem}]
    Let $R_G^8>0$ be as in Proposition \ref{proposition Lipschitz in time dependence of nutrients} and $R>0$ as in \textbf{(W1)}, cf. also $\textbf{(G1)}$, define $R_G^9:=\min \{R_G^8,R\}$, let $R_f:=R_f^4, R_g:=R_g^4>0$ as in Corollary \ref{corollary existence and uniqueness of stationary solutions}, let $f \in B_{R_f^4}(\id)$, $g \in B_{R_g^4}(0_d)$ and let $\hat{\mathcal{G}}: B_{R_G^9}(\mathbf{1}_d) \to C^{1,\mu}(\overline{\Omega},\RR^{d \times d})$ be defined via
    \begin{equation*}
        \hat{\mathcal{G}}(G):=\mathcal{G}(G, \nabla y (G), N(G),\cdot)
    \end{equation*}
    where $y(G) \in C^{2,\mu}(\overline{\Omega},\RR^d)$ denotes the unique solution of the quasi-static equilibrium equation, \eqref{equation momentum balance on Banach space}, which exists according to Corollary \ref{corollary existence and uniqueness of stationary solutions} and let $N(G) \in C^{2,\mu}(\overline{\Omega})$ denote the unique solution of the nutrient equation, \eqref{equation nutrient equation on a Banach space}, which exists according to Proposition \ref{proposition existence of solution to nutrient equation}. We show that for all $G_0 \in B_{R_G^9}(\mathbf{1}_d)$ there exists a time horizon $T>0$, such that the ordinary differential equation
    \begin{equation*}\label{ODE on Banach space}
        \begin{cases}
            \frac{d}{dt} G(t)=\hat{\mathcal{G}}(G(t)) \quad \textup{on } [0,T]
            \\ G(0)=G_0
        \end{cases}
    \end{equation*}
    has a solution $G \in C^1 ([0,T];B_{R_G^9}(\mathbf{1}_d))$. In particular, this implies that $y(t):=y(G(t)) \in C^{2,\mu}(\overline{\Omega},\RR^d)$ solves \eqref{equation for total deformation} and $N(t):=N(G(t)) \in C^{2,\mu}(\overline{\Omega})$ solves \eqref{equation nutrient equation}. Additionally, $y \in C^1([0,T];C^{2,\mu}(\overline{\Omega},\RR^d))$ and $N \in C^1([0,T];C^{2,\mu}(\overline{\Omega}))$. To apply Theorem \ref{Theorem Picard Lindelof Banach space} it suffices to show that there exist constants $K_0,M_0>0$ such that for all $G,G_1,G_2 \in B_{R_G^9}(\mathbf{1}_d)$
    \begin{equation*}
        \norm{\hat{\mathcal{G}}(G_1)-\hat{\mathcal{G}}(G_2)}_{C^{1,\mu}(\overline{\Omega},\RR^{d \times d})} \leq K_0 \norm{G_1-G_2}_{C^{1,\mu}(\overline{\Omega},\RR^{d \times d})}
    \end{equation*}
    and
    \begin{equation*}
        \norm{\hat{\mathcal{G}}(G)}_{C^{1,\mu}(\overline{\Omega},\RR^{d \times d})}\leq M_0.
    \end{equation*}
    This is a direct consequence of Assumption \textbf{(G1)} and Propositions \ref{proposition composition in Holder spaces}, \ref{proposition deformation depends Lipschitz continuously on growth tensor} and \ref{proposition Lipschitz in time dependence of nutrients} as for all $G_1,G_2 \in B_{R_G^9}(\mathbf{1}_d)$
    \begin{align*}
        & \norm{\hat{\mathcal{G}}(G_1)-\hat{\mathcal{G}}(G_2)}_{C^{1,\mu}(\overline{\Omega},\RR^{d \times d})} 
        \\ & \leq  C (\norm{G_1-G_2}_{C^{1,\mu}(\overline{\Omega},\RR^{d \times d})} + \norm{\nabla y(G_1)-\nabla y(G_2)}_{C^{1,\mu}(\overline{\Omega},\RR^{d \times d})}  +\norm{N(G_1)-N(G_2)}_{C^{1,\mu}(\overline{\Omega})})
        \\ & \leq  K_0 \norm{G_1-G_2}_{C^{1,\mu}(\overline{\Omega},\RR^{d \times d})}
    \end{align*}
    and for all $G \in B_{R_G^9}(\mathbf{1}_d)$
    \begin{equation*}
        \norm{\hat{\mathcal{G}}(G)}_{C^{1,\mu}(\overline{\Omega},\RR^{d \times d})} \leq C(\norm{G}_{C^{1,\mu}(\overline{\Omega},\RR^{d \times d})} + \norm{y(G)}_{C^{1,\mu}(\overline{\Omega},\RR^{d \times d})} + \norm{N(G)}_{C^{1,\mu}(\overline{\Omega},\RR^{d \times d})}) \leq M_0.
    \end{equation*}
    Theorem \ref{theorem main theorem} follows with $R_G:=R_G^9$ and Picard-Lindel\"of's theorem, Theorem \ref{Theorem Picard Lindelof Banach space}, applied to $\mathcal{\hat{G}}\vert_{B_{R_{G}/2}(G_0)}$.
\end{proof}

\begin{remark}
    Note that the $L^{\infty}$-norm of the growth tensor can blow up on a finite time interval due to the general structure of the ordinary differential equation \eqref{equation ODE}. This difficulty could be circumvented by imposing a specific structure on the ordinary differential equation, for example an exponential growth law as in \cite{Davoli.Nik.Stefanelli.2023}. If we choose, for $\mathcal{H} \in L^{\infty}(\GLp{d} \times \GLp{d} \times \RR_{\geq 0} \times \overline{\Omega})$, a growth law of the form
    \begin{equation*}
        \frac{d}{dt} G= G\mathcal{H}(G,\nabla y, N,\cdot),
    \end{equation*}
    this dampens the effect of the elastic stress by imposing a hard constraint on how much the stress and the nutrient concentration can impact the $L^{\infty}$-norm of the growth tensor and prevents a blow-up of the growth tensor. Moreover, this also leads to a uniform lower bound of the determinant of the growth tensor by Jacobi's rule, that is,
    \begin{equation*}
        \frac{d}{dt} \det(G)= \det(G) \tr(\mathcal{H}(G,\nabla y, N,\cdot))
    \end{equation*}
    and hence
    \begin{align*}
        \det(G(t)) = &\det(G_0)\exp\biggl(\int_0^t\tr(\mathcal{H}(G,\nabla y, N,\cdot)) ds\biggr) 
        \\ \geq & \det(G_0)\exp(-td\norm{\mathcal{H}}_{L^{\infty}(\GLp{d} \times \GLp{d} \times \RR_{\geq 0} \times \overline{\Omega})})>0 \quad \textup{on } \Omega.
    \end{align*}
    However, one of the main challenges in the large strain regime lies in the fact that a solution of \eqref{equation for total deformation}, even on a much larger space, is unknown to exist if the growth tensor forces the strains to be large. Furthermore, the linearization around a solution, if it exists, is not necessarily a Banach space isomorphism, which is the main structural barrier for the analysis.
\end{remark}

\section{Example}

Our analysis is complemented by an example, which satisfies Assumptions \textbf{(W1)}-\textbf{(W3)}, \textbf{(G1)} and \textbf{(N1)}-\textbf{(N2)}, for suitable time-dependent Dirichlet boundary conditions. An analogous example can be constructed for pure Neumann boundary conditions. Such examples could provide a benchmark for numerical schemes. 
\begin{example}\label{example}
    We define, for $p \geq 1$ and $F \in \GLp{d}$,
    \begin{equation*}
        W(F) := \textup{dist}(F,\SO{d})^2 + (\det(F)^p + \frac{1}{\det(F)^p}-2) 
    \end{equation*}
    and, for $f \in H^{\frac{1}{2}}(\partial \Omega,\RR^d)$, 
    \begin{equation*}
        \mathcal{A}_{f}:=\{y \in H^1(\Omega,\RR^d):\, y\vert_{\partial \Omega}=f, \, \det(\nabla y) > 0 \textup{ a.e. on } \Omega\}.
    \end{equation*}
    We choose $f$ such that $\mathcal{A}_f$ is non-empty. For example, we can take $f \in C^{2,\mu}(\partial \Omega, \RR^d)$ close to the identity, cf. the discussion in Remark \ref{remark shifted operator} regarding the positivity of the determinant. By the polar composition, we immediately see that $W \in C^{\infty}(\GLp{d},\RR_{\geq 0})$ as, for $F \in \GLp{d}$ and $R(F):=F(F^TF)^{-\frac{1}{2}}$, $\textup{dist}(F,\SO{d})^2=\norm{F-R(F)}_{\textup{Frob}}^2$. Moreover, for a given growth tensor $G \in C^{1,\mu}(\overline{\Omega},\RR^{d \times d})$, we define the energy $E_G:\mathcal{A}_{f} \to [0,+\infty)$ via
    \begin{equation*}
        E_G(y):=\int_{\Omega} W(\nabla y G^{-1})\det(G)dx.
    \end{equation*}
    If $G$ is compatible, that is, $G=\nabla g$ for some $g:\Omega \to \RR^d$, and moreover $g=f$ on $\partial \Omega$, $y=g$ is a minimizer of $E_G$ on $\mathcal{A}_f$ and therefore also a solution to the associated Euler-Lagrange equations
    \begin{equation*}
        -\Div(\det(G)D_pW(\nabla y G^{-1})G^{-T})=0.
    \end{equation*}
    We choose $G_0 \in C^{1,\mu}(\overline{\Omega},\RR^{d \times d})$ such that there exists $g_0:\overline{\Omega} \to \RR^d$ with $\nabla g_0=G_0$ with $g_0=\id$ on $\partial\Omega$. Moreover, we choose $\mathcal{G}:\GLp{d} \times \GLp{d} \times \RR_{\geq 0} \times \overline{\Omega} \to \RR^{d \times d}$ defined via
    \begin{equation*}
        \mathcal{G}(G,Y,N,x):=GY
    \end{equation*}
    which clearly satisfies Assumption \textbf{(G1)}. The solution $G \in C^1([0,T];C^{1,\mu}(\overline{\Omega},\RR^{d \times d}))$ to the ordinary differential equation is then given by $G(t,\cdot)=(1-t)^{-1}G_0$. Clearly, $G$ is also compatible and therefore, the total deformation is always stress free and given by the deformation due to pure growth. The total deformation $y \in C^1([0,T];C^{2,\mu}(\overline{\Omega},\RR^d))$ is therefore, for all $t \in [0,T]$, given by $y(t,\cdot)=g(t,\cdot)=(1-t)^{-1}g_0$. We obtain an example if we prescribe the time-dependent Dirichlet boundary data $f(t,\cdot):=(1-t)^{-1}g_0$ on $\partial\Omega$. Moreover, for some choices of $D$ and $\beta$, such as a constant diffusion and absorption rate, the solution of the nutrient equation can be computed explicitly in spherical domains.
\end{example}
In our example, we circumvent two difficulties of our problem. The equation governing the growth process does not depend on the nutrient concentration. Moreover, we assume that the growth tensor is compatible at all times. In our example, the deformation is stress-free at all times.

\vspace{\baselineskip}
\noindent
\textbf{Acknowledgement:}
  The second author was supported by the Graduiertenkolleg 2339 IntComSin of the Deutsche Forschungsgemeinschaft (DFG, German Research Foundation) – Project-ID 321821685.
\newline
\medskip

\noindent
\textbf{Declaration of interest:} The authors do not have any conflicts of interest to declare.


\end{document}